\DeclareMathSymbol{\subsetneqq}{\mathbin}{AMSb}{36}
\newcommand{\R}{\mathbb{R}}
\newcommand{\N}{\mathbb{N}}
\newcommand{\C}{\mathbb{C}}
\newcommand{\beq}{\begin{eqnarray}}
\newcommand{\eeq}{\end{eqnarray}}
\newcommand{\bq}{\begin{equation}}
\newcommand{\eq}{\end{equation}}
\newcommand{\beqn}{\begin{eqnarray*}}
\newcommand{\eeqn}{\end{eqnarray*}}
\newcommand{\bex}{\begin{exo}}
\newcommand{\eex}{\end{exo}}
\newcommand{\ben}{\begin{enumerate}}
\newcommand{\een}{\end{enumerate}}
\newtheorem{th1}{{\bf Theorem}}[section]
\newtheorem{thm}[th1]{{\bf Theorem}}
\newtheorem{lem}[th1]{{\bf Lemma}}
\newtheorem{prop}[th1]{{\bf Proposition}}
\newtheorem{cor}[th1]{{\bf Corollary}}
\newtheorem{rem}[th1]{\bf Remark}
\newtheorem{rems}[th1]{\bf Remarks}
\newtheorem{defi}[th1]{\bf Definition}
\author[T. Saanouni]{Tarek Saanouni}
\address{Department of Mathematics, College of Sciences and Arts of Uglat Asugour, Qassim University, Buraydah, Kingdom of Saudi Arabia.}
\email{\sl t.saanouni@qu.edu.sa}
\email{\sl Tarek.saanouni@ipeiem.rnu.tn}
\subjclass[2010]{35Q55}
\keywords{Fourth-order Schr\"odinger equation, decay, scattering.}
\title[BNLS]{Scattering for radial bi-harmonic Hartree equations}
\date{\today}
\begin{document}
\begin{abstract}
This note studies the asymptotic behavior of global solutions to the fourth-order generalized Hartree equation
$$i\dot u+\Delta^2 u\pm(I_\alpha*|u|^p)|u|^{p-2}u=0.$$
Indeed, for both attractive and repulsive sign, the scattering is obtained in the mass super-critical and energy sub-critical regimes, with radial setting.
\end{abstract}
\maketitle
%\tableofcontents
%\address{University Tunis El Manar, Faculty of Sciences of Tunis, Department of Mathematics.}%\tableofcontents
\vspace{ 1\baselineskip}
\renewcommand{\theequation}{\thesection.\arabic{equation}}
%\newpage
%%%%%%%%%%%%%%%%%%%%%%%%%%%%%%%%%%%%%%%%%%%%%%%%%%%%%%%%%%%%%%%%%%%%%%%%%%%%%%%%%%%%%%%%%
\section{Introduction}
%%%%%%%%%%%%%%%%%%%%%%%%%%%%%%%%%%%%%%%%%%%%%%%%%%%%%%%%%%%%%%%%%%%%%%%%%%%%%%%%%%%%%%%%%%%%
This note is concerned with the energy scattering theory of the Cauchy problem for the following Choquard equation
\begin{equation}
\left\{
\begin{array}{ll}
i\dot u+\Delta^2  u+\epsilon(I_\alpha*|u|^p)|u|^{p-2}u=0 ;\\
u(0,.)=u_0.
\label{S}
\end{array}
\right.
\end{equation}
Here and hereafter $u: \R\times\R^N \to \C$, for some $N\geq5$. The defocusing or focusing regime is determined with $\epsilon=\pm1$. The source term satisfies $p\geq2$. The Riesz-potential is defined on $\R^N$ by 
$$I_\alpha:x\to\frac{\Gamma(\frac{N-\alpha}2)}{\Gamma(\frac\alpha2)\pi^\frac{N}22^\alpha|x|^{N-\alpha}},\quad  0<\alpha<N.$$ 

The bi-harmonic Schr\"odinger problem was considered first in \cite{Karpman,Karpman 1} to take into account the role of small fourth-order dispersion terms in the propagation of
intense laser beams in a bulk medium with a Kerr non-linearity.\\

The equation \eqref{S} satisfies the scaling invariance
$$u_\lambda=\lambda^\frac{4+\alpha}{2(p-1)}u(\lambda^4.,\lambda .),\quad\lambda>0.$$
This gives the critical Sobolev index
$$s_c:=\frac N2-\frac{4+\alpha}{2(p-1)}.$$

In this note, one focus on the mass super-critical and energy sub-critical regimes $0<s_c<1$. This is equivalent to $1+\frac{\alpha+4}N<p<1+\frac{\alpha+4}{N-4}$.\\

To the author knowledge, there exist a few literature treating the fourth-order Hartree equation. Indeed, some local and global well-posedness results in $H^s$ for  the Cauchy problem associated to the fourth-order non-linear Schr\"odinger-Hartree equation with variable dispersion coefficients were obtained in \cite{cb}. Moreover, a sharp threshold of global well-psedness and scattering of energy solutions versus finite time blow-up dichotomy was given \cite{st0} in the mass-super-critical and energy sub-critical regimes. See also \cite{cd} for the stationary case.\\

It is the aim of this note, to investigate the asymptotic behavior of global solutions to the fourth-order generalized Hartree equation \eqref{S}. Indeed, in the attractive sign, by use of a Morawetz estimate and a decay result in the spirit of \cite{NV}, one obtains the scattering of global solutions in the energy space. In the repulsive sign, thanks to the small data theory, Morawetz estimate and a variational analysis, the scattering of global solutions is established.\\ 

 The rest of this paper is organized as follows. The second section contains the main results and some technical estimates. Section three is devoted to prove the scattering of global solutions in the defocusing sign. The last section is consecrated to establish the scattering of global solutions in the focusing regime.\\

Here and hereafter, $C$ denotes a constant which may vary from line to another. Denote the Lebesgue space $L^r:=L^r({\R^N})$ with the usual norm $\|\cdot\|_r:=\|\cdot\|_{L^r}$ and $\|\cdot\|:=\|\cdot\|_2$. The inhomogeneous Sobolev space $H^2:=H^2({\R^N})$ is endowed with the norm 
$$ \|\cdot\|_{H^2} := \Big(\|\cdot\|^2 + \|\Delta\cdot\|^2\Big)^\frac12.$$
%For simplicity, denote the usual Sobolev Space $W^{s,p}:=W^{s,p}({\R^N})$ and $H^s:=W^{s,2}$. 
Let us denote also $C_T(X):=C([0,T],X)$ and $X_{rd}$ the set of radial elements in $X$. Finally, for an eventual solution to \eqref{S}, $T^*>0$ denotes it's lifespan.% Finally, $x^\pm$ are two real numbers near to $x$ satisfying $x^+>x$ and $x^-<x$.
%$$H=\underbrace{H^2({\R^N})\times...\times H^2({\R^N})}_{m~fois}.$$
%To simplify the notation, we write $\Psi =(\psi_{1,0},...,\psi_{m,0})$ as the initial data and $ u  = (u_1,...,u_m)$ as the solution.

%%%%%%%%%%%%%%%%%%%%%%%%%%%%%%%%%%%%%%%%%%%%%%%%%%%%%%%%%%%%%%%%%%%%%%%%%%%%%%%%%%%%%%%%%%%%%%%%%%%%%%%%%%%%
\section{Background and main results}
%%%%%%%%%%%%%%%%%%%%%%%%%%%%%%%%%%%%%%%%%%%%%%%%%%%%%%%%%%%%%%%%%%%%%%%%%%%%%%%%%%%%%%%%%%%%%%%
This section contains the contribution of this paper and some standard estimates needed in the sequel.
%%%%%%%%%%%%%%%%%%%%%%%%%%%%%%%%%%%%%%%%%%%%%%%%%%%%%%%%%%%%%%%%%%%%%%%%%%%%%%%%%%%%%%%%%%%%%%%%%%%%%%%%%%%%
\subsection{Preliminary}
%%%%%%%%%%%%%%%%%%%%%%%%%%%%%%%%%%%%%%%%%%%%%%%%%%%%%%%%%%%%%%%%%%%%%%%%%%%%%%%%%%%%%%%%%%%%%%%
The mass-critical and energy-critical exponents for the Choquard problem \eqref{S} read respectively
$$p_*:=1+\frac{\alpha+4}N\quad\mbox{and}\quad p^*:=\left\{
\begin{array}{ll}
1+\frac{4+\alpha}{N-4},\quad\mbox{if}\quad N\geq5;\\
\infty,\quad\mbox{if}\quad  1\leq N\leq4.
\end{array}
\right.$$
The above fourth-order Schr\"odinger problem \eqref{S} has a local solution \cite{st0} in the energy space for the energy sub-critical regime $2\leq p<p^*$. Moreover, the solution satisfies the following conservation laws
\begin{gather*}
Mass:=M[u(t)]:=\int_{\R^N}|u(t,x)|^2dx = M[u_0];\\
Energy:=E[u(t)] :=\int_{\R^N}\Big(|\Delta u(t)|^2+\frac\epsilon p (I_\alpha *|u(t)|^p)|u(t)|^p\Big)dx= E[u_0]. 
\end{gather*}
\begin{rem}
Thanks to the inequality \eqref{ineq}, the energy is well-defined for $1+\frac\alpha N\leq p \leq p^*$. So, the condition $p\geq2$ which gives a restriction on the space dimension, seems to be technical.
\end{rem}
For $u\in H^2$ and $\epsilon=-1$, take the action, the constraint and two positive real numbers
\begin{gather*}
S[u]:=M[u]+E[u]=\|u\|_{H^2}^2-\frac1p\int_{\R^N}(I_\alpha*|u|^p)|u|^p\,dx;\\
K[u]:=\|\Delta u\|^2-\frac B{2p}\int_{\R^N}(I_\alpha*|u|^p)|u|^p\,dx;\\
B:=\frac{Np-N-\alpha}2\quad\mbox{and} \quad A:=2p-B.
%H[u]:=S[u]-K[u]=\|u\|^2+\frac{B-2}{2p}\int_{\R^N}(I_\alpha*|u|^p)|u|^p\,dx.
%\underline{\mu}:=\min\{2a+(N-2)c,2a+Nc\},\,\bar{\mu}:=\max\{2a+(N-2)c,2a+Nc\};\\
%\mathcal A:= \Big\{(a,c)\in \R_+^*\times\R \quad \mbox{s.\,t}\quad \underline{\mu}\geq0,\quad\bar\mu>0\quad \mbox{and}\quad 2a(p-1)+c(2b+\alpha)>0\Big\}.
\end{gather*}
\begin{defi}
Let us recall that a ground state of \eqref{S} is a solution to 
\begin{equation}\label{grnd}
\phi+\Delta^2\phi-(I_\alpha*|\phi|^p)|\phi|^{p-2}\phi=0,\quad0\neq\phi\in H^2,
\end{equation}
which minimizes the problem
%\begin{equation}\label{min}
$$m:=\inf_{0\neq u\in H^2}\Big\{S[u] \quad\mbox{s\,. t}\quad K[u]=0\Big\}.$$%\end{equation}
\end{defi}
In the focusing regime, one denotes, for $u\in H^2$ and $\phi$ a ground state solution to \eqref{grnd}, the scale invariant quantities
\begin{gather*}
\mathcal{ME} [u]:=\frac{E[u]^{s_c}M[u]^{2-s_c}}{E[\phi]^{s_c}M[\phi]^{2-s_c}};\\
{\mathcal M\mathcal G}[u]:=\frac{\|\Delta u\|^{s_c}\|u\|^{2-s_c}}{\|\Delta\phi\|^{s_c}\|\phi\|^{2-s_c}}.
\end{gather*}
There exist a sharp threshold of global existence versus finite time blow-up of solutions \cite{st0}.
\begin{prop}\label{Blow-up}
Let $N\geq2$, $0<\alpha<N<8+\alpha$, $0<s_c<2$, $\phi$ be a ground state solution to \eqref{grnd} and a maximal solution ${u}\in C_{T^*}(H^2_{rd})$ of \eqref{S}. Suppose that
\begin{equation}
\mathcal{ME}[u]<1.\label{ss}
\end{equation}
\begin{enumerate}
\item[1.]
Assume that $p<3$ and
%\begin{equation}\label{ss1}
$${\mathcal M\mathcal G}[u]>1.$$%\end{equation}
Then, ${u}$ blows-up in finite time, i.e, $0<T^*<\infty$ and 
$$\limsup_{t\to T^*}\|\Delta u(t)\|= +\infty;$$
\item[2.]
Assume that $E(u_0)\geq0$ and 
\begin{equation}
{\mathcal M\mathcal G}[u]<1.\label{ss2}\end{equation}
Then, $T^*=\infty$ and $u$ scatters. Precisely, there exists $\psi\in H^2$ such that 
$$\limsup_{t\to\infty}\|u(t)-e^{it\Delta^2}\psi\|_{H^2}=0.$$
\end{enumerate}
\end{prop}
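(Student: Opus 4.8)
The plan is to combine the variational structure of the ground state with a localized virial identity (for part~1) and a concentration--compactness/rigidity argument of Kenig--Merle type (for part~2). I would first fix the variational picture. The basic tool is a sharp Gagliardo--Nirenberg--Hartree bound
$$\int_{\R^N}(I_\alpha*|u|^p)|u|^p\,dx\ \le\ C_{GN}\,\|\Delta u\|^{B}\,\|u\|^{A},\qquad A=2p-B,$$
whose extremisers are, up to the symmetries of \eqref{S}, rescalings of the ground states solving \eqref{grnd}. From \eqref{grnd} and the associated Pohozaev identities one obtains $K[\phi]=0$, evaluates $m=S[\phi]$ and $C_{GN}$ in terms of $\|\Delta\phi\|$ and $\|\phi\|$, and derives a coupled inequality relating the two scale--invariant quantities $\mathcal{ME}[u]$ and $\mathcal{MG}[u]$. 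Imposing \eqref{ss} then splits $\{\mathcal{MG}\ne1\}$ into its two pieces and yields the trapping/coercivity estimates: on $\{\mathcal{MG}<1\}$ one has $K[u]\gtrsim\min\{\|\Delta u\|^2,\,1\}$, while on $\{\mathcal{MG}>1\}$ (this is where $p<3$ is used, so that $B$ sits correctly between $2$ and $2p$) one has $K[u]\le-\delta\|\Delta u\|^2$ with $\delta=\delta(\mathcal{ME}[u])>0$.

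Next, conservation of mass and energy together with continuity of $t\mapsto\mathcal{MG}[u(t)]$ give, by a standard bootstrap, that the sign of $\mathcal{MG}[u(t)]-1$ is preserved on the maximal interval. On the side $\mathcal{MG}<1$, the coercivity of $K$ and the relation $S[u]=\|u\|_{H^2}^2-\frac1p\int(I_\alpha*|u|^p)|u|^p\,dx$ bound $\sup_t\|u(t)\|_{H^2}$, so $T^*=\infty$ by the blow-up criterion. For part~1 one works on $\{\mathcal{MG}>1\}$ with $p<3$ and $N<8+\alpha$: choose a radial weight $\varphi$ with $\varphi(x)=|x|^2$ for $|x|\le R$ and $\varphi,\nabla\varphi,\dots$ bounded, and set $V_R(t)=\int_{\R^N}\varphi\,|u(t)|^2\,dx$. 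Computing $V_R''(t)$ for the bi-harmonic flow produces a positive multiple of $K[u(t)]$ plus remainder terms supported in $\{|x|\gtrsim R\}$; these are estimated with the radial Sobolev embedding and Young's inequality, and $p<3$ makes them strictly subcritical relative to $\|\Delta u(t)\|^2$, hence absorbed into $\delta\|\Delta u(t)\|^2$ once $R$ is large. One thus gets $V_R''(t)\le-c<0$; since $V_R\ge0$ this forces $T^*<\infty$, and then $\limsup_{t\to T^*}\|\Delta u(t)\|=+\infty$ follows from the blow-up criterion.

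For part~2 (scattering on $\{\mathcal{MG}<1\}$ with $E(u_0)\ge0$) the plan is the usual induction-on-$\mathcal{ME}$ scheme. Small--data theory gives scattering when $\mathcal{ME}[u]$ is small; assuming the assertion false, let $\mathcal{ME}_c<1$ be the critical level, and use a linear profile decomposition adapted to the propagator $e^{it\Delta^2}$, the stability theory for \eqref{S}, and the trapping above to build a minimal non-scattering solution whose orbit is precompact in $H^2_{rd}$. A localized virial/Morawetz identity together with $K[u(t)]\gtrsim\|\Delta u(t)\|^2$ then forces $\|\Delta u(t)\|\equiv0$, contradicting minimality; hence every such solution is global and scatters, with $\psi=\lim_{t\to\infty}e^{-it\Delta^2}u(t)$ in $H^2$.

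The main obstacle is the rigidity step in part~2: for the \emph{fourth-order}, \emph{nonlocal} equation the localized virial identity carries substantially more error terms than in the classical second--order NLS case, and turning precompactness of the orbit into an actual contradiction --- rather than merely decay of a Strichartz/Morawetz norm --- is delicate and relies crucially on the radial reduction and on the range $0<s_c<2$. A secondary difficulty is that the profile decomposition and the nonlinear profile analysis for $e^{it\Delta^2}$ with the Hartree nonlinearity are heavier than their Laplacian/local counterparts.
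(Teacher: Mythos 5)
Your outline is sound in broad strokes, but you should first be aware that the paper does not prove this proposition at all: it is quoted as background from \cite{st0}. For the scattering half (part 2), your route --- linear profile decomposition adapted to $e^{it\Delta^2}$, extraction of a minimal non-scattering solution with precompact orbit, and a rigidity step via a localized virial --- is exactly the Kenig--Merle concentration-compactness scheme used in \cite{st0}. The whole point of the present paper (Theorem \ref{sctr2} and Section 4) is to reprove that scattering statement by the Dodson--Murphy argument instead, which eliminates precisely the two steps you single out as the main obstacles. That proof runs: a small-data scattering criterion in $L^a_tL^r_x$ (Lemma \ref{sdt}); the same variational trapping you describe, upgraded to a coercivity estimate $\|\Delta(\psi_R u)\|^2-\frac{B}{2p}\int(I_\alpha*|\psi_Ru|^p)|\psi_Ru|^p\,dx\geq\delta'\|\psi_Ru\|^2_{2Np/(N+\alpha)}$ on large balls (Lemma \ref{crcv}); a truncated Morawetz inequality giving $\int_0^T\|u(t)\|^2_{L^{2Np/(N+\alpha)}}\,dt\lesssim T^{1/3}$, hence energy evacuation along a sequence $t_n\to\infty$ (Lemma \ref{evac}); and finally smallness of $\|e^{i(\cdot-T)\Delta^2}u(T)\|_{L^a((T,\infty),L^r)}$ (Proposition \ref{fn}), which triggers the small-data theory. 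Your approach buys generality (no additional dimensional restriction, and it is the one actually carried out in the cited source); the paper's approach buys the removal of the profile decomposition, the nonlinear stability theory, and the compactness-based rigidity, at the price of radial symmetry and the technical condition $N>\frac{24+\alpha}{5}$ needed for the dispersive decay controlling the $F_1$ term. Your part 1 (localized virial with $K[u]\le-\delta\|\Delta u\|^2$ on the set $\mathcal{M}\mathcal{G}[u]>1$, remainder terms absorbed via radial Sobolev embedding using $p<3$) is the standard treatment; the paper only cites it and explicitly flags the restriction $p<3$ as leaving that half a partial result, so nothing in the present text can be used to check your error estimates there.
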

\begin{rems}
\begin{enumerate}
\item[1.]
The finite time blow-up part seems to be a partial result because of the restriction $p<3$;
\item[2.]
the previous result is inspired by the works in the NLS case \cite{km,Holmer}.
\end{enumerate}
\end{rems}
Let us close this sub-section with a sharp Gagliardo-Nirenberg inequality \cite{st0} related to the Choquard problem \eqref{S}. 
\begin{prop}\label{gag}
Let $0<\alpha<N\geq1$ and $1+\frac\alpha N<q p< p^*$. Then, 
\begin{enumerate}
\item[1.]
there exists a positive constant $C(N,p,\alpha)$, such that for any $u\in H^2$,
\begin{equation}\label{ineq}
\int_{\R^N}(I_\alpha*|u|^p)|u|^p\,dx\leq C(N,p,\alpha)\|u\|^A\|\Delta u\|^B;
\end{equation}
%Moreover, if $1+\frac\alpha N<p<p^*$, then
\item[2.]
the minimization problem
$$\frac1{C(N,p,\alpha)}=\inf\Big\{J(u):=\frac{\|u\|^A\|\Delta u\|^B}{\int_{\R^N}(I_\alpha*|u|^p)|u|^p\,dx},\quad0\neq u\in H^2\Big\}$$
is attained in some $Q\in H^2$ satisfying ${C(N,p,\alpha)}=\int_{\R^N}(I_\alpha*|Q|^{p})|Q|^p\,dx$ and
%\begin{equation}\label{euler}
$$B\Delta^2Q+AQ-\frac{2p}{C(N,p,\alpha)}(I_\alpha*|Q|^p)|Q|^{p-2}Q=0;$$%\end{equation}
\item[3.]
furthermore
%\begin{equation}\label{part3}
$$C(N,p,\alpha)=\frac{2p}{A}(\frac AB)^{\frac{B}2}\|\phi\|^{-2(p-1)},$$%\end{equation}
where $\phi$ is a ground state solution to \eqref{grnd}.
\end{enumerate}
\end{prop}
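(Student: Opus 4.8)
The plan is to establish the sharp Gagliardo--Nirenberg inequality \eqref{ineq} together with the identification of its optimal constant via a compactness/concentration argument on the minimization problem, then extract the Euler--Lagrange equation, and finally relate the optimizer to the ground state $\phi$ by a scaling normalization. I would proceed as follows.

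\textbf{Step 1: Existence of a minimizer.} First I would fix the homogeneities: a direct dimensional count shows that $A+B = 2p$ and that $A = 2p - \frac{Np-N-\alpha}{2} > 0$ precisely in the range $1+\frac{\alpha}{N} < p < p^*$, so $J$ is scale-invariant under $u \mapsto \mu u(\lambda \cdot)$ for the two-parameter family, and both norms in the numerator appear with positive powers. By the Hardy--Littlewood--Sobolev inequality combined with the Sobolev embedding $H^2(\R^N) \hookrightarrow L^r$ for the relevant $r$, the denominator $\int (I_\alpha * |u|^p)|u|^p\,dx$ is finite and controlled by $\|u\|^A\|\Delta u\|^B$ up to a constant, so $1/C(N,p,\alpha) := \inf J > 0$. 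Taking a minimizing sequence $(u_n)$, I would use the scaling invariance to normalize $\|u_n\| = \|\Delta u_n\| = 1$, so that $(u_n)$ is bounded in $H^2$; then apply a profile decomposition (or, in the radial subcase, the compact embedding $H^2_{rd} \hookrightarrow L^r$ for non-endpoint $r$) to produce a nonzero weak limit $Q$. Weak lower semicontinuity of the $H^2$-norms together with the (non-)vanishing of the nonlocal term on the profile forces $Q$ to be an actual minimizer, after rescaling so that the normalization $C(N,p,\alpha) = \int (I_\alpha*|Q|^p)|Q|^p\,dx$ holds.

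\textbf{Step 2: Euler--Lagrange equation.} With $Q$ a minimizer, I would compute $\frac{d}{d\epsilon}\big|_{\epsilon=0} J(Q + \epsilon h) = 0$ for all $h \in H^2$. Differentiating the quotient and using the normalization $\int(I_\alpha*|Q|^p)|Q|^p = C(N,p,\alpha)$, $\|Q\|\,,\|\Delta Q\|$ fixed, the critical-point condition becomes, after clearing denominators and collecting terms,
\begin{equation*}
B\,\Delta^2 Q + A\,Q - \frac{2p}{C(N,p,\alpha)}\,(I_\alpha*|Q|^p)|Q|^{p-2}Q = 0,
\end{equation*}
where the coefficients $A$ and $B$ are exactly the exponents attached to $\|u\|$ and $\|\Delta u\|$ in $J$; this is the content of part (2). (One must also check $Q$ may be taken real and, by symmetrization of the Riesz kernel, radially symmetric and decreasing, though this is not needed for the stated conclusion.)

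\textbf{Step 3: Relation to the ground state.} For part (3), I would start from the known variational characterization of the ground state $\phi$ solving \eqref{grnd} and minimizing $S$ under $K = 0$, and rescale $Q$ to match $\phi$'s equation: setting $\tilde Q(x) = \mu Q(\lambda x)$ and choosing $\lambda, \mu$ so that the coefficients in the Euler--Lagrange equation of $Q$ are normalized to those in \eqref{grnd} gives $\lambda^4 = A/B$ and an explicit $\mu$ in terms of $C(N,p,\alpha)$. Plugging this $\tilde Q$ as a trial function and using the Pohozaev/virial identities associated to \eqref{grnd} (which pin down $\|\Delta\phi\|^2$ and $\int(I_\alpha*|\phi|^p)|\phi|^p$ in terms of $\|\phi\|^2$ via $B$), one solves for $C(N,p,\alpha)$ and obtains
\begin{equation*}
C(N,p,\alpha) = \frac{2p}{A}\Big(\frac{A}{B}\Big)^{\frac{B}{2}}\|\phi\|^{-2(p-1)}.
\end{equation*}

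The main obstacle I anticipate is Step 1, specifically ruling out vanishing and dichotomy for the minimizing sequence: the nonlocal Hartree term is translation-invariant, so without the radial restriction one genuinely needs the concentration-compactness lemma (or profile decomposition adapted to the bi-harmonic propagator), and one must verify that the nonlocal functional is ``subadditive'' in the strict sense that splitting mass strictly decreases it, which uses the positivity of $I_\alpha$ and the strict convexity-type inequality for the HLS functional. Everything else --- the Euler--Lagrange computation and the scaling bookkeeping in Step 3 --- is routine once the minimizer is in hand; since this proposition is quoted from \cite{st0}, I would in practice cite it, but the above is the self-contained route.
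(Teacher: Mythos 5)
The paper offers no proof of this proposition: it is imported verbatim from \cite{st0} (``Let us close this sub-section with a sharp Gagliardo--Nirenberg inequality \cite{st0}\dots''), so there is no in-paper argument to compare yours against. Your sketch is the standard Weinstein-type route and is essentially sound: the exponent bookkeeping ($A+B=2p$, $B=\frac{Np-N-\alpha}{2}$, positivity of $A,B$ exactly for $1+\frac\alpha N<p<p^*$), the Hardy--Littlewood--Sobolev plus Gagliardo--Nirenberg bound $\int(I_\alpha*|u|^p)|u|^p\lesssim\|u\|_{2Np/(N+\alpha)}^{2p}\lesssim\|u\|^A\|\Delta u\|^B$, the normalization $\|Q\|=\|\Delta Q\|=1$ making the Euler--Lagrange equation come out exactly as stated, and the rescaling to \eqref{grnd} via the Pohozaev identities are all correct in outline. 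Two caveats. First, the entire mathematical content is concentrated in the compactness step you defer: for a bi-harmonic functional, Schwarz symmetrization does \emph{not} decrease $\|\Delta u\|_{L^2}$, so your parenthetical ``by symmetrization \dots radially symmetric and decreasing'' is not available here; one genuinely needs a profile decomposition or concentration-compactness adapted to $\dot H^2$, as you correctly identify as the main obstacle (and correctly note the symmetrization is not needed for the stated conclusions). Second, a minor bookkeeping slip: with your convention $\tilde Q(x)=\mu Q(\lambda x)$ the matching condition is $B\lambda^{-4}=A$, i.e.\ $\lambda^4=B/A$ rather than $A/B$; this does not affect the final formula for $C(N,p,\alpha)$. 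Citing \cite{st0}, as the paper does and as you suggest, is the appropriate course.
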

%%%%%%%%%%%%%%%%%%%%%%%%%%%%%%%%%%%%%%%%%%%%%%%%%%%%%%%%%%%%%%%%%%%%%%%%%%%%%%%%%%%%%%%%%%%%%%%%%%%%%%%%%%%%
\subsection{Main results}
%%%%%%%%%%%%%%%%%%%%%%%%%%%%%%%%%%%%%%%%%%%%%%%%%%%%%%%%%%%%%%%%%%%%%%%%%%%%%%%%%%%%%%%%%%%%%%%
This sub-section contains the contribution of this note. The first main goal is to prove the following scattering result in the defocusing radial regime.
\begin{thm}\label{sctr}
Let $N\geq5$, $0<\alpha<N<8+\alpha$ and $p_*< p<p^*$ such that $p\geq2$. Take $\epsilon=1$ and $u\in C(\R,H^2_{rd})$ be a global solution to \eqref{S}. Then, there exists $u_\pm\in H^2$ such that
$$\lim_{t\to\pm\infty}\|u(t)-e^{it\Delta^2}u_\pm\|_{H^2}=0.$$
\end{thm}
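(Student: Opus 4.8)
The plan is to run the standard concentration–compactness / rigidity scheme, adapted to the bi-harmonic Hartree flow in the defocusing radial regime $0<s_c<1$. First I would set up the correct functional framework: Strichartz estimates for the free propagator $e^{it\Delta^2}$ (both the full and the radial-improved versions, together with the dispersive decay $\|e^{it\Delta^2}\|_{L^1\to L^\infty}\lesssim |t|^{-N/4}$), the small-data local theory in the critical space $\dot H^{s_c}$ and in the energy space $H^2$, a stability/perturbation lemma, and the nonlinear estimate controlling $(I_\alpha*|u|^p)|u|^{p-2}u$ in the relevant dual Strichartz norm (this uses the Hardy–Littlewood–Sobolev inequality together with the fractional chain rule; the range $p_*<p<p^*$, $p\ge 2$, is exactly what makes the exponents admissible). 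The goal is to show that the global solution $u$ has finite scattering norm, say $\|u\|_{L^q_t W^{s_c,r}_x(\R\times\R^N)}<\infty$ for a suitable admissible pair $(q,r)$; standard arguments then upgrade this to convergence of $u(t)$ to $e^{it\Delta^2}u_\pm$ in $H^2$ via persistence of regularity and the conservation of mass and energy.

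Next I would argue by contradiction: if scattering fails, a profile decomposition for bounded sequences in $H^2$ adapted to the group $e^{it\Delta^2}$ (with the symmetries being space translations, time translations, and dilations — radiality kills the translation parameter, which is why the radial hypothesis is imposed) together with the perturbation lemma yields a critical element, i.e. a nonzero global solution $u_c\in C(\R,H^2_{rd})$ with minimal $\mathcal{ME}$ (or minimal energy, since mass and energy are both positive and coercive in the defocusing case) whose orbit $\{u_c(t):t\in\R\}$ is precompact in $H^2_{rd}$. The coercivity here is immediate because for $\epsilon=1$ the energy controls $\|\Delta u\|^2$ directly, so there is no variational obstruction and the whole sub-threshold bookkeeping of Proposition~\ref{Blow-up} is unnecessary; the subtlety is rather that the minimal energy could a priori be zero, which is ruled out by the small-data theory.

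Finally I would preclude the critical element by a virial/Morawetz argument. Using a truncated (radial) Morawetz/interaction-Morawetz estimate with weight $a(x)\sim|x|$ for $|x|$ bounded — in the spirit of \cite{NV} — one obtains, after integrating in time over $[0,T]$ and using the precompactness of the orbit to handle the boundary and error terms, a bound of the form $\int_0^T\!\!\int (I_\alpha*|u_c|^p)|u_c|^p\,dx\,dt\lesssim T^{\theta}$ with $\theta<1$ (equivalently, the time-averaged potential energy tends to $0$); combined with the decay result quoted from \cite{NV} this forces $u_c\equiv 0$, a contradiction. The main obstacle I expect is precisely this last step: making the Morawetz estimate quantitative enough in the fourth-order setting — the bi-Laplacian produces lower-order commutator terms with the weight that are not sign-definite and must be absorbed using the radial Sobolev embedding and the compactness of the orbit — and then matching it with the Hartree nonlinearity's nonlocal structure (the term $\nabla I_\alpha * |u|^p$ must be estimated via Hardy–Littlewood–Sobolev without losing the smallness). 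Secondary technical points are the fractional chain/product rule for the nonlocal nonlinearity when $p$ is not an even integer, and ensuring the admissible Strichartz exponents stay in range throughout $p_*<p<p^*$ with $N\ge 5$.
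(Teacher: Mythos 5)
Your outline is a legitimate route in principle, but it is genuinely different from --- and considerably heavier than --- what the paper does, and one of its ingredients is miscalibrated. The paper never performs a concentration--compactness/rigidity argument in the defocusing case: it proves the Morawetz bound of Proposition~\ref{cr} directly for the given global solution $u$, upgrades it to the $L^r$ decay $\|u(t)\|_r\to 0$ for $2<r<\tfrac{2N}{N-4}$ (Proposition~\ref{dcy}, a Visciglia-style argument using a localized Gagliardo--Nirenberg inequality and the quasi-continuity of local mass from Lemma~\ref{dcyy2}), and then closes with the nonlinear estimate of Lemma~\ref{twl2} together with the absorption Lemma~\ref{abs} to get $\langle u\rangle\in S(\R)$ and hence the Cauchy criterion for $e^{-it\Delta^2}u(t)$ in $H^2$. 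In other words, the decay of \cite{NV} is applied to $u$ itself, not to a critical element, so no profile decomposition, no stability lemma, and no minimal counterexample are needed; what this buys is a short proof that only uses tools already required for the local theory. Your scheme would additionally require a linear profile decomposition for $e^{it\Delta^2}$ in $H^2_{rd}$ and a long-time perturbation theory for the Hartree nonlinearity, neither of which is free. One concrete inaccuracy in your setup: for this inter-critical problem ($0<s_c<2$) the scaling $u_\lambda$ does not preserve the $H^2$ norm, so dilations are not an admissible symmetry group for a profile decomposition of bounded $H^2$ sequences; the scales are precluded by subcriticality just as the translations are precluded by radiality, and listing them as surviving parameters would derail the extraction of the critical element. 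Your rigidity step (precompact orbit plus the weighted Morawetz bound forces $u_c\equiv 0$) is sound, but if you intend to quote the decay of \cite{NV} there you should realize that, once you have that decay for the original solution, the detour through a critical element is unnecessary --- which is precisely the shortcut the paper takes.
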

In order to prove the scattering, one needs a decay of global solutions to the Choquard equation \eqref{S}.
\begin{prop}\label{dcy}
Let $N\geq5$, $0<\alpha<N<8+\alpha$ and $p_*< p<p^*$ such that $p\geq2$. Take $\epsilon=1$ and $u\in C(\R,H^2_{rd})$ be a global solution to \eqref{S}. Then,
$$\lim_{t\to\pm\infty}\|u(t)\|_r=0,\quad\mbox{for all}\quad 2<r<\frac{2N}{N-4}.$$
\end{prop}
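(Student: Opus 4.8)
The plan is to establish the decay $\|u(t)\|_r\to 0$ for $2<r<\frac{2N}{N-4}$ by first obtaining a global Morawetz-type spacetime bound and then upgrading it to pointwise-in-time decay via an interpolation/averaging argument in the spirit of Nakanishi--Visciglia \cite{NV}. First I would record that, in the defocusing case $\epsilon=1$, the conserved energy controls $\|\Delta u(t)\|$ uniformly in $t$, so that $\sup_t\|u(t)\|_{H^2}<\infty$; this is the a priori bound that all subsequent estimates rely on. Next I would invoke the virial/Morawetz identity associated to \eqref{S}: testing the equation against a suitable multiplier of the form $\nabla\cdot(a'\, \cdot)$ with $a(x)\sim|x|$ truncated at large radii, one obtains, after integrating in time and using the uniform $H^2$ bound to absorb boundary terms, a global estimate of the shape
$$\int_{\R}\int_{\R^N}\frac{(I_\alpha*|u|^p)|u|^p}{|x|}\,dx\,dt\leq C\sup_t\|u(t)\|_{H^2}^2<\infty,$$
possibly with additional nonnegative quartic-derivative terms on the left. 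The radial assumption is what makes the fourth-order Morawetz computation tractable here, since it lets one reduce the weight manipulations essentially to one dimension.

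From the finiteness of this spacetime integral one extracts a sequence of times $t_n\to\pm\infty$ along which the localized nonlinear quantity tends to zero; combined with the Riesz-potential estimate \eqref{ineq} and Hölder on balls, this forces $\|u(t_n)\|_{L^q(|x|\leq R)}\to 0$ for a fixed Lebesgue exponent $q$ and every fixed $R$. The radial Strauss-type pointwise bound $|u(x)|\lesssim |x|^{-(N-1)/2}\|u\|^{1/2}\|\nabla u\|^{1/2}$ (or its $H^2$ analog) handles the exterior region $|x|\geq R$ uniformly in time, so one upgrades this to $\|u(t_n)\|_q\to 0$ on all of $\R^N$. Interpolating between this $L^q$ decay and the uniform bounds $\|u(t)\|\lesssim 1$, $\|u(t)\|_{L^{2N/(N-4)}}\lesssim 1$ (Sobolev) gives $\|u(t_n)\|_r\to 0$ for every $2<r<\frac{2N}{N-4}$ along the sequence. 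The final step is to remove the subsequence: one shows $t\mapsto\|u(t)\|_q$ has no oscillation surviving in the limit, either by a direct argument that the full limit exists — differentiating $\int|u|^2\chi_R\,dx$ in $t$ and controlling the flux by the same Morawetz-integrable quantity, so that the exterior mass is eventually small for \emph{all} large $t$, not just along $t_n$ — or by combining the small-data scattering theory with a concentration-compactness exclusion of a nonzero limiting profile.

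The main obstacle I anticipate is the Morawetz estimate itself: unlike the Laplacian case, the bi-harmonic operator $\Delta^2$ produces a virial computation with many more terms, several of which are not obviously signed, and controlling the ``bad'' terms requires either choosing the weight $a$ very carefully or exploiting radiality to integrate by parts in the radial variable and see cancellations. A secondary subtlety is that the Riesz potential is nonlocal, so when one localizes in $x$ the convolution $I_\alpha*|u|^p$ does not localize; handling the tails of this convolution — splitting $|u|^p$ into its contributions from $|y|\leq R$ and $|y|>R$ and using the decay of $I_\alpha$ together with the uniform $H^2$ bound — is where most of the routine but delicate work will lie. Once the spacetime bound and the radial exterior decay are in hand, the interpolation and the passage to the full-time limit are comparatively standard.
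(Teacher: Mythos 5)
Your overall architecture (the Morawetz spacetime bound of Proposition \ref{cr}, plus a Visciglia-type upgrade to pointwise-in-time decay, with the radial Strauss estimate handling the exterior region) matches the paper's. The genuine gap is in your final step, the passage from decay along a subsequence $t_n$ to decay for \emph{all} $t\to\pm\infty$. Your first proposed mechanism --- ``controlling the flux by the same Morawetz-integrable quantity'' --- does not work as stated: for $i\dot u+\Delta^2u+\mathcal N=0$ the time derivative of the localized mass $\int\chi_R|u|^2\,dx$ equals $2\Im\int(\Delta\chi_R\,\bar u+2\nabla\chi_R\cdot\nabla\bar u)\Delta u\,dx$ (the nonlinear contribution is real and drops out), which is bounded by $O(R^{-1})$ via the conserved $H^2$ norm but is \emph{not} dominated by the Morawetz density $|x|^{-1}(I_\alpha*|u|^p)|u|^p$; in particular it is not time-integrable, so you cannot conclude that the interior mass is small for all large $t$ just because it is small at the times $t_n$. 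Your second proposed mechanism (concentration-compactness plus small-data scattering) is a much heavier machine and risks circularity here, since in the defocusing case the scattering Theorem \ref{sctr} is itself deduced from this decay proposition.

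The paper closes exactly this step by arguing by contradiction rather than by extracting a good subsequence: if decay fails, there is a sequence $t_n\to\infty$ with $\|u(t_n)\|_{2+4/N}\geq\varepsilon$; a \emph{localized} Gagliardo--Nirenberg inequality converts this into mass concentration $\|u(t_n)\|_{L^2(Q_1(x_n))}\geq\varepsilon$ on unit cubes, radiality forces the centers $x_n$ to stay bounded, and a weak-continuity lemma (Lemma \ref{dcyy2}: solutions with weakly convergent data remain $L^2_{loc}$-close on a uniform time interval $T$) propagates the concentration to all of $[t_n,t_n+T]$ with $T$ independent of $n$. Summing the resulting lower bounds $\gtrsim\varepsilon^{2p}T$ over $n$ then contradicts the finiteness of the Morawetz integral. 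Your slow-variation-of-local-mass observation, once corrected (the Lipschitz constant is $O(R^{-1})$ in time, coming from the uniform $H^2$ bound, not from the Morawetz density), could serve as an elementary substitute for Lemma \ref{dcyy2} in this contradiction scheme --- indeed the paper uses precisely that estimate in Proposition \ref{fn} --- but as written your proposal uses it in the wrong direction and does not supply a working propagation mechanism.
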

The following Morawetz estimate stand for a standard tool to prove the previous decay result.
\begin{prop}\label{cr}
Let $N\geq5$, $0<\alpha<N<8+\alpha$ and $p_*< p<p^*$ such that $p\geq2$. Take $\epsilon=1$ and $u\in C(\R,H^2_{rd})$ be a global solution to \eqref{S}. Then,
%\begin{align}
$$\int_\R\int_{\R^N}|x|^{-1}(I_\alpha*|u(t)|^{p})|u(t,x)|^p\,dx\,dt\lesssim \|u_0\|_{H^2}.$$
%\int_\R\int_{\R^N}|x|^{2b-1}[I_\alpha*|u|^p]|u|^{p}\,dx\,dt\lesssim \|u_0\|_{H^2}.\label{mrw}\end{align}
\end{prop}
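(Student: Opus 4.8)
The plan is to run the standard interaction/virial Morawetz machinery adapted to the bi-harmonic operator. Introduce the Morawetz action functional built from the weight $a(x)=|x|$ (or a smoothed version $a_R(x)$ that agrees with $|x|$ on $\{|x|\le R\}$ and is bounded, with the limit $R\to\infty$ taken at the end), namely
\begin{equation}
M_a(t):=2\,\mathrm{Im}\int_{\R^N}\bar u(t,x)\,\nabla a(x)\cdot\nabla u(t,x)\,dx.
\end{equation}
Since $a$ is bounded together with its derivatives (after truncation) and $u\in C(\R,H^2_{rd})\hookrightarrow C(\R,H^1)$, we have $|M_a(t)|\lesssim \|\nabla a\|_\infty\|u(t)\|\,\|\nabla u(t)\|\lesssim \|u_0\|_{H^2}^2$ uniformly in $t$, by conservation of mass and energy (the energy controls $\|\Delta u\|$, hence $\|\nabla u\|$ by interpolation). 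This uniform bound is what will ultimately cap the time integral.

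Next I would differentiate $M_a(t)$ along the flow of \eqref{S}. Using the equation $i\dot u=-\Delta^2 u-(I_\alpha*|u|^p)|u|^{p-2}u$ and integrating by parts, $\frac{d}{dt}M_a(t)$ splits into a purely dispersive (bi-Laplacian) contribution and a nonlinear contribution. For the bi-harmonic part one obtains, after several integrations by parts, a quadratic form in the derivatives of $u$ with coefficients given by derivatives of $a$; the key structural fact is that for $a(x)=|x|$ in the radial setting this quadratic form is nonnegative (it produces, up to positive constants, terms like $|x|^{-1}|\nabla u|^2$, $|x|^{-3}|u|^2$ and a positive multiple of $|\Delta u|^2$-type localized mass) — this is the fourth-order analogue of the classical $\Delta(-\Delta |x|)=$ positive-distribution computation, and radiality is used precisely to discard the angular terms that are not sign-definite. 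For the nonlinear part, the Hartree term contributes, after integration by parts in $x$, a multiple of
\begin{equation}
\Big(1-\frac1p\Big)\int_{\R^N}\Delta a(x)\,(I_\alpha*|u|^p)|u|^p\,dx \;+\;\text{(Riesz-potential commutator term)},
\end{equation}
and on $\{|x|\le R\}$, where $\Delta a=\Delta|x|=\frac{N-1}{|x|}$, the first term is exactly a positive constant times $\int |x|^{-1}(I_\alpha*|u|^p)|u|^p\,dx$. The commutator term coming from $\nabla a\cdot$ hitting the convolution is handled as in \cite{NV}: for the attractive/defocusing sign $\epsilon=1$ it has a favorable sign (or is absorbed), using that $\nabla I_\alpha$ has a definite sign structure against the radial weight. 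Thus $\frac{d}{dt}M_a(t)\gtrsim \int_{\{|x|\le R\}}|x|^{-1}(I_\alpha*|u|^p)|u|^p\,dx$ plus nonnegative terms.

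Finally, integrate this differential inequality over $[-T,T]$ and use the uniform bound on $M_a$:
\begin{equation}
\int_{-T}^{T}\int_{\{|x|\le R\}}|x|^{-1}(I_\alpha*|u|^p)|u|^p\,dx\,dt\;\lesssim\; |M_a(T)|+|M_a(-T)|\;\lesssim\;\|u_0\|_{H^2}^2,
\end{equation}
uniformly in $T$ and $R$; then let $R\to\infty$ and $T\to\infty$ by monotone convergence to conclude. The main obstacle I anticipate is the bi-harmonic Morawetz computation itself: establishing that the fourth-order quadratic form generated by $\frac{d}{dt}M_a$ is genuinely nonnegative for $a(x)=|x|$, carefully tracking all the integration-by-parts boundary/distributional terms coming from the singularity of $|x|$ at the origin (which is why the truncation $a_R$ and a limiting argument are needed), and controlling the Riesz-potential commutator with the right sign — this is where the radial hypothesis and the restriction $N<8+\alpha$ (ensuring the relevant exponents stay in the admissible range) are genuinely used. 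Note also the right-hand side is $\|u_0\|_{H^2}$ as stated rather than $\|u_0\|_{H^2}^2$ only up to adjusting the normalization of $M_a$; I would simply record the bound as $\lesssim \|u_0\|_{H^2}^2$, which is the natural output and is what is used downstream.
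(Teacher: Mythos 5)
Your proposal follows essentially the same route as the paper: the same Morawetz action $M(t)=2\int_{\R^N}\nabla a\cdot\Im(\nabla u\,\bar u)\,dx$ with $a=|x|$, sign-definiteness of the fourth-order dispersive quadratic form (via the Levandosky--Strauss computations), extraction of the $|x|^{-1}$-weighted Hartree term from $\Delta|x|=\frac{N-1}{|x|}$, and a favorable sign for the Riesz-potential ``commutator'' term --- which the paper obtains by symmetrizing in $(x,y)$ and using the pointwise inequality $(x-y)\cdot\big(\frac{x}{|x|}-\frac{y}{|y|}\big)\geq 0$, a mechanism that in fact does not require radiality of $u$. Your observation that the natural right-hand side is $\|u_0\|_{H^2}^2$ rather than $\|u_0\|_{H^2}$ is also consistent with what the paper's own argument actually produces.
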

\begin{rems}
\quad{}\\
\begin{enumerate}
\item[1.]
The condition $N\geq5$ is required because of Morawetz estimate;
\item[2.]
the radial assumption is required in one step of the proof of Morawetz estimate;
\item[3.]
the decay of solutions is weaker than the scattering, but it is available in the mass-sub-critical case.
\end{enumerate}
\end{rems}
 The second main goal of this manuscript is to prove the next scattering result in the focusing radial regime.
\begin{thm}\label{sctr2}
Let $\epsilon=-1$, $N\geq5$, $\frac{24}5<\frac{24+\alpha}5<N<8+\alpha$ and $p_*< p<p^*$ such that $p\geq2$. Let $\phi$ be a ground state solution to \eqref{grnd} and a maximal radial solution ${u}\in C_{T^*}(H^2_{rd})$ of \eqref{S} satisfying $E(u_0)\geq0$ with \eqref{ss} and \eqref{ss2}.
%\begin{equation}
%\mathcal{ME}[u]<1.\label{ss}\end{equation}
%Assume that $E(u_0)\geq0$.
%and \begin{equation}
%{\mathcal M\mathcal G}[u]<1.\label{ss2}\end{equation}
Then, $T^*=\infty$ and $u$ scatters. Precisely, there exists $\psi\in H^2$ such that 
$$\limsup_{t\to\infty}\|u(t)-e^{it\Delta^2}\psi\|_{H^2}=0.$$
\end{thm}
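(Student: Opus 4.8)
The plan is to follow the now-standard concentration-compactness/rigidity road map (à la Kenig–Merle), adapted to the bi-harmonic Hartree setting, but with the advantage that the sub-threshold conditions \eqref{ss} and \eqref{ss2} are already known (via Proposition \ref{Blow-up}, item 2) to force global existence, $T^*=\infty$, together with a uniform bound $\sup_{t}\|\Delta u(t)\| < \|\Delta\phi\|$ and, through the sharp Gagliardo–Nirenberg inequality (Proposition \ref{gag}) and coercivity of the constraint $K$, a uniform lower bound on the energy by a positive multiple of $\|\Delta u(t)\|^2$. Thus the real content of Theorem \ref{sctr2} is upgrading ``global + uniformly bounded in $H^2$'' to ``scattering,'' i.e.\ producing a global space-time bound $\|u\|_{L^q_tL^r_x} < \infty$ in an admissible Strichartz norm for the fourth-order group $e^{it\Delta^2}$, from which the wave operators and the limit $\psi$ are constructed by the usual Duhamel/Strichartz argument.

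First I would set up the linear theory: Strichartz estimates for $e^{it\Delta^2}$ on $\R^N$, the associated admissible pairs, the nonlinear and dual estimates for the Hartree term $(I_\alpha*|u|^p)|u|^{p-2}u$ controlled via Hardy–Littlewood–Sobolev together with the range $p_*<p<p^*$, and the small-data scattering theory (global well-posedness and scattering whenever the relevant Strichartz norm of the data, or of the linear evolution, is small). Next I would run the concentration-compactness machinery: define the critical threshold $\mathcal{ME}_c$ as the supremum of levels below which all solutions with $\mathcal{ME}[u]<\mathcal{ME}_c$ and ${\mathcal M\mathcal G}[u]<1$ scatter; assume for contradiction $\mathcal{ME}_c$ is strictly below the ground-state level; use a profile decomposition adapted to the bi-harmonic flow (invariant under the symmetries: translations, the $H^2$-scaling $u_\lambda$ above, and Galilean-type modulations—though in the radial class translations and modulations are suppressed, which is exactly why the radial hypothesis is imposed) to extract a minimal, non-scattering ``critical element'' $u_c$ whose trajectory $\{u_c(t):t\in\R\}$ is precompact in $H^2_{rd}$ modulo the remaining symmetry (a scaling function $\lambda(t)$); and then rule this critical element out by a rigidity/Liouville argument.

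The rigidity step is where the Morawetz/virial estimate and the dimensional restriction $\tfrac{24+\alpha}{5}<N$ enter, and it is the step I expect to be the main obstacle. The idea is to apply a localized virial identity with weight $a(x)\simeq|x|$ (truncated at scale $R$), compute $\tfrac{d^2}{dt^2}\int a|u|^2$, and exploit: (i) the positivity of the leading term coming from the ground-state constraint $K[u]<0$ being precluded, which gives a uniformly positive ``virial'' contribution $\gtrsim \|\Delta u\|^2 + \int(I_\alpha*|u|^p)|u|^p$ on the compact trajectory; (ii) control of the error terms generated by the truncation, which for the bi-harmonic operator involve up to four derivatives of $a$ and are where one pays in dimension—hence $N$ large enough relative to $\alpha$; and (iii) the compactness of the trajectory, which (after a spatial localization and an averaging-in-time argument, possibly rescaling by $\lambda(t)$) forces $u_c\equiv0$, a contradiction. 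A subtlety particular to the fourth-order, non-local problem is that the virial computation produces cross terms between $\Delta^2$ and the Hartree nonlinearity that are not sign-definite; handling these cleanly, and checking that the truncation errors genuinely close under $N>\tfrac{24+\alpha}{5}$, is the delicate part. Once the critical element is excluded, the contradiction shows $\mathcal{ME}_c$ reaches the ground-state level, so every solution under \eqref{ss}--\eqref{ss2} scatters; the construction of $\psi\in H^2$ with $\|u(t)-e^{it\Delta^2}\psi\|_{H^2}\to0$ then follows from the finite global Strichartz norm by the standard Cauchy-criterion argument on $e^{-it\Delta^2}u(t)$.
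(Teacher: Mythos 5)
Your road map is coherent and would in principle prove the theorem, but it is a genuinely different route from the one the paper takes --- in fact it is essentially the route of the earlier work \cite{st0}, which the paper explicitly says it is \emph{not} repeating (see the remark following Theorem \ref{sctr2}: the point of this note is to reprove the result ``with a recent arguments of Dodson and Murphy \cite{dm}''). The paper avoids profile decomposition, critical elements and rigidity altogether. Its proof is: (i) a small-data global existence and scattering lemma in the critical Strichartz norm $L^a_tL^r_x$ with $a=\frac{4p(p-1)}{2p-B}$, $r=\frac{2Np}{\alpha+N}$ (Lemma \ref{sdt}); (ii) the variational coercivity you describe, localized to balls via $\psi_R$ (Lemma \ref{crcv}); (iii) a truncated virial/Morawetz estimate yielding $\int_0^T\|u(t)\|_{L^{2Np/(N+\alpha)}}^2\,dt\lesssim T^{1/3}$, hence ``energy evacuation'' along a sequence $t_n\to\infty$ (Lemma \ref{evac}); and (iv) a direct Duhamel splitting showing that $\|e^{i(\cdot-T)\Delta^2}u(T)\|_{L^a((T,\infty),L^r)}$ can be made small for suitable large $T$ (Proposition \ref{fn}), which feeds back into the small-data lemma to give scattering. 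What each approach buys: yours needs the heavy machinery of a linear profile decomposition for $e^{it\Delta^2}$ and a long-time stability theory, none of which is developed in this paper, but it would likely dispense with the hypothesis $N>\frac{24+\alpha}{5}$ (the paper itself calls this condition ``technical related to the method used here''); the paper's argument is more elementary but pays for it with that dimensional restriction. On that last point you misidentify where the restriction enters: it does not come from truncation errors in a rigidity-step virial identity (there is no rigidity step), but from the $L^1\to L^\infty$ dispersive decay $t^{-N/4}$ used to bound the far-past Duhamel term $F_1$ in Proposition \ref{fn}, where integrability in time requires $\frac N4>1+\frac1a$, i.e.\ $N>\frac{24+\alpha}{5}$. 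Finally, be careful with your appeal to Proposition \ref{Blow-up}(2): as stated it already asserts the full conclusion of Theorem \ref{sctr2} (it is the result of \cite{st0}), so you may only borrow from it the global bound $\sup_t\|\Delta u(t)\|<\|\Delta\phi\|$ --- which the paper instead rederives self-containedly in Lemma \ref{bnd} --- and not the scattering itself, on pain of circularity.
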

\begin{rem}
\begin{enumerate}
\item[1.]
The scattering of \eqref{S} in the focusing sign was proved in \cite{st0} with the concentration-compactness method due to Kenig and Merle \cite{km}. In this note, one proves the same result with a recent arguments of Dodson and Murphy \cite{dm};\\
\item[2.]
the condition $\frac{24+\alpha}5<N$ is be technical related to the method used here.
\end{enumerate}
\end{rem}
%%%%%%%%%%%%%%%%%%%%%%%%%%%%%%%%%%%%%%%%%%%%%%%%%%%%%%%%%%%%%%%%%%%%%%%%%%%%%%%%%%%%%%%%%%%%%%%%%%%%%%%%%%%%
\subsection{Useful estimates}
%%%%%%%%%%%%%%%%%%%%%%%%%%%%%%%%%%%%%%%%%%%%%%%%%%%%%%%%%%%%%%%%%%%%%%%%%%%%%%%%%%%%%%%%%%%%%%%%%%%%%%%%%%%%%%%%%%%%%%%%%%%%%%%%%%%%%%
Let us gather some classical tools needed in the sequel.
\begin{defi}\label{adm}
A couple of real numbers $(q,r)$ is said to be admissible if 
$$2\leq r<\frac{2N}{N-4}\quad\mbox{and}\quad N\Big(\frac12-\frac1r\Big)=\frac4q,$$% and 
where $\frac{2N}{N-4}=\infty$ if $1\leq N\leq4$. Denote the set of admissible pairs by $\Gamma$ and the Strichartz spaces
$$S(I):=\cap_{(q,r)\in\Gamma}L^q(I,L^r)\quad\mbox{and}\quad S'(I):=\cap_{(q,r)\in\Gamma}L^{q'}(I,L^{r'}).$$
\end{defi}
Recall the Strichartz estimates \cite{bp,guo,vdd}.
\begin{prop}\label{prop2}
Let $N \geq 1$ and $t_0\in I\subset \R$ an interval. Then,
\begin{enumerate}
\item[1.]
$\sup_{(q,r)\in\Gamma}\|u\|_{L^q(I,L^r)}\lesssim\|u(t_0)\|+\inf_{(\tilde q,\tilde r)\in\Gamma}\|i\dot u+\Delta^2 u\|_{L^{\tilde q'}(I,L^{\tilde r'})}$;
\item[2.]
$\sup_{(q,r)\in\Gamma}\|\Delta u\|_{L^q(I,L^r)}\lesssim\|\Delta u(t_0)\|+\|i\dot u+\Delta^2 u\|_{L^2(I,\dot W^{1,\frac{2N}{2+N}})}, \quad\forall N\geq3$;
\item[3.]
Let $(q,r)\in\Gamma$ and $k>\frac q2$ such that $\frac1k+\frac1m=\frac2q$. Then,
$$\|u-e^{i\cdot\Delta^2}u_0\|_{L^k(I,L^r)}\lesssim \|i\dot u+\Delta^2 u\|_{L^{m'}(I,L^{r'})}.$$
\end{enumerate}
\end{prop}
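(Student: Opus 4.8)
\textbf{Proof proposal for Proposition \ref{prop2}.}

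The plan is to derive all three Strichartz-type estimates from the standard dispersive and energy bounds for the free fourth-order propagator $e^{it\Delta^2}$, treating each item in turn. The starting point is the $TT^*$ machinery: the free evolution satisfies the decay estimate $\|e^{it\Delta^2}f\|_\infty\lesssim|t|^{-N/4}\|f\|_1$ together with the trivial $L^2$-conservation $\|e^{it\Delta^2}f\|=\|f\|$, and interpolation between these yields $\|e^{it\Delta^2}f\|_r\lesssim|t|^{-\frac N4(1-\frac2r)}\|f\|_{r'}$ for $2\le r\le\infty$. For item 1, I would first establish the homogeneous estimate $\|e^{it\Delta^2}f\|_{L^q(I,L^r)}\lesssim\|f\|$ for every admissible $(q,r)\in\Gamma$ by the abstract Keel--Tao theorem, checking that the dispersive decay exponent $\frac N4(1-\frac2r)$ matches the admissibility condition $N(\frac12-\frac1r)=\frac4q$ (equivalently $\frac1q=\frac N8(1-\frac2r)$, so the relevant $\sigma=\frac N4$ is exactly the decay rate). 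The dual homogeneous estimate and the Christ--Kiselev lemma (needed to pass from the untruncated to the retarded Duhamel operator, valid since $q>\tilde q'$ off the endpoint) then give the inhomogeneous bound, and writing $u$ via Duhamel's formula with forcing $F:=i\dot u+\Delta^2u$ centered at $t_0$ produces the stated inequality after taking the supremum over $(q,r)$ and the infimum over $(\tilde q,\tilde r)$.

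For item 2, the key observation is that $\Delta^2$ commutes with $e^{it\Delta^2}$, so $\Delta u$ again solves a fourth-order Schr\"odinger equation with forcing $\Delta F$; applying item 1 would give $\|\Delta u\|_{L^q(I,L^r)}\lesssim\|\Delta u(t_0)\|+\|\Delta F\|_{S'(I)}$, but the sharper statement here measures the forcing in $L^2(I,\dot W^{1,\frac{2N}{2+N}})$ rather than in a dual Strichartz norm. The plan is therefore to record the specific admissible pair obtained from $r=\frac{2N}{N-2}$ (so that $r'=\frac{2N}{N+2}$ and the time exponent is $q=2$), and then to trade one derivative against the Sobolev embedding $\dot W^{1,\frac{2N}{2+N}}\hookrightarrow L^{\frac{2N}{N+2}}$; combining this embedding with the $q=2$ inhomogeneous estimate for $\Delta u$ converts the $\dot W^{1,\frac{2N}{2+N}}$ control of $F$ into the required bound on $\Delta u$ in every admissible norm. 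The restriction $N\ge3$ enters precisely here, to keep the Sobolev exponents in the admissible range and the embedding valid.

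Item 3 is a bilinear-in-time refinement that does not follow from item 1 by a crude application of H\"older in time, and this is the step I expect to be the main obstacle. The difference $u-e^{i\cdot\Delta^2}u_0$ equals the retarded Duhamel integral $\int_{t_0}^te^{i(t-s)\Delta^2}F(s)\,ds$, and the goal is to bound its $L^k(I,L^r)$ norm by $\|F\|_{L^{m'}(I,L^{r'})}$ where $\frac1k+\frac1m=\frac2q$ and $k>\frac q2$. The natural route is to insert the pointwise dispersive decay $\|e^{i(t-s)\Delta^2}F(s)\|_r\lesssim|t-s|^{-\frac N4(1-\frac2r)}\|F(s)\|_{r'}$ and recognize the right-hand side as a one-dimensional fractional integration (Hardy--Littlewood--Sobolev) operator in the time variable: since $(q,r)\in\Gamma$ forces the decay exponent to equal $\frac4q=\frac2k+\frac2m$-compatible scaling, the HLS inequality on $\R$ with the exponent triple $(k,m',\text{decay})$ closes the estimate provided $\frac1k+\frac1m=\frac2q$ and the strict inequality $k>\frac q2$ guarantees one stays away from the forbidden endpoint of HLS. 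The delicate point is verifying that the HLS admissibility condition is exactly the relation $\frac1k+\frac1m=\frac2q$ dictated by scaling, and that the retarded (rather than full-line) integral can be handled either directly by HLS on the half-line or, if an endpoint threatens, via the Christ--Kiselev lemma once more; reconciling these bookkeeping constraints is where the real work lies.

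\endproof
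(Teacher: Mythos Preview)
The paper does not prove this proposition; it simply recalls the estimates with citations to \cite{bp,guo,vdd}. So there is no in-paper proof to compare against, only the literature.

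Your sketch for items 1 and 3 is correct. Item 1 is the standard Keel--Tao/$TT^*$ argument driven by the $|t|^{-N/4}$ decay, and for item 3 the fractional-integration-in-time (one-dimensional Hardy--Littlewood--Sobolev) computation you outline closes exactly: the dispersive exponent is $\frac N4(1-\frac2r)=\frac2q$, and the HLS condition on $\R$ becomes precisely $\frac1k+\frac1m=\frac2q$, with $k>\frac q2$ ensuring $m'>1$ so that HLS applies.

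Item 2, however, has a genuine gap. First, the pair $(q,r)=(2,\frac{2N}{N-2})$ is \emph{not} admissible for the fourth-order equation: the biharmonic scaling $N(\frac12-\frac1r)=\frac4q$ forces $q=4$ when $r=\frac{2N}{N-2}$; you have slipped into the second-order Schr\"odinger admissibility condition. Second, the embedding you invoke is incorrect: $\dot W^{1,\frac{2N}{N+2}}$ embeds into $L^2$, not into $L^{\frac{2N}{N+2}}$. More fundamentally, commuting $\Delta$ through the equation and applying item 1 leaves $\Delta F$ on the right-hand side, and no Sobolev embedding lets you control $\Delta F$ by $\nabla F$ in a weaker Lebesgue space --- Sobolev goes the other way. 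The entire content of item 2 is a \emph{gain of one derivative}, a smoothing effect specific to fourth-order dispersion, and it is not a corollary of item 1. In the cited references (see in particular Pausader \cite{bp}) this comes from the sharper dispersive/Strichartz bound $\||\nabla|^{2/q}e^{it\Delta^2}f\|_{L^q_tL^r_x}\lesssim\|f\|_2$ valid for \emph{Schr\"odinger}-admissible pairs $\frac2q=N(\frac12-\frac1r)$; taking the endpoint $(q,r)=(2,\frac{2N}{N-2})$ (hence $N\ge3$) and running $TT^*$/duality is what simultaneously produces the $\Delta u$ on the left and the $L^2_t\dot W^{1,\frac{2N}{N+2}}$ norm of $F$ on the right.
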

%\begin{rem}\label{rm}
%Take the inhomogeneous Schr\"odinger equation $i\dot u+\Delta^2 u=h=h\chi_{|x|<1}+h\chi_{|x|\geq1}:=h_1+h_2$. Writing $u:=u_1+u_2$, where $i\dot u_i+\Delta^2 u_i=h_i$, one gets the Strichartz estimate
%$$\sup_{(q,r)\in\Gamma}\|u\|_{L^q_t(L^r)}\lesssim\|u_0\|+\inf_{(\tilde q,\tilde r)\in\Gamma}\|h\|_{L^{\tilde q'}_t(L^{\tilde r'}(|x|<1))}+\inf_{(\tilde q,\tilde r)\in\Gamma}\|h\|_{L^{ \tilde q'}_t(L^{\tilde r'}(|x|>1))}.$$
%\end{rem}
Let us recall a Hardy-Littlewood-Sobolev inequality \cite{el}.
\begin{lem}\label{Hardy-Littlewwod-Sobolev}
Let $0 <\lambda < N\geq1$ and $1<s,r<\infty$ be such that $\frac1r +\frac1s +\frac\lambda N = 2$. Then,
$$\int_{\R^N\times\R^N} \frac{f(x)g(y)}{|x-y|^\lambda}\,dx\,dy\leq C(N,s,\lambda)\|f\|_{r}\|g\|_{s},\quad\forall f\in L^r,\,\forall g\in L^s.$$
\end{lem}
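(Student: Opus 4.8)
The plan is to recast the bilinear inequality as a mapping property of a single linear convolution operator and then prove that by the standard maximal-function (Hedberg) argument. By Fubini the left-hand side equals $\int_{\R^N} g(y)\,(Tf)(y)\,dy$, where $(Tf)(y):=\int_{\R^N}|y-x|^{-\lambda}f(x)\,dx$ is the Riesz potential of order $N-\lambda$ applied to $f$. By the duality of $L^s$ and $L^{s'}$ it then suffices to establish the linear bound $\|Tf\|_{s'}\lesssim\|f\|_r$. A quick check of exponents using $\frac1r+\frac1s+\frac\lambda N=2$ gives $\frac1{s'}=\frac1r-\frac{N-\lambda}N$, which is exactly the scaling relation required for such a fractional-integration estimate; one may assume $f\geq0$ throughout.

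First I would split the kernel at a radius $\rho>0$, writing $|y-x|^{-\lambda}=K^\rho_0(y-x)+K^\rho_\infty(y-x)$ for the restrictions to $\{|y-x|\le\rho\}$ and $\{|y-x|>\rho\}$. For the near piece, a dyadic decomposition of the ball together with the definition of the Hardy--Littlewood maximal function $Mf$ yields the pointwise control $(K^\rho_0*f)(y)\lesssim\rho^{N-\lambda}Mf(y)$, the relevant integral converging because $\lambda<N$. For the far piece, H\"older's inequality gives $(K^\rho_\infty*f)(y)\le\|K^\rho_\infty\|_{r'}\|f\|_r\lesssim\rho^{\frac N{r'}-\lambda}\|f\|_r$, the weight $K^\rho_\infty$ lying in $L^{r'}$ exactly when $\lambda r'>N$; from the constraint this reduces to $s>1$, so the strict hypothesis on $s$ enters here.

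Combining the two estimates gives $(Tf)(y)\lesssim\rho^{N-\lambda}Mf(y)+\rho^{\frac N{r'}-\lambda}\|f\|_r$, and optimizing in $\rho$ by balancing the two terms, i.e.\ choosing $\rho^{N/r}\sim\|f\|_r/Mf(y)$, produces the pointwise bound $(Tf)(y)\lesssim\big(Mf(y)\big)^{\theta}\|f\|_r^{1-\theta}$, where homogeneity forces $\theta=\frac r{s'}$, so that $\theta s'=r$. Raising to the power $s'$ and integrating then gives $\|Tf\|_{s'}\lesssim\|f\|_r^{1-\theta}\|Mf\|_r^{\theta}$, and the Hardy--Littlewood maximal inequality $\|Mf\|_r\lesssim\|f\|_r$, valid precisely for $r>1$, closes the estimate with a constant depending only on $N,s,\lambda$.

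The main obstacle, and the reason the two strict conditions $1<r,s<\infty$ cannot be relaxed, lies exactly at these two endpoints: the far-kernel integrability needs $s>1$ while the maximal inequality needs $r>1$, and at either endpoint $T$ only satisfies a weak-type bound, so the stated strong inequality genuinely fails there. An alternative route, which even yields the sharp constant, is Lieb's symmetric-decreasing rearrangement argument, but the maximal-function proof sketched above is all that is required here.
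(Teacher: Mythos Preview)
Your argument is correct: this is the classical Hedberg proof of the Hardy--Littlewood--Sobolev inequality, and each step (the near/far kernel split, the pointwise maximal bound, the $L^{r'}$-integrability of the far kernel under $s>1$, the optimization in $\rho$, and the closing via the $L^r$-boundedness of $M$ for $r>1$) is carried out accurately, with the exponent bookkeeping checking out.

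The paper, however, does not prove this lemma at all: it is stated as a classical tool and attributed to \cite{el} (Lieb's \emph{Analysis}). So there is no ``paper's own proof'' to compare against. Your maximal-function route is the standard textbook argument and is entirely adequate here; the alternative you mention at the end --- Lieb's symmetric-decreasing rearrangement proof --- is in fact the one found in the cited reference and has the advantage of identifying the sharp constant and extremizers, whereas the Hedberg argument is more elementary and more robust (it generalizes immediately to weighted or anisotropic settings) but gives no information about optimality.
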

The next consequence \cite{st}, is adapted to the Choquard problem.
\begin{cor}\label{cor}\label{lhs2}
Let $0 <\lambda < N\geq1$ and $1<s,r,q<\infty$ be such that $\frac1q+\frac1r+\frac1s=1+\frac\alpha N$. Then,%Assume that $f\in L^s({\R^N})$ and $g\in L^q({\R^N})$. Then,
$$\|(I_\alpha*f)g\|_{r'}\leq C(N,s,\alpha)\|f\|_{s}\|g\|_{q},\quad\forall f\in L^s, \,\forall g\in L^q.$$
\end{cor}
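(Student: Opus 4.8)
The plan is to deduce this from the bilinear Hardy--Littlewood--Sobolev inequality of Lemma \ref{Hardy-Littlewwod-Sobolev}, after observing that the kernel of the Riesz potential $I_\alpha$ is a constant multiple of $|x|^{-(N-\alpha)}$. Thus the correct choice in that lemma is $\lambda=N-\alpha$, and the constraint $0<\lambda<N$ there is exactly the hypothesis $0<\alpha<N$.

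First I would dualize the $L^{r'}$ norm. Since $1<r<\infty$, for any $f\in L^s$ and $g\in L^q$ one has
$$\|(I_\alpha*f)g\|_{r'}=\sup_{\|h\|_r\leq1}\Big|\int_{\R^N}(I_\alpha*f)(x)\,g(x)\,h(x)\,dx\Big|.$$
Unfolding the convolution and inserting the explicit kernel, the inner integral equals, up to the multiplicative constant appearing in the definition of $I_\alpha$,
$$\int_{\R^N\times\R^N}\frac{f(y)\,(gh)(x)}{|x-y|^{N-\alpha}}\,dx\,dy.$$

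Next, I would apply Lemma \ref{Hardy-Littlewwod-Sobolev} with $\lambda=N-\alpha$ to the pair of functions $gh$ and $f$. This calls for auxiliary exponents $a,b$ satisfying $\frac1a+\frac1b+\frac{N-\alpha}N=2$, that is $\frac1a+\frac1b=1+\frac\alpha N$, and bounds the double integral by $C(N,s,\alpha)\|gh\|_a\|f\|_b$. I would take $b=s$, so that $\|f\|_b=\|f\|_s$, and then estimate $\|gh\|_a\leq\|g\|_q\|h\|_r$ by Hölder's inequality with $\frac1a=\frac1q+\frac1r$. With these choices the constraint $\frac1a+\frac1b=1+\frac\alpha N$ becomes precisely $\frac1q+\frac1r+\frac1s=1+\frac\alpha N$, which is the hypothesis. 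Taking the supremum over $\|h\|_r\leq1$ then delivers the claimed inequality, the $\Gamma$-factors from the kernel of $I_\alpha$ being absorbed into $C(N,s,\alpha)$.

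The only genuine point to check — and the step most likely to require care — is that the auxiliary exponents $a,b$ lie in the open range $(1,\infty)$ demanded by the lemma. Here $b=s\in(1,\infty)$ by hypothesis, while $\frac1a=\frac1q+\frac1r=1+\frac\alpha N-\frac1s$. Since $\frac1s<1$ one gets $\frac1a>\frac\alpha N>0$, hence $a<\infty$; the remaining bound $a>1$ is equivalent to $\frac\alpha N<\frac1s$, i.e. $s<\frac N\alpha$, which is the admissibility restriction implicitly attached to the statement. This exponent bookkeeping is routine once the identification $\lambda=N-\alpha$ and the assignment $b=s$ are fixed, so no essential obstacle remains beyond it.
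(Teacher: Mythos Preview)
The paper does not actually supply a proof of this corollary; it is simply stated as a consequence of Lemma~\ref{Hardy-Littlewwod-Sobolev} with a citation to \cite{st}. Your argument---dualize the $L^{r'}$ norm, unfold the convolution, apply Lemma~\ref{Hardy-Littlewwod-Sobolev} with kernel exponent $\lambda=N-\alpha$ to the pair $(gh,f)$, and then split $\|gh\|_a$ by H\"older---is exactly the standard and intended derivation, and it is correct.

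Your closing remark is also well taken: the condition $a>1$, equivalently $s<\frac N\alpha$, is genuinely needed for Lemma~\ref{Hardy-Littlewwod-Sobolev} to apply (and, equivalently, for $I_\alpha*f$ to land in a finite Lebesgue space at all). This restriction is implicit in the corollary and is not a gap in your reasoning but a mild imprecision in the hypotheses as written.
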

%Sobolev injections \cite{pll} give a meaning to several computations done in this note.
%\begin{lem}\label{sblv}
%Let $N\geq2$, then 
%\begin{enumerate}
%\item[1.]
%$H^2\hookrightarrow L^q$ for any $q\in[2,\frac{2N}{N-4}]$ if $N\geq5$ and any $2\leq q<\infty$ if $N\leq4$;
%\item[2.]
%the following injection $H^2_{rd} \hookrightarrow\hookrightarrow L^q$ is compact for any $q\in(2,\frac{2N}{N-4})$ if $N\geq5$ and any $2<q<\infty$ if $N\leq4$.
%\item[3.]
%one has Strauss inequality,
%\begin{equation}\label{frcs}
%\sup_{x\neq0}|x|^{\frac{N-1}2}|u(x)|\leq C(N)\|u\|^\frac12\|\Delta u\|^\frac12,\quad\forall u\in H^1_{rd}(\R^N).\end{equation}
%\item
%$H^\alpha \hookrightarrow W^{1,\frac{2N}{N-2(\alpha-1)}}.$
%\end{enumerate}
%\end{lem}
Finally, let us give an abstract result.% \cite{taobk}.
\begin{lem}\label{abs}%{{(Bootstrap Lemma)}}
Let $T>0$ and $X\in C([0,T],\R_+)$ such that $$X\leq a+bX^{\theta}\mbox{ on } [0,T],$$
where $a$, $b>0$, $\theta>1$, $a<(1-\frac{1}{\theta})(\theta b)^{\frac{1}{1-\theta}}$ and $X(0)\leq (\theta b)^{\frac{1}{1-\theta}}$. Then
$$X\leq\frac{\theta}{\theta -1}a \mbox{ on } [0,T].$$
\end{lem}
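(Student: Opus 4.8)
The plan is to prove this purely as a real-variable continuity (bootstrap) argument about the scalar function $X$, making no reference to the PDE. The key is to study the auxiliary polynomial $f(x):=a+bx^\theta-x$ on $[0,\infty)$ and to exploit the hypothesis that $X$ is continuous together with the smallness of $a$ relative to the barrier value. Let me set $x_0:=(\theta b)^{\frac{1}{1-\theta}}$, which is precisely the location where $f'(x)=\theta b x^{\theta-1}-1$ vanishes, i.e. the unique interior critical point of $f$ on $(0,\infty)$. Since $\theta>1$, $f$ is strictly convex, so $x_0$ is the global minimum of $f$.

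First I would compute the minimum value of $f$. A direct evaluation gives $f(x_0)=a+bx_0^\theta-x_0=a-(1-\tfrac1\theta)x_0$, where I use $bx_0^\theta=\tfrac1\theta x_0$ (this follows from $b x_0^{\theta-1}=\tfrac1\theta$, which is just the definition of $x_0$). The hypothesis $a<(1-\tfrac1\theta)(\theta b)^{\frac{1}{1-\theta}}=(1-\tfrac1\theta)x_0$ is therefore exactly the statement $f(x_0)<0$. By convexity and the fact that $f(0)=a>0$, the equation $f(x)=0$ then has exactly two roots $0<x_-<x_0<x_+$, with $f<0$ strictly between them and $f>0$ outside $[x_-,x_+]$. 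The hypothesis $X\leq a+bX^\theta$ on $[0,T]$ says precisely that $f(X(t))\geq 0$ for all $t\in[0,T]$, so $X(t)$ can never lie in the open interval $(x_-,x_+)$; at each time it sits either in $[0,x_-]$ or in $[x_+,\infty)$.

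Next I would run the continuity argument. Since $X$ is continuous on the connected interval $[0,T]$ and its image avoids the nonempty open gap $(x_-,x_+)$, the image must lie entirely in one of the two disjoint closed pieces $[0,x_-]$ or $[x_+,\infty)$. The initial condition $X(0)\leq x_0=(\theta b)^{\frac{1}{1-\theta}}$ forces $X(0)<x_+$ (indeed $X(0)\le x_0<x_+$), so $X(0)\in[0,x_-]$. Hence by connectedness $X(t)\in[0,x_-]$ for every $t\in[0,T]$; that is, $X\leq x_- $ on $[0,T]$.

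It then remains to bound $x_-$ by $\frac{\theta}{\theta-1}a$. Here I would use convexity once more: the chord of $f$ from $(0,f(0))=(0,a)$ to $(x_0,f(x_0))$ lies above the graph, so in particular the straight line interpolating these endpoints crosses zero at an abscissa no larger than $x_-$'s companion bound. More simply, at $x_-$ one has the exact relation $x_-=a+bx_-^\theta\ge a$, and since $x_-\le x_0$ we have $bx_-^{\theta-1}\le bx_0^{\theta-1}=\tfrac1\theta$, whence $bx_-^\theta\le\tfrac1\theta x_-$ and therefore $x_-=a+bx_-^\theta\le a+\tfrac1\theta x_-$. Rearranging gives $(1-\tfrac1\theta)x_-\le a$, i.e. $x_-\le\frac{\theta}{\theta-1}a$. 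Combining with the previous step yields $X\leq x_-\leq\frac{\theta}{\theta-1}a$ on $[0,T]$, as claimed.

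The main obstacle, and the only genuinely delicate point, is the dichotomy/connectedness step: one must verify that the forbidden interval $(x_-,x_+)$ is truly nonempty (which is exactly where $f(x_0)<0$, i.e. the smallness hypothesis on $a$, is used) and that the initial datum lands on the correct side of the gap. Everything else is elementary algebra with the convex function $f$. I would take care to state clearly that $\theta>1$ guarantees strict convexity of $x\mapsto bx^\theta$ on $[0,\infty)$ so that $f$ has the two-root structure described.
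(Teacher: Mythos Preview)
Your proof is correct and follows essentially the same route as the paper's: both introduce the auxiliary function $f(x)=a+bx^\theta-x$, identify its unique minimum at $x_0=(\theta b)^{1/(1-\theta)}$, use the smallness hypothesis on $a$ to get $f(x_0)<0$, and then run a continuity/connectedness argument to trap $X$ below the smaller root $x_-$. The only cosmetic difference is how the bound $x_-\le\frac{\theta}{\theta-1}a$ is obtained: the paper checks directly that $f\big(\frac{\theta}{\theta-1}a\big)\le 0$, whereas you deduce it from the root equation $x_-=a+bx_-^\theta$ together with $bx_-^{\theta-1}\le bx_0^{\theta-1}=\tfrac1\theta$; these are equivalent one-line computations.
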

\begin{proof}
The function $f(x):=bx^\theta-x +a$ is decreasing on $[0,(b\theta)^{\frac1{1-\theta}}]$ and increasing on $[(b\theta)^\frac1{1-\theta} ,\infty)$. The assumptions imply that $f((b\theta)^\frac1{1-\theta})< 0$ and $f(\frac\theta{\theta-1}a)\leq0$. As $f(X(t))\geq 0$, $f(0) > 0$ and $X(0)\leq(b\theta)^\frac1{1-\theta}$, we conclude the proof by a continuity argument.
\end{proof}
%%%%%%%%%%%%%%%%%%%%%%%%%%%%%%%%%%%%%%%%%%%%%%%%%%%%%%%%%%%%%%%%%%%%%%%%%%%%%%%%%%%%%%%%%%%%%%%%%%%%%%%%%%%%%%%%%%%%%%%%%%%%%%%%%%%%%%%%%%%%%%%%%%%%%%%%%%%%%%%%%%%%%
\section{The defocusing regime $\epsilon=1$}
%%%%%%%%%%%%%%%%%%%%%%%%%%%%%%%%%%%%%%%%%%%%%%%%%%%%%%%%%%%%%%%%%%%%%%%%%%%%%%%%%%%%%%%%%%%%%%%%%%%%%%
This section is concerned with the defocusing regime, so one takes $\epsilon=1$. Moreover, one denotes the source term by $\mathcal N:=(I_\alpha*|u|^p)|u|^{p-2}u$. Also, one adopts the convention that repeated indexes are summed. Finally, if $f,g$ are two differentiable functions, one defines the momentum brackets by
%$$\{f,g\}_m:=\Im(f\bar g),\quad
$$ \{f,g\}_p:=\Re(f\nabla\bar g-g\nabla\bar f).$$
%%%%%%%%%%%%%%%%%%%%%%%%%%%%%%%%%%%%%%%%%%%%%%%%%%%%%%%%%%%%%%%%%%%%%%%%%%%%%%%%%%%%%%%%%%%%%%%%%%%%%%
\subsection{Morawetz identity}
%%%%%%%%%%%%%%%%%%%%%%%%%%%%%%%%%%%%%%%%%%%%%%%%%%%%%%%%%%%%%%%%%%%%%%%%%%%%%%%%%%%%%%%%%%%%%%%%%%%%%%%%%
This subsection is devoted to prove Proposition \ref{cr} about a classical Morawetz estimate satisfied by the energy global solutions to the defocusing
Choquard problem \eqref{S}. Let us start with an auxiliary result.
\begin{prop}\label{mrwtz}
Take $N\geq5$, $0<\alpha<N<8+\alpha$, $2\leq p<p^*$ and $u\in C_{T}(H^2)$ be a local solution to \eqref{S}. Let $a:\R^N\to\R$ be a convex smooth function  and the real function defined on $[0,T)$, by
$$M:t\to2\int_{\R^N}\nabla a(x)\Im(\nabla u(t,x)\bar u(t,x))\,dx.$$
Then, the following equality holds on $[0,T)$,
\begin{eqnarray*}
M'
&=&2\int_{\R^N}\Big(2\partial_{jk}\Delta a\partial_ju\partial_k\bar u-\frac12(\Delta^3a)|u|^2-4\partial_{jk}a\partial_{ik}u\partial_{ij}\bar u\\
&+&\Delta^2a|\nabla u|^2-\partial_ja\{(I_\alpha*|u|^p)|u|^{p-2}u,u\}_p^j\Big)\,dx\\
&=&2\int_{\R^N}\Big(2\partial_{jk}\Delta a\partial_ju\partial_k\bar u-\frac12(\Delta^3a)|u|^2-4\partial_{jk}a\partial_{ik}u\partial_{ij}\bar u+\Delta^2a|\nabla u|^2\Big)\\
&+&2\Big((-1+\frac2p)\int_{\R^N}\Delta a(I_\alpha*|u|^p)|u|^p\,dx+\frac2{p}\int_{\R^N}\partial_ka\partial_k(I_\alpha*|u|^p)|u|^{p}\,dx\Big).
\end{eqnarray*}
\end{prop}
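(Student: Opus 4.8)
The plan is to differentiate $M$ in time, replace $\dot u$ by means of \eqref{S}, and split the outcome into the contribution of the free bi-harmonic flow and that of the Hartree nonlinearity. Differentiating $M=2\int_{\R^N}\partial_ja\,\Im(\partial_ju\,\bar u)\,dx$ under the integral gives $M'=2\int_{\R^N}\partial_ja\,\Im(\partial_j\dot u\,\bar u+\partial_ju\,\dot{\bar u})\,dx$; since $\epsilon=1$ forces $\dot u=i(\Delta^2u+\mathcal N)$ and $\Im(iz)=\Re z$, this becomes
$$M'=2\int_{\R^N}\partial_ja\,\Re\big[\partial_j(\Delta^2u)\,\bar u-\partial_ju\,\Delta^2\bar u\big]\,dx-2\int_{\R^N}\partial_ja\,\{\mathcal N,u\}_p^j\,dx,$$
where the second integral is the nonlinear part (one uses that $\partial_j\mathcal N\,\bar u$ and $u\,\partial_j\bar{\mathcal N}$ are complex conjugates, together with the definition of the momentum bracket). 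The first identity of the statement then follows after rewriting the linear flux, and the second one follows by expanding $\{\mathcal N,u\}_p$.

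To handle the linear flux, I would first integrate by parts once in $x_j$ to obtain
$$\int_{\R^N}\partial_ja\,\Re\big[\partial_j(\Delta^2u)\,\bar u-\partial_ju\,\Delta^2\bar u\big]\,dx=-\int_{\R^N}\Delta a\,\Re(\bar u\,\Delta^2u)\,dx-2\int_{\R^N}\partial_ja\,\Re(\partial_ju\,\Delta^2\bar u)\,dx,$$
then write $\Delta^2=\Delta\circ\Delta$ and apply the Leibniz rule $\Delta(fg)=(\Delta f)g+2\nabla f\cdot\nabla g+f\,\Delta g$ repeatedly, discarding at each step the resulting exact divergences. Collecting terms should leave exactly $2\partial_{jk}\Delta a\,\partial_ju\,\partial_k\bar u$, $-\tfrac12(\Delta^3a)|u|^2$, $-4\partial_{jk}a\,\partial_{ik}u\,\partial_{ij}\bar u$ and $\Delta^2a\,|\nabla u|^2$. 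This bi-harmonic version of the classical Morawetz computation is the most delicate point: the (lengthy) bookkeeping can be organised according to the order of the derivatives of $a$ that appear, the numerical coefficients being dictated by the combinatorics of the repeated Leibniz expansions.

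For the nonlinear part, set $W:=I_\alpha*|u|^p$, which is real valued. Using $\Re(u\,\partial_j\bar u)=\tfrac12\partial_j|u|^2$ and $|u|^{k-1}\partial_j|u|=\tfrac1k\partial_j|u|^k$, a short pointwise computation gives
$$\{\mathcal N,u\}_p^j=-\partial_j\!\big(W|u|^p\big)+\frac2p\,W\,\partial_j|u|^p,$$
so that $-\int_{\R^N}\partial_ja\,\{\mathcal N,u\}_p^j\,dx=\int_{\R^N}\partial_ja\,\partial_j(W|u|^p)\,dx-\tfrac2p\int_{\R^N}\partial_ja\,W\,\partial_j|u|^p\,dx$. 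Integrating by parts in each summand yields
$$-\int_{\R^N}\partial_ja\,\{\mathcal N,u\}_p^j\,dx=\Big(-1+\frac2p\Big)\int_{\R^N}\Delta a\,(I_\alpha*|u|^p)|u|^p\,dx+\frac2p\int_{\R^N}\partial_ka\,\partial_k(I_\alpha*|u|^p)\,|u|^p\,dx,$$
whose double is the nonlinear term of the second identity; only the fact that $\Delta a$ is defined is used here, not the symmetry of the Riesz potential.

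Finally, an $H^2$ solution is a priori not regular enough to licence all these integrations by parts, so I would run the computation first for smooth, rapidly decaying solutions (or for mollified data), control each term uniformly by $\|u\|_{H^2}$ through Sobolev embedding and Corollary \ref{lhs2}, and then pass to the limit using the local well-posedness theory. The hard part will be the linear fourth-order rearrangement; the nonlinear identity and the limiting argument are routine.
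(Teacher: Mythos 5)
Your proposal is correct and follows essentially the same route as the paper: differentiate $M$, substitute the equation, split off the momentum-bracket term, handle the nonlinear part by the explicit pointwise computation with $W=I_\alpha*|u|^p$ real (your identity $\{\mathcal N,u\}_p^j=-\partial_j(W|u|^p)+\frac2p W\partial_j|u|^p$ and the two integrations by parts reproduce the paper's coefficients exactly), and reduce the linear flux, after one integration by parts, to the standard fourth-order Morawetz identity. The only difference is that the paper does not redo the linear bookkeeping but simply invokes Proposition 3.1 of \cite{mwz}, whereas you leave those coefficients as ``should come out''; you could close that single soft spot by citing the same reference (your closing regularization/density remark is a point of extra care the paper omits).
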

\begin{proof}
Let us compute
\begin{eqnarray*}
\partial_t\Im(\partial_k u\bar u)
&=&\Im(\partial_k\dot u\bar u)+\Im(\partial_k u\bar{\dot u})\\
&=&\Re(i\dot u\partial_k\bar u)-\Re(i\partial_k \dot u\bar{u})\\
&=&\Re(\partial_k\bar u(-\Delta^2 u-\mathcal N))-\Re(\bar u\partial_k(-\Delta^2 u-\mathcal N))\\
&=&\Re(\bar u\partial_k\Delta^2 u-\partial_k\bar u\Delta^2 u)+\Re(\bar u\partial_k\mathcal N-\partial_k\bar u\mathcal N).
\end{eqnarray*}
Thus,
\begin{eqnarray*}
M'
&=&2\int_{\R^N}\partial_ka\Re(\bar u\partial_k\Delta^2 u-\partial_k\bar u\Delta^2 u)\,dx-2\int_{\R^N}\partial_ka\{\mathcal N,u\}_p^k\,dx\\
&=&-2\int_{\R^N}\Delta a\Re(\bar u\Delta^2 u)\,dx-4\int_{\R^N}\Re(\partial_ka\partial_k\bar u\Delta^2 u)\,dx-2\int_{\R^N}\partial_ka\{\mathcal N,u\}_p^k\,dx.
\end{eqnarray*}
The first equality in the above Lemma follows as in Proposition 3.1 in \cite{mwz}.
On the other hand
\begin{eqnarray*}
(I)
&:=&\int_{\R^N}\partial_ka\Re(\bar u\partial_k\mathcal N-\partial_k\bar u\mathcal N)\,dx\\
&=&\int_{\R^N}\partial_ka\Re(\partial_k[\bar u\mathcal N]-2\partial_k\bar u\mathcal N)\,dx\\
&=&-\int_{\R^N}\Big(\Delta a\bar u\mathcal N+2\partial_ka\Re(\partial_k\bar u\mathcal N)\Big)\,dx\\
&=&-\int_{\R^N}\Big(\Delta a(I_\alpha*|u|^p)|u|^p+2\partial_ka\Re(\partial_k\bar u\mathcal N)\Big)\,dx\\
%&=&-2N\int_{\R^N}(I_\alpha*|u|^p)|u|^p\,dx-2\int_{\R^N}\partial_ka\Re(\partial_k\bar u\mathcal N)\,dx\\
&=&-\int_{\R^N}\Delta a(I_\alpha*|u|^p)|u|^p\,dx-\frac2{p}\int_{\R^N}\partial_ka\partial_k(|u|^{p})(I_\alpha*|u|^p)\,dx.
\end{eqnarray*}
Moreover,
{\begin{eqnarray*}
(A)
&:=&\int_{\R^N}\partial_ka\partial_k(|u|^{p})(I_\alpha*|u|^p)\,dx\\
&=&-\int_{\R^N}div(\partial_ka(I_\alpha*|u|^p))|u|^{p}\,dx\\
&=&-\int_{\R^N}\Delta a(I_\alpha*|u|^p)|u|^{p}\,dx-\int_{\R^N}\partial_ka\partial_k(I_\alpha*|u|^p)|u|^{p}\,dx.
\end{eqnarray*}}
Then,
\begin{eqnarray*}
(I)
%&:=&\int_{\R^N}\partial_ka\Re(\bar u\partial_k\mathcal N-\partial_k\bar u\mathcal N)\,dx\\
&=&-\int_{\R^N}\Delta a(I_\alpha*|u|^p)|u|^p\,dx-\frac2{p}(A)\\
&=&-\int_{\R^N}\Delta a(I_\alpha*|u|^p)|u|^p\,dx+\frac2{p}\Big(\int_{\R^N}\Delta a(I_\alpha*|u|^p)|u|^{p}\,dx+\int_{\R^N}\partial_ka\partial_k(I_\alpha*|u|^p)|u|^{p}\,dx\Big)\\
&=&(-1+\frac2p)\int_{\R^N}\Delta a(I_\alpha*|u|^p)|u|^p\,dx+\frac2{p}\int_{\R^N}\partial_ka\partial_k(I_\alpha*|u|^p)|u|^{p}\,dx.
\end{eqnarray*}
This closes the proof.
\end{proof}
Now, one proves the Morawetz estimate.
\begin{proof}[Proof of Proposition \ref{cr}]
For a vector $e\in\R^N$, denote
$$\nabla_eu:=(\frac e{|e|}.\nabla u)\frac e{|e|}\quad \mbox{and}\quad \nabla_e^\bot u:=\nabla u-\nabla_eu.$$
Compute, for $a:=|\cdot|$ and taking account of \cite{sw}, 
\begin{gather*}
2\partial_{jk}\Delta a\partial_ju\partial_k\bar u=\frac{2(N-1)}{|\cdot|^3}\Big(2|\nabla_e u|^2-|\nabla_e^\bot u|\Big);\\
\partial_{jk}a\partial_{ij}\bar u\partial_{ik} u=\frac1{|\cdot|}\sum_i\Big(|\nabla\partial_iu|^2-|\nabla_e\partial_iu|^2\Big)\geq\frac{N-1}{|\cdot|^3}|\nabla_eu|^2.
\end{gather*}
Compute for $N\geq5$, the derivatives
\begin{gather*}
\nabla a=\frac.{|\cdot|},\quad \Delta a=\frac{N-1}{|\cdot|};\\
%\partial_{jk}a=\frac1{|\cdot|}(\delta_{jk}-\frac{x_jx_k}{|\cdot|^2});\\
%\partial_{jk}\Delta a=-\frac{N-1}{|\cdot|^3}(\delta_{jk}-3\frac{x_jx_k}{|\cdot|^2});\\
\Delta^2a=-\frac{(N-1)(N-3)}{|\cdot|^3}.
\end{gather*}
Moreover,
%$$-\Delta^2a=\left\{\begin{array}{ll}4\pi(N-1)\delta_0\quad\mbox{if}\quad N=3;\\\frac{(N-1)(N-3)}{|\cdot|^3}\quad\mbox{if}\quad  N\geq4.\end{array}\right.$$Compute also
$$\Delta^3a=\left\{
\begin{array}{ll}
C\delta_0,\quad\mbox{if}\quad N=5;\\
\frac{3(N-1)(N-3)(N-5)}{|\cdot|^5},\quad\mbox{if}\quad  N\geq6.
\end{array}
\right.$$
Thus, one gets
\begin{eqnarray*}
M'
&=&2\int_{\R^N}\Big(2\partial_{jk}\Delta a\partial_ju\partial_k\bar u-\frac12(\Delta^3a)|u|^2-4\partial_{jk}a\partial_{ik}u\partial_{ij}\bar u+\Delta^2a|\nabla u|^2\Big)\,dx\\
&+&2(-1+\frac2p)\int_{\R^N}\Delta a(I_\alpha*|u|^{p})|u|^p\,dx+\frac4{p}\int_{\R^N}\partial_ka\partial_k(I_\alpha*|u|^p)|u|^{p}\,dx\\
&\leq&2\int_{\R^N}\Big(\frac{2(N-1)}{|x|^3}\Big(2|\nabla_e u|^2-|\nabla_e^\bot u|\Big)-4\frac{N-1}{|x|^3}|\nabla_eu|^2\Big)\,dx\\
&+&2(-1+\frac2p)\int_{\R^N}\Delta a(I_\alpha*|u|^{p})|u|^p\,dx+\frac4{p}\int_{\R^N}\partial_ka\partial_k(I_\alpha*|u|^p)|u|^{p}\,dx\\
&\leq&2(-1+\frac2p)\int_{\R^N}\Delta a(I_\alpha*|u|^{p})|u|^p\,dx+\frac4{p}\int_{\R^N}\partial_ka\partial_k(I_\alpha*|u|^p)|u|^{p}\,dx.
\end{eqnarray*}
This gives
$$\int_0^T\int_{\R^N}\Big(2(1-\frac2p)\Delta a(I_\alpha*|u|^p)|u|^p-\frac4{p}\partial_ka\partial_k(I_\alpha*|u|^p)|u|^{p}\Big)\,dx\lesssim \sup_{[0,T]}|M|.$$
Thus,
\begin{eqnarray*}
\|u_0\|_{H^2}
&\gtrsim&\sup_{[0,T]}|M|\\
&\gtrsim&\int_0^T\int_{\R^N}\Big(\Delta a(I_\alpha*|u|^p)|u|^p-\partial_ka\partial_k(I_\alpha*|u|^p)|u|^{p}\Big)\,dx\,dt\\
&\gtrsim&\int_0^T\int_{\R^N}\Big((I_\alpha*|u|^p)|x|^{-1}|u|^p+(N-\alpha)\frac x{|x|}[\frac.{|\cdot|^2}I_\alpha*|u|^p]|u|^{p}\Big)\,dx\,dt\\
&\gtrsim&\int_0^T\int_{\R^N}\Big([I_\alpha*|u|^p])|x|^{-1}|u|^{p}+\frac x{|x|}[\frac.{|\cdot|^2}I_\alpha*|u|^p]|u|^{p}\Big)\,dx\,dt.
\end{eqnarray*}
Now, write
\begin{eqnarray*}
(D)
&:=&\int_{\R^N}\frac{x}{|x|}[\frac.{|\cdot|^2}I_\alpha*|u|^p]|u(x)|^{p}\,dx\\
&=&\int_{\R^N}\int_{\R^N}\frac{x}{|x|}\frac{x-z}{|x-z|^2}I_\alpha(x-z)|u(z)|^p|u(x)|^{p}\,dx\,dz\\
&=&\int_{\R^N}\int_{\R^N}\frac{z}{|z|}\frac{z-x}{|x-z|^2}I_\alpha(x-z)|u(z)|^p|u(x)|^{p}\,dx\,dz\\
%&=&\int_{\R^N}\int_{\R^N}|x|^b\frac{x-z}{|x-z|}I_\alpha'(|x-z|)|z|^b|u(z)|^p|u(x)|^{p}\,dx\,dz\\
&=&\frac12\int_{\R^N}\int_{\R^N}\frac{I_\alpha(x-z)}{|x-z|^2}|u(z)|^p|u(x)|^{p}(x-z)\Big(\frac x{|x|}-\frac{z}{|z|}\Big)\,dx\,dz.
\end{eqnarray*}
Then, $(D)\geq0$ because
$$(x-z)\Big(\frac{x}{|x|}-\frac{z}{|z|}\Big)=(|x||z|-xz)\Big(\frac{|x|+|z|}{|x||z|}\Big)\geq0.$$
The proof is closed.
\end{proof}
%%%%%%%%%%%%%%%%%%%%%%%%%%%%%%%%%%%%%%%%%%%%%%%%%%%%%%%%%%%%%%%%%%%%%%%%%%%%%%%%%%%%%%%%%%%%%%%%%%%%%%%%%%%%%%%%%%%%%%%
\subsection{Decay of global solutions}
%%%%%%%%%%%%%%%%%%%%%%%%%%%%%%%%%%%%%%%%%%%%%%%%%%%%%%%%%%%%%%%%%%%%%%%%%%%%%%%%%%%%%%
The goal of this subsection is to prove the long time decay of the energy global solutions to the defocusing Choquard problem \eqref{S}. Let us give an intermediate result.
\begin{lem}\label{dcyy2}
Take $N\geq5$, $0<\alpha<N<8+\alpha$, $2\leq p<p^*$. Let $\chi\in C_0^{\infty}(\R^N)$ to be a cut-off function and $(\varphi_n)$ be a sequence in $H^2$ satisfying $\displaystyle\sup_{n}\|\varphi_n\|_{H^2}<\infty$ and $\varphi_n\rightharpoonup\varphi$ in $H^2$. Let $u_n$ $($respectively $u )$ be the solution  in $C(\R,H^2)$ to \eqref{S} with initial data $\varphi_n$ $($respectively $\varphi )$. Then, for every $\varepsilon>0$, there exist $T_{\varepsilon}>0$ and $n_{\varepsilon}\in\N$ such that
 $$\|\chi(u_n-u)\|_{L^{\infty}_{T_{\varepsilon}}(L^2)}<\varepsilon ,\quad\forall n>n_{\varepsilon}.$$
\end{lem}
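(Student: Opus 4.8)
The plan is to exploit the fact that the difference $w_n := u_n - u$ solves an inhomogeneous fourth-order Schrödinger equation with zero data in some weak sense, and to control it on a short time interval via Strichartz estimates combined with the compact embedding $H^2 \hookrightarrow L^2_{loc}$. First I would set $w_n := u_n - u$, which by Duhamel's formula satisfies
$$w_n(t) = e^{it\Delta^2}(\varphi_n - \varphi) - i\int_0^t e^{i(t-s)\Delta^2}\big(\mathcal N[u_n](s) - \mathcal N[u](s)\big)\,ds,$$
where $\mathcal N[v] = (I_\alpha*|v|^p)|v|^{p-2}v$. The key observation is that the linear term $e^{it\Delta^2}(\varphi_n-\varphi)$ does not go to zero in $L^2$ (only weakly), but after multiplying by the compactly supported cutoff $\chi$ and restricting to a \emph{bounded} time interval $[0,T_\varepsilon]$, it does: since $\varphi_n - \varphi \rightharpoonup 0$ in $H^2$ and $\chi e^{it\Delta^2}$ is, for each fixed $t$, a compact operator $H^2 \to L^2$ (because $\chi$ has compact support and $H^2 \hookrightarrow\hookrightarrow L^2_{loc}$), one gets $\|\chi e^{it\Delta^2}(\varphi_n-\varphi)\|_{L^2} \to 0$ for each $t$; then a uniform-in-$t$ bound plus dominated convergence (or an Arzelà–Ascoli / equicontinuity argument on $[0,T]$, using that $t \mapsto e^{it\Delta^2}$ is strongly continuous) upgrades this to $\|\chi e^{it\Delta^2}(\varphi_n-\varphi)\|_{L^\infty_T(L^2)} \to 0$ as $n \to \infty$, for any fixed $T$.

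Next I would handle the nonlinear (Duhamel) term. Using Strichartz estimates (Proposition \ref{prop2}, item 1) together with the Hardy–Littlewood–Sobolev-type bound of Corollary \ref{lhs2} to estimate $\|\mathcal N[u_n] - \mathcal N[u]\|_{S'([0,T])}$ by $C(\|u_n\|,\|u\|)\, T^{\theta}\,\|w_n\|_{S([0,T])}$ for some power $\theta > 0$ coming from Hölder in time in the subcritical range $p_* < p < p^*$ (here the uniform bound $\sup_n \|\varphi_n\|_{H^2} < \infty$ and energy conservation give uniform Strichartz bounds on $u_n$, $u$ over bounded intervals, so the nonlinear difference is indeed controlled by $w_n$ with a constant independent of $n$). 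Absorbing the $\|w_n\|_{S([0,T])}$ term on the left for $T$ small enough (depending only on the uniform bounds, not on $n$), one obtains
$$\|w_n\|_{S([0,T_\varepsilon])} \lesssim \|e^{it\Delta^2}(\varphi_n-\varphi)\|_{S([0,T_\varepsilon])}.$$
The difficulty is that the right-hand side here is again only weakly small, not norm-small; so this estimate alone is not enough. To close, I would not try to make $\|w_n\|_{S}$ small, but rather feed the Strichartz control of $w_n$ back into the \emph{localized} Duhamel formula: $\|\chi w_n\|_{L^\infty_T(L^2)} \le \|\chi e^{it\Delta^2}(\varphi_n-\varphi)\|_{L^\infty_T(L^2)} + \|\int_0^t \chi e^{i(t-s)\Delta^2}(\mathcal N[u_n]-\mathcal N[u])\,ds\|_{L^\infty_T(L^2)}$, bound the second term by dropping $\chi$ and using Strichartz, getting $\lesssim T^{\theta}\|w_n\|_{S([0,T])} \lesssim T^\theta$ (uniformly in $n$ by the above). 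Then first pick $T = T_\varepsilon$ small so that $C T_\varepsilon^\theta < \varepsilon/2$, and \emph{afterwards} pick $n_\varepsilon$ using the first paragraph so that the linear localized term is $< \varepsilon/2$ for $n > n_\varepsilon$. This two-step order — fix $T$ from the uniform nonlinear bound, then fix $n$ from weak convergence of the localized linear flow — is the crux of the argument.

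The main obstacle I expect is making rigorous the compactness claim that $\chi e^{it\Delta^2}(\varphi_n - \varphi) \to 0$ in $L^2$ \emph{uniformly on the bounded interval} $[0,T]$, rather than just pointwise in $t$. The clean way is: $\{e^{it\Delta^2}(\varphi_n-\varphi) : n \in \N,\ t \in [0,T]\}$ is bounded in $H^2$, hence (by Rellich–Kondrachov on the support of $\chi$, or equivalently uniform $L^2$-equicontinuity of translates via the $H^2$ bound) the family $\{\chi e^{it\Delta^2}(\varphi_n-\varphi)\}$ is relatively compact in $L^2$; combined with pointwise-in-$t$ convergence to $0$ and a subsequence/diagonal argument, one rules out any $L^2$-bounded-away-from-zero subsequence. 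Alternatively one invokes the local smoothing / Kato smoothing for $e^{it\Delta^2}$ to gain extra integrability. All remaining ingredients — the Strichartz bounds, the HLS nonlinear estimate, the time-Hölder gain $T^\theta$ in the mass-supercritical, energy-subcritical window — are routine given the results already quoted in the excerpt.
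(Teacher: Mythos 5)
Your proposal is correct, and at the strategic level it coincides with the paper's proof: a short-time Duhamel/Strichartz estimate in the subcritical window producing a factor $T^{\theta}$ that is uniform in $n$, combined with Rellich compactness for the localized linear part, with exactly the quantifier order you emphasize (fix $T_\varepsilon$ first from the uniform nonlinear bound, then $n_\varepsilon$ from weak convergence). The execution differs in where the cutoff enters. The paper localizes the equation itself, setting $v_n:=\chi u_n$ and $w_n:=\chi(u_n-u)$, and runs Strichartz on the localized unknown; the price is the commutator $[\chi,\Delta^2]u_n$ (handled by quoting a computation from \cite{st1}), which is the source of the extra additive term $T$ in the paper's closing inequality $\|w_n\|_{S(0,T)}\lesssim\|\chi(\varphi_n-\varphi)\|+T+T^{1/\delta}\|w_n\|_{S(0,T)}$; in exchange the linear contribution is just $\|\chi(\varphi_n-\varphi)\|_{L^2}$, to which Rellich applies directly and for a single function rather than a time-parametrized family. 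You keep the equation un-localized and insert $\chi$ only in the final $L^\infty_T(L^2)$ bound; this avoids the commutator altogether (a genuine simplification, since $[\chi,\Delta^2]$ involves three derivatives of $u_n$, not directly controlled by the $H^2$ bound), but shifts the burden onto showing $\chi e^{it\Delta^2}(\varphi_n-\varphi)\to0$ in $L^2$ uniformly for $t\in[0,T]$. Your compactness-plus-equicontinuity sketch for that step is sound and can be made explicit via Plancherel: $\|(e^{i(t-s)\Delta^2}-1)\psi\|\le C|t-s|^{1/2}\|\Delta\psi\|$, so the family is uniformly H\"older-$\tfrac12$ in time with values in $L^2$, and pointwise-in-$t$ Rellich convergence upgrades to uniform convergence on $[0,T]$. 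Both routes rely on $\sup_n\|u_n\|_{L^\infty(\R,H^2)}<\infty$, which in this defocusing setting follows from the conservation laws and the Gagliardo--Nirenberg inequality, as you note.
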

\begin{proof}
Let $v_n:=\chi u_n$ and $v:=\chi u$. 
%Then, $v_n(0)=\chi\varphi_n$ and
%\begin{eqnarray*}
%i\dot v_n+\Delta^2 v_n
%&=&\Delta^2\chi u_n+2\nabla\Delta\chi\nabla u_n+2\Delta\chi\Delta u_n+4\nabla\chi\nabla\Delta u_n\\
%&+&2(\nabla\Delta\chi\nabla u_n+2\sum_i\nabla\partial_i\chi\nabla\partial_i u_n)-\chi |x|^{b}(I_\alpha*|u_n|^p)|u_n|^{p-2}u_n.
%\end{eqnarray*}
Denote $w_n:=v_n-v$ and $\mathcal N_u:=(I_\alpha*|u|^p)|u|^{p-2}u$. %Taking account of \cite{st1}, it is sufficient to estimate the term 
%$$\|\chi(\mathcal N_{u_n}-\mathcal N_{u})\|_{S'(0,T)}.$$
% By Strichartz estimate, one gets
%\begin{eqnarray}
%\|w_n\|_{S(0,T)}
%&\lesssim&\|\chi(\varphi_n-\varphi)\|+\|\Delta^2\chi z_n\|_{L_T^1(L^2)}+\|\Delta\chi\Delta z_n\|_{L_T^1(L^2)}+\|\nabla\Delta\chi\nabla z_n\|_{L_T^1(L^2)}\nonumber\\
%&+&\|\nabla\chi\nabla\Delta z_n\|_{L_T^1(L^2)}+\sum_i\|\nabla\partial_i\chi\nabla\partial_i z_n\|_{L_T^1(L^2)}+\|\chi(\mathcal N_{u_n}-\mathcal N_{u})\|_{S'(0,T)},\label{e002}
%\end{eqnarray}
%Thanks to Rellich Theorem,  up to sub-sequence extraction, when $n\to\infty$,
%\begin{equation}\label{e01}
%\epsilon_n:=\|\chi(\varphi_n-\varphi)\|\to0.
%\end{equation}
%Moreover, by H\"older inequality via integration by parts, it follows that
%\begin{eqnarray}
%(I)
%&:=&\|\Delta^2\chi z_n\|_{L_T^1(L^2)}+\|\Delta\chi\Delta z_n\|_{L_T^1(L^2)}+\|\nabla\Delta\chi\nabla z_n\|_{L_T^1(L^2)}+\|\nabla\chi\nabla\Delta z_n\|_{L_T^1(L^2)}+\|\nabla\partial_i\chi\nabla\partial_i z_n\|_{L_T^1(L^2)}\nonumber\\
%&\lesssim&\|z_n\|_{L_T^1(L^2)}+\|\Delta z_n\|_{L_T^1(L^2)}+\|\nabla z_n\|_{L_T^1(L^2)}+\|\nabla\partial_i z_n\|_{L_T^1(L^2)}\nonumber\\
%&\lesssim&NT\label{e02},
%\end{eqnarray}
%where 
%$$N:=\|u\|_{L^{\infty}(\R,H^2)}+\displaystyle\sup_{n}\|u_n\|_{L^{\infty}(\R,H^2)}.$$
%Now, let us estimate the non-linear term $\|\chi|x|^{2b}(f(u_n)-f(u))\|_{L_T^{q'}(L^{r'})}$. \\%We discuss two cases.\\
%\begin{enumerate}\item[1.]First case $0\notin supp(\chi).$ Using Strichartz estimate and Corollary \ref{cor}, the mean value Theorem via H\"older inequality, it follows that 
%Using Strichartz estimate, yields!!
Using Strichartz estimate and Corollary \ref{cor}, assuming that $supp(\chi)\subset\{|x|<1\}$, one has
\begin{eqnarray*}
\|\chi(\mathcal N_{u_n}-\mathcal N_{u})\|_{S'(0,T)}
&\lesssim& \|(I_\alpha*[|u_n|^p-|u|^p)]|u_n|^{p-2}v_n\|_{L^{q'}_T(L^{r'}(|x|<1))}\\
&+&\|(I_\alpha*|u|^p)(|u_n|^{p-2}v_n-|u|^{p-2}v)\|_{L^{q'}_T(L^{r'}(|x|<1))}\\
&\lesssim&(I)+(II),
\end{eqnarray*}
where $(q,r)\in\Gamma$. Take $r:=\frac{2Np}{\alpha+N}$. Then, $1+\frac\alpha N=\frac{2p}{r}$ and using H\"older and Hardy-Littlewood-Sobolev inequalities, one gets
\begin{eqnarray*}
(II)
&=&\|(I_\alpha*|u|^p)(|u_n|^{p-2}v_n-|u|^{p-2}v)\|_{L^{q'}_T(L^{r'}(|x|<1))}\\\\
&\lesssim&\|(I_\alpha*|u|^p)(|u_n|^{p-2}+|u|^{p-2})w_n\|_{L^{q'}_T(L^{r'}(|x|<1))}\\
%&\lesssim&\|(\|u_n\|_{r}^{2(p-1)}+\|u\|_{r}^{2(p-1)})\|w_n\|_{r}\|_{L^{q'}(0,T)}\\
&\lesssim&\|(\|u_n\|_{r}^{2(p-1)}+\|u\|_{r}^{2(p-1)})\|w_n\|_{r}\|_{L^{q'}(0,T)}.
\end{eqnarray*}
Because $2\leq p<p^*$, there exists $\delta>0$ such that  $\frac1{q'}=\frac{1}q+\frac1\delta$ and $2<r<\frac{2N}{N-4}$. Then, taking account of Sobolev embeddings and H\"older inequality, one obtains
\begin{eqnarray*}
(II)
&\lesssim&T^{\frac1\delta}\Big(\|u_n\|_{L_T^\infty(L^r)}^{2(p-1)}+\|u\|_{L_T^\infty(L^r)}^{2(p-1)}\Big)\|w_n\|_{S(0,T)}\\
&\lesssim&T^{\frac1\delta}\Big(\|u_n\|_{L_T^\infty(H^2)}^{2(p-1)}+\|u\|_{L_T^\infty(H^2)}^{2(p-1)}\Big)\|w_n\|_{S(0,T)}\\
&\lesssim&T^{\frac1\delta}\|w_n\|_{S(0,T)}.
\end{eqnarray*}
Similarly, one estimates $(I)$. Now, taking account of computation done in the proof of Lemma 2.2 in \cite{st1}, one gets
$$\|w_n\|_{S(0,T)}\lesssim \|\chi(\varphi_n-\varphi)\|+T+T^{\frac1\delta}\|w_n\|_{S(0,T)}.$$
The proof is achieved via Rellich Theorem.
\end{proof}
Now, let us prove the long time decay for global solutions to \eqref{S}.
\begin{proof}[Proof of Proposition \ref{dcy}]
By an interpolation argument, it is sufficient to establish the equality
$$\lim_{t\to\infty}\|u(t)\|_{2+\frac4N}=0.$$
Recall the localized Gagliardo-Nirenberg inequality \cite{st1},
$$\|u\|_{2+\frac4N}^{2+\frac4N}\lesssim \Big(\sup_{x\in\R^N}\|u\|_{L^2(Q_1(x))}\Big)^{1+\frac4N}\|u\|_{H^2}.$$
Here $Q_r(x)$ denotes the cubic in $\R^N$ with center $x$ and radius $r>0$. %With the conservation laws
%$$\sup_{t\in\R}\|u(t)\|_{H^2}<\infty.$$
One proceeds by contradiction. Assume that there exist a sequence $(t_n)$ of positive real numbers and $\varepsilon>0$ such that $\displaystyle\lim_{n\rightarrow\infty}t_n=\infty$ and
%\begin{equation}\label{but}
$$\|u(t_n)\|_{L^{2+\frac4N}}>\varepsilon,\quad\forall n\in\N.$$%\end{equation}
Thanks to the conservation laws and the localized Gagliardo-Nirenberg inequality above, there exist a sequence $(x_n)$ in $\R^N$ and a positive real number denoted also by $\varepsilon>0$ such that
\begin{equation}\label{..}
\|u(t_n)\|_{L^2(Q_1(x_n))}\geq\varepsilon,\quad\forall n\in\N.
\end{equation}
Following the paper \cite{st1}, via the previous Lemma, there exist $T,n_\epsilon>0$ such that $t_{n+1}-t_n>T$ for $n\geq n_{\varepsilon}$ and 
$$\|u(t)\|_{L^2(Q_2(x_n))}\geq\frac{\varepsilon}{4},\quad\forall t\in [t_n,t_n+T],\quad\forall n\geq n_{\varepsilon}.$$%
%Moreover, the radial setting via \eqref{..} implies that $(x_n)$ is bounded. 
Thanks to Morawetz estimate in Proposition \ref{cr}, one gets 
\begin{eqnarray*}
\|u_0\|_{H^2}
&\gtrsim&\int_\R\int_{\R^N}(I_\alpha*|u(t)|^p)|x|^{-1}|u(t,x)|^{p}\,dx\,dt\\
%&\gtrsim&\sum_n\int_{t_n}^{t_n+T}\int_{Q_2(x_n)}(I_\alpha*|u(t)|^p)|x|^{b-1}|u(t,x)|^{p}\,dx\,dt\\
&\gtrsim&\sum_n\int_{t_n}^{t_n+T}\int_{Q_2(x_n)}\int_{Q_2(x_n)}\frac{|x|^{-1}}{|x-y|^{N-\alpha}}|u(t,y)|^p|u(t,x)|^{p}\,dy\,dx\,dt.
\end{eqnarray*}
Now, with the radial assumption via the equation \eqref{..}, the sequence $(x_n)$ is bounded. Thus,
\begin{eqnarray*}
\|u_0\|_{H^2}
&\gtrsim&\sum_n\int_{t_n}^{t_n+T}\Big(\int_{Q_2(x_n)}|u(t,x)|^{p}\,dx\Big)^2\,dt\\
&\gtrsim&\sum_n\int_{t_n}^{t_n+T}\|u(t)\|_{L^2(Q_2(x_n))}^{2p}\,dt\\
&\gtrsim&\sum_n(\frac\varepsilon4)^{2p}T=\infty.
\end{eqnarray*}
This contradiction achieves the proof.
\end{proof}
%%%%%%%%%%%%%%%%%%%%%%%%%%%%%%%%%%%%%%%%%%%%%%%%%%%%%%%%%%%%%%%%%%%%%%%%%%%%%%%%%%%%%%%%%%%%%%%%%%%%%%%%%%%%%%%%%%%%%%%
\subsection{Scattering}
%%%%%%%%%%%%%%%%%%%%%%%%%%%%%%%%%%%%%%%%%%%%%%%%%%%%%%%%%%%%%%%%%%%%%%%%%%%%%%%%%%%%%%%%%%
This subsection is concerned with the proof of the scattering of energy global solutions to the defocusing Choquard problem \eqref{S}. Here and hereafter, one denotes the operator
%$$\|\cdot\|_{S(0,T)}:=\sup_{(q,r)\in\Gamma}\|\cdot\|_{L_T^q(L^r)}\quad\mbox{and}\quad 
$$\left\langle\cdot\right\rangle :=(1+\Delta) \cdot.$$
Let us give an intermediate result. 
\begin{lem}\label{twl2}
Let $N\geq5$, $0<\alpha<N<8+\alpha$ and $p_*< p<p^*$ such that $p\geq2$. Take $u\in C_{T}(H^2)$ be a local solution to \eqref{S}. Then, there exist $2<p_1,p_2<\frac{2N}{N-4}$ and $0<\theta_1,\theta_2<2(p-1)$ such that
$$\|\left\langle u-e^{i.\Delta^2}u_0\right\rangle\|_{S(0,T)}\lesssim\|u\|_{L^\infty_T(L^{p_1})}^{\theta_1}\|\left\langle u\right\rangle\|_{S(0,T)}^{2p-1-\theta_1}+\|u\|_{L^\infty_T(L^{p_2})}^{\theta_2}\|\left\langle u\right\rangle\|_{S(0,T)}^{2p-1-\theta_2}.$$
\end{lem}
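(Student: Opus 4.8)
The plan is to estimate the Duhamel term $u - e^{i\cdot\Delta^2}u_0 = \pm i\int_0^t e^{i(t-s)\Delta^2}\mathcal N(u(s))\,ds$ in the Strichartz space $S(0,T)$, and similarly $\Delta$ applied to it, by controlling the nonlinearity $\mathcal N_u = (I_\alpha*|u|^p)|u|^{p-2}u$ and $\langle\mathcal N_u\rangle$ in a suitable dual Strichartz space $S'(0,T)$ (or the weighted spaces appearing in Proposition \ref{prop2}(2)). The key algebraic observation is that $\mathcal N_u$ is $2p-1$-homogeneous in $u$: it is the product of the Riesz potential convolved with $|u|^p$ and the factor $|u|^{p-2}u$. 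Since $p_* < p < p^*$ means $s_c \in (0,1)$, the exponent $2p-1$ exceeds $1$, and we want to peel off a small power $\theta_i$ of $u$ measured in a subcritical Lebesgue norm $L^{p_i}$ with $2 < p_i < \frac{2N}{N-4}$ (an endpoint of the admissible range), while the remaining $2p-1-\theta_i$ powers, together with any derivative, are absorbed into $\|\langle u\rangle\|_{S(0,T)}$.

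The concrete steps I would carry out: First, fix an admissible pair $(q,r)$ with $r = \frac{2Np}{\alpha+N}$, so that by Corollary \ref{cor} (Hardy--Littlewood--Sobolev adapted to Choquard) one has $\|(I_\alpha*|f|^p)|g|^{p-2}g\|_{r'} \lesssim \|f\|_r^p \|g\|_r^{p-1}$; one checks $2 < r < \frac{2N}{N-4}$ holds precisely in the energy-subcritical range. Second, for the non-derivative part use Strichartz (Proposition \ref{prop2}(1)) to bound $\|u - e^{i\cdot\Delta^2}u_0\|_{S(0,T)} \lesssim \|\mathcal N_u\|_{L^{q'}_T(L^{r'})}$, then split the time norm via H\"older in $t$: since $p > p_*$ there is room to write $\frac{1}{q'} = \frac{1}{\delta} + \frac{2p-1}{q}$ for some $\delta > 0$ — but actually the cleaner route is to interpolate in $x$, writing $\|u\|_r \lesssim \|u\|_{p_1}^{1-\sigma}\|u\|_{r_*}^{\sigma}$ for an admissible $r_*$, pulling the $L^\infty_T L^{p_1}$ piece out and leaving $S$-norm factors, yielding exponents $\theta_1 = (2p-1)(1-\sigma) < 2p-1$. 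Third, for the $\Delta$-derivative part use Proposition \ref{prop2}(2), distributing one derivative onto $\mathcal N_u$ via the product/chain rule — hitting either the convolution factor (which again reduces to a Choquard HLS estimate on $|u|^{p-1}\nabla u$) or the $|u|^{p-2}u$ factor — and argue analogously, producing a possibly different pair $(p_2,\theta_2)$. Finally, combine the two contributions, noting $\|\langle w\rangle\|_S \sim \|w\|_S + \|\Delta w\|_S$, to obtain the stated inequality with $0 < \theta_i < 2(p-1)$ (the upper bound $2(p-1)$ rather than $2p-1$ reflects that at least one power of $u$ or one derivative must remain in the $S$-norm to carry the derivative or the dual-exponent balance).

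I expect the main obstacle to be the bookkeeping of exponents: one must simultaneously (i) land the convolution estimate on an admissible Lebesgue pair, (ii) keep $p_i$ strictly between $2$ and $\frac{2N}{N-4}$ so that Sobolev embedding $H^2 \hookrightarrow L^{p_i}$ and hence $L^\infty_T H^2 \hookrightarrow L^\infty_T L^{p_i}$ is available, and (iii) ensure the H\"older-in-time exponent $\delta$ is finite and positive, which is exactly where the strict mass-supercriticality $p > p_*$ is consumed. The derivative term is the more delicate of the two, because placing the derivative on the nonlocal factor requires a variant of Corollary \ref{cor} with one function replaced by $\nabla(|u|^p) \sim |u|^{p-1}|\nabla u|$, and one must check that the resulting Lebesgue exponents still close up — I would handle this by choosing the admissible pair in Proposition \ref{prop2}(2) to match the weighted space $L^2_T \dot W^{1,\frac{2N}{2+N}}$ and running HLS with that target. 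Everything else is routine Strichartz/H\"older manipulation once the exponents are pinned down.
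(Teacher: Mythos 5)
Your plan matches the paper's proof in all essentials: Strichartz reduces the problem to bounding $(I_\alpha*|u|^p)|u|^{p-2}u$ in $S'(0,T)$ and its gradient in $L^2_T(L^{\frac{2N}{2+N}})$, the derivative is distributed by the product rule onto either the local factor or the convolution factor (the latter via the HLS variant you describe), and H\"older--HLS--Sobolev with a single exponent $r_1$ fixed by $\frac{2p-1}{r_1}=\frac{4+2\alpha+N}{2N}$ lets one peel off $\theta_1$ powers in $L^\infty_T(L^{r_1})$ while the rest goes into $L^{q_1}_T(W^{2,r_1})$ with $(q_1,r_1)$ admissible and $q_1=2(2p-1-\theta_1)$, the constraint $0<\theta_1<2(p-1)$ being exactly the condition $p_*<p<p^*$, as you anticipated. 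The only cosmetic difference is that the paper does not need your interpolation in $x$ between $L^{p_1}$ and an admissible exponent, since the peeled-off factor is measured in the same $L^{r_1}$ norm as the retained ones; otherwise the bookkeeping you outline is precisely what the paper carries out.
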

\begin{proof}
With Duhamel formula and Strichartz estimates, one writes
{\begin{eqnarray*}
%(A)&:=&
&&\|\left\langle u-e^{i.\Delta^2}u_0\right\rangle\|_{S(0,T)}\\
&\lesssim&\|(I_\alpha*|u|^p)|u|^{p-2}u\|_{S_T'(|x|<1)}+\|\nabla((I_\alpha*|u|^p)|u|^{p-2}u)\|_{L_T^2(L^\frac{2N}{2+N}(|x|<1))}\\
%\|(I_\alpha*|u|^p)|u|^{p-2}u\|_{S_T'(|x|>1)}\\&+&\|\nabla((I_\alpha*|u|^p)|u|^{p-2}u)\|_{L_T^2(L^\frac{2N}{2+N}(|x|<1))}+\|\nabla((I_\alpha*|u|^p)|u|^{p-2}u)\|_{L_T^2(L^\frac{2N}{2+N}(|x|>1))}\\
%&\lesssim&\|[I_\alpha*|u|^p]|u|^{p-1}\|_{S_T'(|x|<1)}+\|\nabla\Big([I_\alpha*|u|^p]|u|^{p-2}u\Big)\|_{S_T'(|x|<1)}\\
%&+&\|[I_\alpha*|u|^p]|u|^{p-1}\|_{S_T'(|x|>1)}+\|\nabla\Big([I_\alpha*|u|^p]|u|^{p-2}u\Big)\|_{S_T'(|x|>1)}\\
&:=&(A)+(B).
\end{eqnarray*}}
Now, let us deal with the quantity $(B)$.
\begin{eqnarray*}
(B)
&:=&\|\nabla((I_\alpha*|u|^p)|u|^{p-2}u)\|_{L^{2}_T(L^{\frac{2N}{2+N}})}\\
&\lesssim&\|(I_\alpha*|u|^p)|u|^{p-2}\nabla u\|_{L^{2}_T(L^{\frac{2N}{2+N}})}+\|(I_\alpha*|u|^{p-1}\nabla u)|u|^{p-1}\|_{L^{2}_T(L^{\frac{2N}{2+N}})}\\
&\lesssim& (B_1)+(B_2).
\end{eqnarray*}
Thanks to H\"older, Hardy-Littlewood-Sobolev and Sobolev inequalities, one has
\begin{eqnarray*}
(B_1)
&:=&\|(I_\alpha*|u|^p)|u|^{p-2}\nabla u\|_{L^{2}_T(L^{\frac{2N}{2+N}})}\\
&\lesssim&\|\|u\|_{r_1}^{2(p-1)}\|\Delta u\|_{a_1}\|_{L^{2}(0,T)}\\
&\lesssim&\|u\|_{L^\infty_T(L^{r_1})}^{\theta_1}\|\|u\|_{r_1}^{2(p-1)-\theta_1}\|\Delta u\|_{r_1}\|_{L^{2}(0,T)}\\
&\lesssim&\|u\|_{L^\infty_T(L^{r_1})}^{\theta_1}\|\|u\|_{W^{2,r_1}}^{2p-1-\theta_1}\|_{L^{2}(0,T)}\\
&\lesssim&\|u\|_{L^\infty_T(L^{r_1})}^{\theta_1}\|u\|_{L^{q_1}_T(W^{2,r_1})}^{2p-1-\theta_1}.
\end{eqnarray*}
Here $q_1:=2(2p-1-\theta_1)$, $(q_1,r_1)\in\Gamma$, $\frac1{a_1}=\frac1{r_1}-\frac1N$ and
\begin{gather*}
\frac{4+2\alpha+N}{2N}-\frac{2p-1}{r_1}
=0;\\
N(\frac12-\frac1{r_1})=\frac4{q_1}=\frac2{2p-1-\theta_1}.
\end{gather*}
%Thus,
%\begin{gather*}
%\frac{2p-1}{r_1}<\frac{4+2\alpha+4b+N}{2N},\quad r_1:=\frac{2N(2p-1)\alpha}{4+2\alpha+4b+N},\quad\alpha=1^+;\\
%N(\frac12-\frac1{r_1})=\frac4{q_1}=\frac2{2p-1-\theta_1}.\end{gather*}
A computation gives that the condition $\theta_1\in(0,2(p-1))$ is equivalent to
$$2<\frac{8(2p-1)}{N(2p-1)-(4+2\alpha+N)}<2(2p-1).$$
This is satisfied because $p_*<p<p^*$. The second term is controlled similarly. The estimate of $(A)$ follows as $(B_1)$. This finishes the proof.
\end{proof}
Now, let us prove the main result of this section.
\begin{proof}[Proof of Theorem \ref{sctr}]
Taking account of Lemma \ref{twl2} via the decay of solutions and the absorption Lemma \ref{abs}, one gets 
$$\left\langle u\right\rangle\in S(\R):=\cap_{(q,r)\in\Gamma}{L^q(\R,L^r(\R^N))}.$$
This implies that, via Strichartz estimate and the proof of the previous Lemma, that when $s,t\to\infty$,
\begin{eqnarray*}
\|e^{-it\Delta^2}u(t)-e^{-is\Delta^2}u(s)\|_{H^2}
&\lesssim& \|(I_\alpha*|u|^p)|u|^{p-2}u\|_{L^2((t,s),W^{1,\frac{2N}{2+N}})}\\
&\lesssim&\|u\|_{L^\infty(\R,L^{q_1})}^{\theta_1}\|\left\langle u\right\rangle\|_{S(s,t)}^{2p-1-\theta_1}+\|u\|_{L^\infty(\R,L^{q_2})}^{\theta_2}\|\left\langle u\right\rangle\|_{S(s,t)}^{2p-1-\theta_2}\\
&&\to0.
\end{eqnarray*}
Take $u_\pm:=\lim_{t\to\pm\infty}e^{-it\Delta^2}u(t)$ in $H^2$. Thus,
$$\lim_{t\to\pm\infty}\|u(t)-e^{it\Delta^2}u_\pm\|_{H^2}=0.$$
The scattering is proved.
\end{proof}
%%%%%%%%%%%%%%%%%%%%%%%%%%%%%%%%%%%%%%%%%%%%%%%%%%%%%%%%%%%%%%%%%%%%%%%%%%%
%%%%%%%%%%%%%%%%%%%%%%%%%%%%%%%%%%%%%%%%%%%%%%%%%%%%%%%%%%%%%%%%%%%%%%%%%%%%%%%%%%%%%%%%%%%%%%%%%%%%%%%%%%%%%%%%%%%%%%%%%%%%%%%%%%%%%%%%%%%%%%%%%%%%%%%%%%%%%%%%%%%%%
\section{The focusing regime $\epsilon=-1$}
%%%%%%%%%%%%%%%%%%%%%%%%%%%%%%%%%%%%%%%%%%%%%%%%%%%%%%%%%%%%%%%%%%%%%%%%%%%%%%%%%%%%%%%%%%%%%%%%%%%%%%
This section deals with the focusing sign. Thus, one takes $\epsilon=-1$. Moreover, here and hereafter one denotes the real numbers
$$a:=\frac{4p(p-1)}{2p-B},\quad r:=\frac{2Np}{\alpha+N}.$$
Take also $T>0$ and the time slab $I:=(T,\infty)$.
%%%%%%%%%%%%%%%%%%%%%%%%%%%%%%%%%%%%%%%%%%%%%%%%%%%%%%%%%%%%%%%%%%%%%%%%%%%%%%%%%%%%%%%%%%%%%%%%%%%%%%%%%%%%%%%%%%%%%%%%%%%%%%%%%%%%%%%%%%%%%%%%%%%%%%%%%%%%%%%%%%%%%%
\subsection{Small data theory}
%%%%%%%%%%%%%%%%%%%%%%%%%%%%%%%%%%%%%%%%%%%%%%%%%%%%%%%%%%%%%%%%%%%%%%%%%%%%%%%%%%%%%%%%%%%%%%%%%%%%%%
Let us start with a global existence and scattering result for small data.
\begin{lem}\label{sdt}
Let $N\geq5$, $4<4+\alpha<N<8+\alpha$ and $p_*< p<p^*$ such that $p\geq2$. Let $A> 0$ such
that $\|u(T)\|_{H^2}< A$. Then, there exists $\delta:=\delta(A) > 0$ such that if
$$\|e^{i(\cdot-T)\Delta^2} u(T)\|_{L^a((T,\infty),L^r)}<\delta,$$
 the solution to \eqref{S} exists globally in time and satisfies
\begin{gather*}
\|u\|_{L^a((T,\infty),L^r)}<2\|e^{i\cdot\Delta^2} u(T)\|_{L^a((T,\infty),L^r)};\\
\|\left\langle u\right\rangle\|_{S(T,\infty)}< 2C\|u(T)\|_{H^2}.
\end{gather*}
Moreover, if $\|u\|_{L^\infty(\R,H^2)}<A$, then $u$ scatters.
\end{lem}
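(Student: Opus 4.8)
\textbf{Proof plan for Lemma \ref{sdt}.}
The plan is to run a standard contraction/continuity argument in the Strichartz space $S(T,\infty)$ together with the auxiliary norm $\|u\|_{L^a(I,L^r)}$, using the inhomogeneous Strichartz estimates of Proposition \ref{prop2} and the Hardy--Littlewood--Sobolev inequality of Corollary \ref{cor} to control the Choquard nonlinearity. First I would set up the Duhamel formulation $u(t)=e^{i(t-T)\Delta^2}u(T)-i\int_T^t e^{i(t-s)\Delta^2}\big((I_\alpha*|u|^p)|u|^{p-2}u\big)(s)\,ds$ and define the iteration space $X$ as the set of $u$ on $I=(T,\infty)$ with $\|\langle u\rangle\|_{S(I)}\le 2C A$ and $\|u\|_{L^a(I,L^r)}\le 2\delta$. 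The main analytic input is a nonlinear estimate of the form
$$\big\|\langle (I_\alpha*|u|^p)|u|^{p-2}u\rangle\big\|_{S'(I)}\lesssim \|u\|_{L^a(I,L^r)}^{2p-2}\,\|\langle u\rangle\|_{S(I)},$$
which is exactly the structure already extracted in the proof of Lemma \ref{twl2} (the exponents $a$ and $r=\frac{2Np}{\alpha+N}$ are chosen so that $1+\frac\alpha N=\frac{2p}{r}$ and so that $a$ matches the admissible time exponent via the relations displayed there); here the $L^\infty_T(L^{p_i})$ factors of Lemma \ref{twl2} get replaced by the small $L^a_I(L^r)$ norm since on the unbounded interval $I$ we must put all but one copy of $u$ into the small-data norm rather than into $L^\infty_T(L^{p_i})$.

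Granting that estimate, I would close the argument as follows. By the inhomogeneous Strichartz estimate (parts 1 and 2 of Proposition \ref{prop2}) applied to the Duhamel term, for $u\in X$,
$$\|\langle u\rangle\|_{S(I)}\le C\|u(T)\|_{H^2}+C\,\|u\|_{L^a(I,L^r)}^{2p-2}\|\langle u\rangle\|_{S(I)}\le CA+C(2\delta)^{2p-2}(2CA),$$
and similarly, using part 3 of Proposition \ref{prop2} with the pair $(a,r)$,
$$\|u\|_{L^a(I,L^r)}\le \|e^{i(\cdot-T)\Delta^2}u(T)\|_{L^a(I,L^r)}+C\,\|u\|_{L^a(I,L^r)}^{2p-2}\|\langle u\rangle\|_{S(I)}\le \delta+C(2\delta)^{2p-2}(2CA).$$
Choosing $\delta=\delta(A)$ small enough that $C(2\delta)^{2p-2}(2CA)\le CA$ and $\le \delta$ makes $X$ invariant, and shrinking $\delta$ further gives a contraction in the metric $d(u,v)=\|u-v\|_{L^a(I,L^r)}+\|\langle u-v\rangle\|_{S(I)}$ via the same multilinear estimate applied to differences (using $|\,|u|^{p-2}u-|v|^{p-2}v\,|\lesssim(|u|^{p-2}+|v|^{p-2})|u-v|$ and the analogous bound inside the Riesz potential). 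This yields a unique fixed point on $I=(T,\infty)$, i.e. a global-in-forward-time solution, and the two displayed bounds are read off from the fixed-point estimates; the same argument run backward in time (or the local theory of \cite{st0} together with a continuity/bootstrap argument) upgrades this to a global solution. For the scattering claim, under the extra hypothesis $\|u\|_{L^\infty(\R,H^2)}<A$ the above shows $\|\langle u\rangle\|_{S(\R)}<\infty$, whence $e^{-it\Delta^2}u(t)$ is Cauchy in $H^2$ as $t\to\pm\infty$ by Strichartz applied to the Duhamel tail, exactly as in the proof of Theorem \ref{sctr}; defining $\psi=\lim_{t\to\infty}e^{-it\Delta^2}u(t)$ gives the asymptotic completeness statement.

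The step I expect to be the main obstacle is the nonlinear Strichartz estimate itself, and in particular verifying that the exponents can be chosen consistently: one needs an admissible pair carrying the $2p-1$ copies of $u$ (two ``inside'' the potential via Hardy--Littlewood--Sobolev at the exponent $r=\frac{2Np}{\alpha+N}$, and $2p-2$ factors placed in $L^a_I L^r$ with the last factor, carrying the derivative $\langle\cdot\rangle$, in a dual-admissible Strichartz space), with the time integrability working out to the dual admissible exponent on the \emph{unbounded} interval $I$ — this is where, unlike in Lemma \ref{twl2}, no $T^{1/\delta}$ gain is available and one genuinely needs the $L^a_I L^r$ smallness to absorb the nonlinearity. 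Checking that $p_*<p<p^*$ together with the dimensional restriction $4+\alpha<N<8+\alpha$ guarantees $2<r<\frac{2N}{N-4}$ and that all H\"older/Sobolev exponents lie in the admissible ranges is the technical heart; once the scaling identities (mirroring those displayed after Lemma \ref{twl2}) are checked, the rest is the routine fixed-point bookkeeping sketched above.
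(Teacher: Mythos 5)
Your plan coincides with the paper's proof: the author runs exactly this Picard fixed-point argument in the space $X_{T,R,R'}=\{\|\langle u\rangle\|_{S(I)}\le R,\ \|u\|_{L^a(I,L^r)}\le R'\}$ with the metric $\|u-v\|_{S(I)\cap L^a(I,L^r)}$, establishes the same multilinear bound $\lesssim R'^{2(p-1)}d(u,v)$ by Hardy--Littlewood--Sobolev and H\"older at the exponents $(q,r)=(\tfrac{4p}{B},\tfrac{2Np}{\alpha+N})$ (with the auxiliary exponent $m$ satisfying $\tfrac1a+\tfrac1m=\tfrac2q$ for the $L^a(I,L^r)$ component, i.e.\ part 3 of Proposition \ref{prop2} as you indicate), and derives scattering from the Cauchy property of $e^{-it\Delta^2}u(t)$ just as you describe. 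No substantive differences.
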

\begin{proof}
Define the function
$$\phi(u):=e^{i(\cdot-T)\Delta^2}u(T)-i\int_T^\cdot e^{i(\cdot-s)\Delta^2}[(I_\alpha*|u|^p)|u|^{p-2}u]\,dx.$$
Let, for $T,R,R'>0$, the space
$$X_{T,R,R'}:=\{\left\langle u\right\rangle\in L^q(I,L^r),\quad \|\left\langle u\right\rangle\|_{S(I) }\leq R,\quad\|u\|_{L^a(I,L^r)}\leq R'\},$$
endowed with the complete distance
$$d(u,v):=\|u-v\|_{S(I)\cap L^a(I,L^r)}.$$
Take the admissible pairs
\begin{gather*}
(q,r):=\Big(\frac{4p}{B},\frac{2Np}{\alpha+N}\Big);\\
(q_1,r_1):=\Big(\frac{4p}{(N-2)p-(\alpha+N)},\frac{2Np}{2(\alpha+N)-p(N-4)}\Big).
\end{gather*}
With Strichartz and Hardy-Littlewood-Sobolev estimates
\begin{eqnarray*}
\|\phi(u)-\phi(v)\|_{S(I)}
&\lesssim&\|(I_\alpha*|u|^p)|u|^{p-2}u-(I_\alpha*|v|^p)|v|^{p-2}v\|_{L^{q'}((T,\infty),L^{r'})}\\
&\lesssim&\|(I_\alpha*|u|^p)[|u|^{p-2}+|v|^{p-2}](u-v)\|_{L^{q'}((T,\infty),L^{r'})}\\
&+&\||v|^{p-1}[I_\alpha*(|u|^p-|v|^p)]\|_{L^{q'}((T,\infty),L^{r'})}\\
&\lesssim&\|u\|_{L^a((T,\infty),L^{r})}^p[\|u\|_{L^a((T,\infty),L^{r})}^{p-2}+\|v\|_{L^a((T,\infty),L^{r})}^{p-2}]\|u-v\|_{L^q((T,\infty),L^{r})}\\
&+&\|v\|_{L^a((T,\infty),L^{r})}^{p-1}\sum_{k=0}^{p-1}\|u\|_{L^a((T,\infty),L^{r})}^k\|v\|_{L^a((T,\infty),L^{r})}^{p-k-1}\|u-v\|_{L^q((T,\infty),L^{r})}\\
&\lesssim&R'^{2(p-1)}d(u,v).
\end{eqnarray*}
Take the real number satisfying $\frac1a+\frac1m=\frac2q$, 
$$m:=\frac{4p(p-1)}{2p(B-1)-B}.$$
With Strichartz and Hardy-Littlewood-Sobolev estimates, via the identity $1=\frac2q+\frac{2(p-1)}a$,
\begin{eqnarray*}
\|\phi(u)-\phi(v)\|_{L^a(I,L^r)}
&\lesssim&\|(I_\alpha*|u|^p)|u|^{p-2}u-(I_\alpha*|v|^p)|v|^{p-2}v\|_{L^{m'}((T,\infty),L^{r'})}\\
&\lesssim&\|(I_\alpha*|u|^p)[|u|^{p-2}+|v|^{p-2}](u-v)\|_{L^{m'}((T,\infty),L^{r'})}\\
&+&\||v|^{p-1}[I_\alpha*(|u|^p-|v|^p)]\|_{L^{m'}((T,\infty),L^{r'})}\\
&\lesssim&(\|u\|_{L^a((T,\infty),L^r)}^{2(p-1)}+\|u\|_{L^a((T,\infty),L^r)}^{2(p-1)})\|u-v\|_{L^a((T,\infty),L^{r})}\\
&\lesssim&R'^{2(p-1)}\|u-v\|_{L^a((T,\infty),L^{r})}.
\end{eqnarray*}
Then,
$$d(\phi(u),\phi(v))\leq CR'^{2(p-1)}d(u,v).$$
Let us prove that the space $X_{T,R,R'}$ is stable under the above function. 
Taking $v=0$ in the previous computation, one gets
\begin{eqnarray*}
\|\phi(u)\|_{L^a(I,L^r)}
&\lesssim&\|e^{i(\cdot-T)\Delta^2}u(T)\|_{L^a(I,L^r)}+R'^{2p-1}.
\end{eqnarray*}
Thanks to Strichartz estimate
\begin{eqnarray*}
\|\left\langle \phi(u)\right\rangle\|_{S(I)}
&\lesssim&\|u(T)\|_{H^2}+\|(I_\alpha*|u|^p)|u|^{p-2}u\|_{L^{q'}(I,L^{r'})}+\|\nabla[(I_\alpha*|u|^p)|u|^{p-2}u]\|_{L^{2}(I,L^{\frac{2N}{2+N}})}\\
&\lesssim&\|u(T)\|_{H^2}+\|u\|_{L^a(I,L^r)}^{2(p-1)}\|u\|_{L^q(I,L^r)}+\|\nabla[(I_\alpha*|u|^p)|u|^{p-2}u]\|_{L^{2}(I,L^{\frac{2N}{2+N}})}\\
&\lesssim&\|u(T)\|_{H^2}+R'^{2(p-1)}R+\|\nabla[(I_\alpha*|u|^p)|u|^{p-2}u]\|_{L^{2}(I,L^{\frac{2N}{2+N}})}.
\end{eqnarray*}
Moreover,
\begin{eqnarray*}
%(A)&:=&
&&\|\nabla[(I_\alpha*|u|^p)|u|^{p-2}u]\|_{L^{2}(I,L^{\frac{2N}{2+N}})}\\
&\leq&\|(I_\alpha*|u|^p)|u|^{p-2}\nabla u\|_{L^{2}(I,L^{\frac{2N}{2+N}})}+\|(I_\alpha*\nabla(|u|^p))|u|^{p-1}]\|_{L^{2}(I,L^{\frac{2N}{2+N}})}\\
&\leq&(A_1)+(A_2).
\end{eqnarray*}
Moreover, by H\"older and Hardy-Littlewood-Sobolev inequalities via Sobolev injection and the identities
\begin{gather*}
\frac\alpha N+\frac{2+N}{2N}=\frac{2(p-1)}r+\frac1{r_1}-\frac1N;\\
\frac12=\frac{2(p-1)}a+\frac1{q_1},
\end{gather*}
one has
\begin{eqnarray*}
(A_1)+(A_2)
&\lesssim&\|\|u\|_r^{2(p-1)}\|\nabla u\|_{\frac{r_1N}{N-r_1}}\|_{L^2(I)}\\
&\lesssim&\|\|u\|_r^{2(p-1)}\|\Delta u\|_{r_1}\|_{L^2(I)}\\
&\lesssim&\|u\|_{L^a(I,L^r)}^{2(p-1)}\|\Delta u\|_{L^{q_1}(I,L^{r_1})}\\
&\lesssim&R'^{2(p-1)}R.
\end{eqnarray*}
Thus,
\begin{eqnarray*}
\|\left\langle \phi(u)\right\rangle\|_{S(I)}
&\lesssim&\|u(T)\|_{H^2}+R'^{2(p-1)}R.
\end{eqnarray*}
Taking $R'=2\|e^{i(\cdot-T)\Delta^2}u(T)\|_{L^a(I,L^r)}<<1$ and $R=2C\|u(T)\|_{H^2}$, the proof of the first part follows with Picard fixed point Theorem. Now, let us prove the scattering. Take $v(t):=e^{-it\Delta^2}u(t)$ and $0<t_1<t_2<\infty$. With the integral formula, one has
\begin{eqnarray*}
\|v(t_1)-v(t_2)\|_{H^2}
&=&\|\int_{t_1}^{t_2}e^{-is\Delta^2}[(I_\alpha*|u|^p)|u|^{p-2}u]\,ds\|_{H^2}\\
&\lesssim&\|u\|_{L^a((t_1,t_2),L^r)}^{2(p-1)}\|\Delta u\|_{S(t_1,t_2)}\to0.
\end{eqnarray*}
Take $u_\pm:=\lim_{t\to\pm\infty}v(t)$ in $H^2$. Then,
$$\|u(t)-e^{it\Delta^2}u_\pm\|_{H^2}\to0.$$
The proof is complete.
\end{proof}
%%%%%%%%%%%%%%%%%%%%%%%%%%%%%%%%%%%%%%%%%%%%%%%%%%%%%%%%%%%%%%%%%%%%%%%%%%%%%%%%%%%%%%%%%%%%%%%%%%%%%%%%%%%%%%%%%%%%%%%%%%%%%%%%%%%%%%%%%%%%%%%%%%%%%%%%%%%%%%%%%%%%%%
\subsection{Variational Analysis}
%%%%%%%%%%%%%%%%%%%%%%%%%%%%%%%%%%%%%%%%%%%%%%%%%%%%%%%%%%%%%%%%%%%%%%%%%%%%%%%%%%%%%%%%%%%%%%%%%%%%%%
In this section, one collects some estimates needed in the proof of the scattering of global solutions to the focusing Choquard problem \eqref{S}. Take a radial smooth function $0\leq\psi\leq1$ satisfying for $R>0$,
$$\psi\in C_0^\infty(\R^N),\quad supp(\psi)\subset \{|x|<1\}, \quad\psi=1\,\,\mbox{on}\,\,\{|x|<\frac12\},\quad\psi_R:=\psi(\frac\cdot R).$$
\begin{lem}\label{bnd}
Take $N\geq1$, $0<\alpha<N<8+\alpha$ and $p_*<p<p^*$ such that $p\geq2$ and $u_0\in H^2$ satisfying 
$$\max\{\mathcal M\mathcal E(u_0),{\mathcal M\mathcal G}\mathcal M(u_0)\}<1.$$
Then, there exists $\delta>0$ such that the solution $u\in C(\R,H^2)$ satisfies
$$\max\{\sup_{t\in\R}\mathcal M\mathcal E(u(t)),\sup_{t\in\R}{\mathcal M\mathcal G}\mathcal M(u(t))\}<1-\delta.$$
\end{lem}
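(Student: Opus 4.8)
The plan is to use a standard continuity (bootstrap) argument based on the conservation laws and a sharp Gagliardo--Nirenberg-type control of the sub-critical quantity. First I would observe that the quantities $\mathcal{ME}[u]$ and $\mathcal{MG}\mathcal{M}[u(t)]$ are constructed to be scale-invariant, and that $\mathcal{ME}[u(t)]=\mathcal{ME}[u_0]$ is \emph{conserved} because both $E$ and $M$ are conserved. So the real content is the statement about $\mathcal{MG}\mathcal{M}[u(t)]$, i.e.\ a uniform-in-time bound on $\|\Delta u(t)\|^{s_c}\|u(t)\|^{2-s_c}$, equivalently on $\|\Delta u(t)\|$ since the mass is conserved. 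The point is that the hypothesis $\mathcal{ME}[u_0]<1$ together with $\mathcal{MG}\mathcal{M}[u_0]<1$ forces $\mathcal{MG}\mathcal{M}[u(t)]$ to stay strictly below $1$ for all time.

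\medskip

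The key mechanism is the following. Using the sharp Gagliardo--Nirenberg inequality \eqref{ineq} together with the identity in Proposition \ref{gag}(3) expressing $C(N,p,\alpha)$ in terms of $\|\phi\|$, one rewrites the energy $E[u(t)]=\|\Delta u(t)\|^2-\tfrac1p\int(I_\alpha*|u(t)|^p)|u(t)|^p\,dx$ as $E[u(t)]\ge \|\Delta u(t)\|^2 - c\,\|u(t)\|^A\|\Delta u(t)\|^B$ with the optimal constant $c$. Multiplying through by appropriate powers of the conserved mass and normalising by the ground-state quantities $E[\phi],\|\Delta\phi\|,\ldots$, this turns into a scalar inequality of the form
$$\mathcal{ME}[u_0]\ \ge\ f\big(\mathcal{MG}\mathcal{M}[u(t)]\big),$$
where $f(y)=y^{2/s_c}-\tfrac{B}{2p}\,y^{2p/(B\,?)}$ — more precisely, $f(y)=y^{\,2/s_c}-\tfrac{s_c\,?}{\,?}y^{\,2(p-1)/s_c+2/s_c}$, a function which vanishes at $y=0$, is positive and strictly increasing on $[0,1]$, and satisfies $f(1)=1$ (the equality case being exactly the ground state). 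The $B<2p$ condition, equivalent to $s_c<2$, guarantees that the exponent on the $\|\Delta u\|$-term in the Gagliardo--Nirenberg bound is strictly larger than $2$, so $f$ has a genuine local max at $y=1$ and is increasing to the left of it. Since $\mathcal{ME}[u_0]<1=f(1)$, the set $\{y\ge 0: f(y)\le \mathcal{ME}[u_0]\}$ has a gap around $y=1$: there is $\delta>0$ so that $f(y)\le \mathcal{ME}[u_0]$ together with $y<1$ forces $y\le 1-\delta$.

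\medskip

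The proof is then completed by continuity in $t$. The map $t\mapsto \mathcal{MG}\mathcal{M}[u(t)]$ is continuous on $\R$ (since $u\in C(\R,H^2)$), it starts below $1$ at $t=0$ by hypothesis, and it can never \emph{equal} $1$: at such a time the scalar inequality would give $\mathcal{ME}[u_0]\ge f(1)=1$, contradicting \eqref{ss}. By the intermediate value theorem it therefore stays in $[0,1)$ for all $t$, hence — by the gap just described — in $[0,1-\delta]$ for all $t$, with $\delta$ depending only on $\mathcal{ME}[u_0]$, $N$, $p$, $\alpha$. Combined with the conservation $\mathcal{ME}[u(t)]=\mathcal{ME}[u_0]<1-\delta$ (shrinking $\delta$ if necessary), this yields the claimed uniform bound, and in particular $\sup_{t}\|\Delta u(t)\|<\infty$, so the solution is global.

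\medskip

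\emph{Main obstacle.} The only delicate point is bookkeeping: getting the exact scalar function $f$ right, i.e.\ checking that the powers of mass and the ground-state normalisations combine so that the Gagliardo--Nirenberg estimate really produces the clean inequality $\mathcal{ME}[u_0]\ge f(\mathcal{MG}\mathcal{M}[u(t)])$ with $f(1)=1$ and $f$ increasing on $[0,1]$. This is where Proposition \ref{gag}(3) — the identification of the sharp constant with $\|\phi\|$ — is essential, and where the hypotheses $p>p_*$ (so $s_c>0$, the quantities are genuinely interpolating) and $s_c<2$ (so $B<2p$) are used. Everything after that is the soft continuity argument, which is identical in spirit to the Holmer--Roudenko / Kenig--Merle dichotomy invoked in Proposition \ref{Blow-up}.
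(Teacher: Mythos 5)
Your plan is correct and follows essentially the same route as the paper: sharp Gagliardo--Nirenberg with the constant expressed via the ground state (Proposition \ref{gag}(3)) plus the Pohozaev identities $E(\phi)=\frac{B-2}{B}\|\Delta\phi\|^2=\frac{B-2}{A}\|\phi\|^2$ yield the scalar inequality $1-\delta>f(X(t))$ with $f(x)=\frac{B}{B-2}x^2-\frac{2}{B-2}x^B$ and $X(t)=\Big(\frac{\|u_0\|^{2-s_c}\|\Delta u(t)\|^{s_c}}{\|\phi\|^{2-s_c}\|\Delta\phi\|^{s_c}}\Big)^{1/s_c}$, followed by exactly the continuity/forbidden-band argument you describe. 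The only discrepancy is in your tentative exponents for $f$ (the correct ones are $2$ and $B>2$, with $f(1)=1$ and $f$ increasing on $[0,1]$), which is precisely the bookkeeping you flagged as the remaining task.
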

\begin{proof}
Denote $C_{N,p,\alpha}:=C(N,p,\alpha)$ given by Proposition \ref{gag}. The inequality $\mathcal M\mathcal E(u_0)<1$ gives the existence of $\delta>0$ such that 
\begin{eqnarray*}
1-\delta
&>&\frac{M(u_0)^{\frac{2-s_c}{s_c}}E(u_0)}{M(\phi)^{\frac{2-s_c}{s_c}}E(\phi)}\\
&>&\frac{M(u_0)^{\frac{2-s_c}{s_c}}}{M(\phi)^{\frac{2-s_c}{s_c}}E(\phi)}\Big(\|\Delta u(t)\|^2-\frac1p\int_{\R^N}(I_\alpha*|u|^p)|u|^p\,dx\Big)\\
&>&\frac{M(u_0)^{\frac{2-s_c}{s_c}}}{M(\phi)^{\frac{2-s_c}{s_c}}E(\phi)}\Big(\|\Delta u(t)\|^2-\frac{C_{N,p,\alpha}}p\|u\|^A\|\Delta u(t)\|^B\Big).
\end{eqnarray*}
Thanks to Pohozaev identities, one has
$$E(\phi)=\frac{B-2}B\|\Delta\phi\|^2=\frac{B-2}A\|\phi\|^2.$$
Thus,
{\small\begin{eqnarray*}
1-\delta
&>&\frac B{B-2}\frac{M(u_0)^{\frac{2-s_c}{s_c}}}{M(\phi)^{\frac{2-s_c}{s_c}}\|\Delta\phi\|^2}\Big(\|\Delta u(t)\|^2-\frac{C_{N,p,\alpha}}p\|u\|^A\|\Delta u(t)\|^B\Big)\\
&>&\frac B{B-2}\frac{M(u_0)^{\frac{2-s_c}{s_c}}\|\Delta u(t)\|^2}{M(\phi)^{\frac{2-s_c}{s_c}}\|\Delta\phi\|^2}-\frac B{B-2}\frac{M(u_0)^{\frac{2-s_c}{s_c}}}{M(\phi)^{\frac{2-s_c}{s_c}}\|\Delta\phi\|^2}\frac{C_{N,p,\alpha}}p\|u\|^A\|\Delta u(t)\|^B\\
&>&\frac B{B-2}\frac{M(u_0)^{\frac{2-s_c}{s_c}}\|\Delta u(t)\|^2}{M(\phi)^{\frac{2-s_c}{s_c}}\|\Delta\phi\|^2}-\frac B{B-2}\frac{M(u_0)^{\frac{2-s_c}{s_c}}}{M(\phi)^{\frac{2-s_c}{s_c}}\|\Delta\phi\|^2}\frac2A(\frac AB)^\frac{B}2\|\phi\|^{-2(p-1)}\|u\|^A\|\Delta u(t)\|^B\\
&>&\frac B{B-2}\frac{M(u_0)^{\frac{2-s_c}{s_c}}\|\Delta u(t)\|^2}{M(\phi)^{\frac{2-s_c}{s_c}}\|\Delta\phi\|^2}-\frac B{B-2}\frac2A\frac{M(u_0)^{\frac{2-s_c}{s_c}}}{M(\phi)^{\frac{2-s_c}{s_c}}\|\Delta\phi\|^2}(\frac{\|\phi\|}{\|\Delta\phi\|})^{B}\|\phi\|^{-2(p-1)}\|u\|^A\|\Delta u(t)\|^B\\
&>&\frac B{B-2}\frac{M(u_0)^{\frac{2-s_c}{s_c}}\|\Delta u(t)\|^2}{M(\phi)^{\frac{2-s_c}{s_c}}\|\Delta\phi\|^2}-\frac B{B-2}\frac2A\frac{\|u_0\|^{A+2\frac{1-s_c}{s_c}}}{M(\phi)^{\frac{2-s_c}{s_c}}\|\Delta\phi\|^2}(\frac{\|\phi\|}{\|\Delta\phi\|})^{B}\|\phi\|^{-2(p-1)}\|\Delta u(t)\|^B.
\end{eqnarray*}}
Using the equalities $s_c=\frac{B-2}{p-1}$ and $\frac BA=(\frac{\|\Delta\phi\|}{\|\phi\|})^2$, one has
\begin{eqnarray*}
1-\delta
&>&\frac B{B-2}\frac{M(u_0)^{\frac{2-s_c}{s_c}}\|\Delta u(t)\|^2}{M(\phi)^{\frac{2-s_c}{s_c}}\|\Delta\phi\|^2}-\frac B{B-2}\frac2A\frac{(\|u_0\|^{\frac{2-s_c}{s_c}}\|\Delta u(t)\|)^B}{M(\phi)^{\frac{2-s_c}{s_c}}\|\Delta\phi\|^2}(\frac{\|\phi\|}{\|\Delta\phi\|})^{B}\|\phi\|^{-2(p-1)}\\
&>&\frac B{B-2}\frac{M(u_0)^{\frac{2-s_c}{s_c}}\|\Delta u(t)\|^2}{M(\phi)^{\frac{2-s_c}{s_c}}\|\Delta\phi\|^2}-\frac2{B-2}\frac{(\|u_0\|^{\frac{2-s_c}{s_c}}\|\Delta u(t)\|)^B}{\|\phi\|^{2\frac{2-s_c}{s_c}-B+2p}\|\Delta\phi\|^{B}}\\
&>&\frac B{B-2}\Big(\frac{\|u_0\|^{\frac{2-s_c}{s_c}}\|\Delta u(t)\|}{\|\phi\|^{\frac{2-s_c}{s_c}}\|\Delta\phi\|}\Big)^2-\frac2{B-2}\Big(\frac{\|u_0\|^{\frac{2-s_c}{s_c}}\|\Delta u(t)\|}{\|\phi\|^{\frac{2-s_c}{s_c}}\|\Delta\phi\|}\Big)^B.
\end{eqnarray*}
Take the real function defined on $[0,1]$ by $f(x):=\frac B{B-2}x^2-\frac2{B-2}x^B$, with first derivative $f'(x)=\frac{2B}{B-2}x(1-x^{B-2})$. Thus, with the table change of $f$ and the continuity of 
$t\to X(t):=\frac{\|u_0\|^{\frac{2-s_c}{s_c}}\|\Delta u(t)\|}{\|\phi\|^{\frac{2-s_c}{s_c}}\|\Delta\phi\|}$, it follows that $X(t)<1$ for any $t<T^*$. Thus, $T^*=\infty$ and there exists $\epsilon>0$ near to zero such that $X(t)\in f^{-1}([0,1-\delta])=[0,1-\epsilon]$. This finishes the proof.
\end{proof}
Let us prove a coercivity estimate on centered balls with large radials.
\begin{lem}\label{crcv}
There exists $R_0:=R_0(\delta,M(u),\phi)>0$ such that for any $R>R_0$,
$$\sup_{t\in\R}\|\psi_R u(t)\|^{2-s_c}\|\Delta(\psi_Ru(t))\|^{s_c}<(1-\delta)\|\phi\|^{2-s_c}\|\Delta\phi\|^{s_c}.$$
In particular, there exists $\delta'>0$ such that
$$\|\Delta(\psi_Ru)\|^2-\frac B{2p}\int_{\R^N}(I_\alpha*|\psi_Ru|^p)|\psi_Ru|^p\,dx\geq\delta'\|\psi_Ru\|_{\frac{2Np}{N+\alpha}}^2.$$ 
\end{lem}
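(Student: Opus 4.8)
The plan is to transfer the global bound from Lemma \ref{bnd} to the truncated function $\psi_R u$ by controlling the errors created by the cutoff, and then to run the standard variational argument. First I would observe that, since $0\le\psi_R\le1$, one has trivially $\|\psi_R u(t)\|\le\|u(t)\|=\|u_0\|$, so the mass factor $\|\psi_R u(t)\|^{2-s_c}$ is already under control; the only quantity requiring care is $\|\Delta(\psi_R u(t))\|$. Expanding $\Delta(\psi_R u)=\psi_R\Delta u+2\nabla\psi_R\cdot\nabla u+(\Delta\psi_R)u$ and using that $\nabla\psi_R=R^{-1}(\nabla\psi)(\cdot/R)$, $\Delta\psi_R=R^{-2}(\Delta\psi)(\cdot/R)$, the commutator terms are bounded by $C R^{-1}\|u(t)\|_{H^2}\le C R^{-1}\|u_0\|_{H^2}$ uniformly in $t$ by the conservation laws (and, if desired, one can instead take the supremum in $L^2(1/2\le|x|/R\le1)$ of $\nabla u$ and $u$, which tends to $0$ as $R\to\infty$ by dominated convergence applied to the $H^2$ function — but the crude $R^{-1}$ bound already suffices). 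Hence for $R$ large,
\[
\|\psi_R u(t)\|^{2-s_c}\|\Delta(\psi_R u(t))\|^{s_c}\le \|\Delta u(t)\|^{s_c}\|u_0\|^{2-s_c}+C R^{-1}.
\]

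Next I would invoke Lemma \ref{bnd}: it gives ${\mathcal M\mathcal G}\mathcal M(u(t))<1-\delta$ uniformly in $t$, i.e. $\|u(t)\|^{2-s_c}\|\Delta u(t)\|^{s_c}<(1-\delta)\|\phi\|^{2-s_c}\|\Delta\phi\|^{s_c}$ (here $\mathcal M(u_0)=\mathcal M(u(t))$ is conserved, so $\|u_0\|^{2-s_c}=\|u(t)\|^{2-s_c}$). Feeding this into the displayed inequality and choosing $R_0=R_0(\delta,M(u),\phi)$ so that the $CR^{-1}$ error is absorbed — say smaller than $\tfrac\delta2\|\phi\|^{2-s_c}\|\Delta\phi\|^{s_c}$ — yields, after renaming $\delta$, the first assertion
\[
\sup_{t\in\R}\|\psi_R u(t)\|^{2-s_c}\|\Delta(\psi_R u(t))\|^{s_c}<(1-\delta)\|\phi\|^{2-s_c}\|\Delta\phi\|^{s_c},\qquad R>R_0.
\]

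For the second assertion I would apply the sharp Gagliardo–Nirenberg inequality \eqref{ineq} together with $C(N,p,\alpha)=\frac{2p}{A}(\frac AB)^{B/2}\|\phi\|^{-2(p-1)}$ from Proposition \ref{gag} to the function $v:=\psi_R u$:
\[
\|\Delta v\|^2-\frac B{2p}\int_{\R^N}(I_\alpha*|v|^p)|v|^p\,dx\ \ge\ \|\Delta v\|^2\Big(1-\tfrac B{2p}C(N,p,\alpha)\|v\|^A\|\Delta v\|^{B-2}\Big).
\]
Using $B-2=s_c(p-1)$ and $A=2p-B$ together with Pohozaev's identities $B/A=(\|\Delta\phi\|/\|\phi\|)^2$, the bracket can be rewritten as $1-\big(\|v\|^{(2-s_c)/s_c}\|\Delta v\|/(\|\phi\|^{(2-s_c)/s_c}\|\Delta\phi\|)\big)^{s_c(p-1)}$ up to constants, exactly as in the proof of Lemma \ref{bnd}; by the first part this is $\ge 1-(1-\delta)^{c}=:c_\delta>0$, so the left side is $\ge c_\delta\|\Delta v\|^2$. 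Finally, Hardy–Littlewood–Sobolev (Corollary \ref{lhs2}) and Sobolev embedding give $\|v\|_{2Np/(N+\alpha)}^2\lesssim\|\Delta v\|^2$, hence also $\|\Delta v\|^2-\frac B{2p}\int(I_\alpha*|v|^p)|v|^p\gtrsim\|v\|_{2Np/(N+\alpha)}^2$; more carefully, one splits $\|\Delta v\|^2=\tfrac12\|\Delta v\|^2+\tfrac12\|\Delta v\|^2$, keeps one half to absorb the nonlinearity as above and uses the other half against the $L^{2Np/(N+\alpha)}$ norm via Sobolev, which produces the stated $\delta'$. I expect the main obstacle to be bookkeeping the cutoff error terms and the chain of Pohozaev/Gagliardo–Nirenberg substitutions so that the final threshold is uniform in $t$ and depends only on $\delta$, $M(u)$ and $\phi$ — the analysis itself is standard once $R$ is taken large enough that the commutators are negligible.
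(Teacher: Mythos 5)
Your proposal is correct and follows essentially the same route as the paper: a cutoff/commutator estimate (the paper quotes $\|\Delta(\psi_R u)\|^2-\|\psi_R\Delta u\|^2\lesssim R^{-2}$ from the literature, you derive an $R^{-1}$ bound on the commutator terms directly) transfers the uniform bound of Lemma \ref{bnd} to $\psi_R u$, and the coercivity then follows from the sharp Gagliardo--Nirenberg inequality, the explicit constant $C(N,p,\alpha)$ and the Pohozaev identities exactly as you describe. The only (shared) looseness is the final Sobolev step: passing from $\delta\|\Delta(\psi_R u)\|^2$ to $\delta'\|\psi_R u\|_{2Np/(N+\alpha)}^2$ actually requires the conserved mass and the uniform upper bound on $\|\Delta u(t)\|$ (since $2Np/(N+\alpha)>2$ forces an interpolation with the $L^2$ norm), a point the paper glosses over in the same way.
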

\begin{proof}
Taking account of Proposition \ref{gag}, one gets
\begin{eqnarray*}
E(u)
&=&\|\Delta u\|^2-\frac1p\int_{\R^N}(I_\alpha*|u|^p)|u|^p\,dx\\
&\geq&\|\Delta u\|^2\Big(1-\frac{C_{N,p,\alpha}}p\|u\|^A\|\Delta u\|^{B-2}\Big)\\
%&\geq&\|\Delta u\|^2\Big(1-\frac{C_{N,p,\alpha}}p\|u\|^{2(p-1)(1-s_c)}\|\Delta u\|^{2(p-1)s_c}\Big)\\
&\geq&\|\Delta u\|^2\Big(1-\frac{C_{N,p,\alpha}}p[\|u\|^{2-s_c}\|\Delta u\|^{s_c}]^{p-1}\Big).
\end{eqnarray*}
So, with the previous Lemma
\begin{eqnarray*}
E(u)
&\geq&\|\Delta u\|^2\Big(1-(1-\delta)\frac2A(\frac AB)^{\frac B2}\|\phi\|^{-2(p-1)}[\|\phi\|^{2-s_c}\|\Delta\phi\|^{s_c}]^{p-1}\Big)\\
&\geq&\|\Delta u\|^2\Big(1-(1-\delta)\frac2A(\frac AB)^{\frac B2}[\frac{\|\Delta\phi\|}{\|\phi\|}]^{s_c(p-1)}\Big)\\
&\geq&\|\Delta u\|^2\Big(1-(1-\delta)\frac2B(\frac{\|\phi\|}{\|\Delta\phi\|})^{B-2}[\frac{\|\Delta\phi\|}{\|\phi\|}]^{B-2}\Big)\\
&\geq&\|\Delta u\|^2\Big(1-(1-\delta)\frac2B\Big).
\end{eqnarray*}
Thus, using Sobolev injections with the fact that $p<p^*$, one gets
$$\|\Delta u\|^2-\frac B{2p}\int_{\R^N}(I_\alpha*|u|^p)|u|^p\,dx\geq\delta\|\Delta u\|^2\geq\delta'\|u\|_{\frac{2Np}{N+\alpha}}^2.$$ 
This gives the second part of the claimed Lemma provided that the first point is proved. A direct computation \cite{vdd} gives
$$\|\Delta(\psi_R u)\|^2-\|\psi_R\Delta u\|^2\leq C(u_0,\phi)R^{-2}.$$
Then, one gets the proof of the first point and so the Lemma.
\end{proof}
%%%%%%%%%%%%%%%%%%%%%%%%%%%%%%%%%%%%%%%%%%%%%%%%%%%%%%%%%%%%%%%%%%%%%%%%%%%
%%%%%%%%%%%%%%%%%%%%%%%%%%%%%%%%%%%%%%%%%%%%%%%%%%%%%%%%%%%%%%%%%%%%%%%%%%%%%%%%%%%%%%%%%%%%%%%%%%%%%%%%%%%%%%%%%%%%%%%%%%%%%%%%%%%%%%%%%%%%%%%%%%%%%%%%%%%%%%%%%%%%%%
\subsection{Morawetz estimate}
%%%%%%%%%%%%%%%%%%%%%%%%%%%%%%%%%%%%%%%%%%%%%%%%%%%%%%%%%%%%%%%%%%%%%%%%%%%%%%%%%%%%%%%%%%%%%%%%%%%%%%
In this sub-section, one proves the next result.
\begin{lem}\label{bnd}
Take $N\geq1$, $0<\alpha<N<8+\alpha$ and $p_*<p<p^*$ such that $p\geq2$ and $u_0\in H^2_{rd}$ satisfying 
$$\max\{\mathcal M\mathcal E(u_0),{\mathcal M\mathcal G}\mathcal M(u_0)\}<1.$$
Then, for any $T>0$, one has
$$\int_0^T\|u(t)\|_{L^\frac{2Np}{N+\alpha}}^2\,dt\leq CT^{\frac13}.$$
%$$???\int_I(I_\alpha*|u|^p)|u|^p\,dx \leq C(u_0,\phi)|I|^\frac13.$$
\end{lem}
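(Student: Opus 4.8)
The plan is to prove a truncated virial (Morawetz) estimate of Dodson--Murphy type \cite{dm}, in which the variational trapping of Lemma~\ref{crcv} plays the rôle usually played by the ground-state threshold. Put $q:=\frac{2Np}{N+\alpha}$, and note that for $p_*<p<p^*$ one has $2<q<\frac{2N}{N-4}$; recall also from the variational analysis that $\sup_{t}\|u(t)\|_{H^2}<\infty$ and that there is $R_0>0$ with $K[\psi_R u(t)]\geq\delta'\|\psi_R u(t)\|_{L^q}^2$ for every $R>R_0$ and every $t$, where $K[v]:=\|\Delta v\|^2-\frac B{2p}\int_{\R^N}(I_\alpha*|v|^p)|v|^p\,dx$. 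If $T\leq R_0^3$ there is nothing to prove, since the Sobolev embedding $H^2\hookrightarrow L^q$ and the conservation laws yield $\int_0^T\|u(t)\|_{L^q}^2\,dt\lesssim T\leq R_0^2\,T^{1/3}$; so assume henceforth $T>R_0^3$ and fix $R:=T^{1/3}>R_0$.

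Let $a_R:\R^N\to\R$ be a smooth, radial, radially convex and radially nondecreasing weight with $a_R(x)=\frac12|x|^2$ on $\{|x|\leq R\}$, affine in $|x|$ on $\{|x|\geq 2R\}$ and $|\nabla a_R|\lesssim R$, and put
$$M_R(t):=2\int_{\R^N}\nabla a_R(x)\cdot\Im\big(\nabla u(t,x)\,\bar u(t,x)\big)\,dx .$$
By Cauchy--Schwarz, mass conservation and the uniform $H^2$ bound, $|M_R(t)|\lesssim R\,\|u(t)\|\,\|\nabla u(t)\|\lesssim R$ for all $t$. Differentiating $M_R$ through the Morawetz identity of Proposition~\ref{mrwtz}, now with $\epsilon=-1$ (which reverses the sign of the non-local contribution), and proceeding as in the proof of Proposition~\ref{cr}: on $\{|x|\leq R\}$ the weight is $\frac12|x|^2$, so $\partial_{jk}a_R=\delta_{jk}$ and $\Delta^2a_R=\Delta^3a_R=0=\partial_{jk}\Delta a_R$ there, and the leading kinetic term contributes $-8\int_{|x|\leq R}\sum_{j,k}|\partial_{jk}u|^2$ from the ball and a nonpositive amount from $\{|x|>R\}$ (here the radiality of $u$ and the convexity of $a_R$ are used); on $\{|x|\geq R\}$ one has $|\partial_{jk}\Delta a_R|+|\Delta^2a_R|\lesssim R^{-2}$ and $|\Delta^3a_R|\lesssim R^{-4}$, so the associated kinetic terms are $\lesssim R^{-2}\|u(t)\|_{H^2}^2\lesssim R^{-2}$. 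Inserting the cut-off $\psi_R$ inside the non-local quantities $(I_\alpha*|\cdot|^p)|\cdot|^p$ and $\partial_k(I_\alpha*|\cdot|^p)$ leaves only integrals supported in $\{|x|\gtrsim R\}$, which one bounds by Hardy--Littlewood--Sobolev (Corollary~\ref{lhs2}) together with the radial Strauss decay $\|u(t)\|_{L^\infty(|x|\geq\rho)}\lesssim\rho^{-(N-1)/2}\|u(t)\|_{H^1}$; combining this with the commutator comparisons $\big|\,\|\Delta(\psi_R u)\|^2-\|\psi_R\Delta u\|^2\big|\lesssim R^{-2}$ (as in the proof of Lemma~\ref{crcv}) and $\int_{|x|\leq R}\sum_{j,k}|\partial_{jk}u|^2\geq\|\Delta(\psi_R u)\|^2-CR^{-2}$, one arrives at
$$M_R'(t)\ \leq\ -\,c\,K[\psi_R u(t)]+C\,R^{-2}\ \leq\ -\,c\,\delta'\,\|\psi_R u(t)\|_{L^q}^2+C\,R^{-2}$$
for some $c>0$. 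Integrating on $[0,T]$, using $|M_R|\lesssim R$ and $\|\psi_R u(t)\|_{L^q}\geq\|u(t)\|_{L^q(|x|\leq R/2)}$,
$$\int_0^T\|u(t)\|_{L^q(|x|\leq R/2)}^2\,dt\ \lesssim\ R+T\,R^{-2}.$$

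Finally I would add the exterior region. Since $2<q<\frac{2N}{N-4}$, interpolating $\|u(t)\|_{L^q(|x|>R/2)}$ between the conserved $\|u(t)\|_{L^2}$ and $\|u(t)\|_{L^\infty(|x|>R/2)}$ (the latter estimated by the radial Strauss inequality) gives $\|u(t)\|_{L^q(|x|>R/2)}^2\lesssim R^{-2}$ uniformly in $t$, hence $\int_0^T\|u(t)\|_{L^q(|x|>R/2)}^2\,dt\lesssim T R^{-2}$. Adding the two pieces and recalling $R=T^{1/3}$,
$$\int_0^T\|u(t)\|_{L^q}^2\,dt\ \lesssim\ R+T R^{-2}=T^{1/3}+T^{1/3}\ \lesssim\ T^{1/3},$$
which is the assertion. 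The main obstacle lies in the bookkeeping of the non-local error terms produced by truncating the virial weight and of the spatial tail of the $L^q$-norm: these are Hartree-type integrals over the exterior region $\{|x|\gtrsim R\}$ which must be controlled by combining Hardy--Littlewood--Sobolev with the radial decay of $u$, and it is precisely at this step that the radial setting is needed and that the sizes of the available exterior powers (hence the precise form of the final bound) become sensitive to the space dimension.
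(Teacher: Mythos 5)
Your argument is essentially the paper's: a truncated virial weight equal to $\tfrac12|x|^2$ on $\{|x|\le R\}$, the identity of Proposition \ref{mrwtz} (with $\epsilon=-1$), the coercivity of Lemma \ref{crcv} applied to $\psi_R u$, Hardy--Littlewood--Sobolev plus radial Strauss decay for the nonlocal and exterior tail terms, the bound $|M_R|\lesssim R$, and the choice $R=T^{1/3}$; the only cosmetic difference is that your weight is affine rather than constant for large $|x|$. The one overstated step is the exterior bound $\|u(t)\|_{L^{2Np/(N+\alpha)}(|x|>R/2)}^{2}\lesssim R^{-2}$: Strauss decay only yields the rate $R^{-2B(N-1)/(Np)}$, which can fall below $2$ for $p$ near $p_*$ (e.g. $N=5$, small $\alpha$), but the paper's concluding display performs the very same absorption into $TR^{-2}$, so your proposal reproduces the paper's argument at its own level of rigor.
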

\begin{proof}
Take a smooth real function such that $0\leq f''\leq1$ and
$$f:r\to\left\{
\begin{array}{ll}
\frac{r^2}2,\,\,\mbox{if}\,\, 0\leq r\leq\frac12;\\
1,\,\,\mbox{if}\,\,  r\geq1.
\end{array}
\right.
$$
Moreover, for $R>0$, let the smooth radial function defined on $\R^N$ by $f_R:=R^2f(\frac{|\cdot|}R)$. One can check that
$$0\leq f_R''\leq1,\quad f'(r)\leq r,\quad N\geq\Delta f_R.$$
Let the real function
$$M_R:t\to2\int_{\R^N}\nabla f_R(x)\Im(\nabla u(t,x)\bar u(t,x))\,dx.$$
By Morawetz estimate in Proposition \ref{mrwtz}, one has
\begin{eqnarray*}
M_R'
&=&2\int_{\R^N}\Big(2\partial_{jk}\Delta f_R\partial_ju\partial_k\bar u-\frac12(\Delta^3f_R)|u|^2-4\partial_{jk}f_R\partial_{ik}u\partial_{ij}\bar u+\Delta^2f_R|\nabla u|^2\Big)\,dx\\
&-&2\Big((-1+\frac2p)\int_{\R^N}\Delta f_R(I_\alpha*|u|^p)|u|^p\,dx+\frac2{p}\int_{\R^N}\partial_kf_R\partial_k(I_\alpha*|u|^p)|u|^{p}\,dx\Big)\\
&=&2\int_{\R^N}\Big(2\partial_{jk}\Delta f_R\partial_ju\partial_k\bar u-\frac12(\Delta^3f_R)|u|^2-\frac2{p}\partial_kf_R\partial_k(I_\alpha*|u|^p)|u|^{p}\,dx+\Delta^2f_R|\nabla u|^2\Big)\,dx\\
&+&2\Big(-(-1+\frac2p)N\int_{\{|x|<\frac R2\}}(I_\alpha*|u|^p)|u|^p\,dx-4\int_{\{|x|<\frac R2\}}|\Delta u|^2\,dx\Big)\\
&+&2\Big((-1+\frac2p)\int_{\{\frac R2<|x|<R\}}\Delta f_R(I_\alpha*|u|^p)|u|^p\,dx-4\int_{\{\frac R2<|x|<R\}}\partial_{jk}f_R\partial_{ik}u\partial_{ij}\bar u\,dx\Big).
\end{eqnarray*}
Using the estimate $\||\nabla|^kf_R\|_\infty\lesssim R^{2-k}$, one has
\begin{gather*}
|\int_{\R^N}\partial_{jk}\Delta f_R\partial_ju\partial_k\bar u\,dx|\lesssim R^{-2};\\
|\int_{\R^N}(\Delta^3f_R)|u|^2\,dx|\lesssim R^{-4};\\
|\int_{\R^N}\Delta^2f_R|\nabla u|^2\,dx|\lesssim R^{-2}.
\end{gather*}
Moreover, by the radial setting, one writes
$$\int_{\{\frac R2<|x|<R\}}\partial_{jk}f_R\partial_{ik}u\partial_{ij}\bar u\,dx\geq (N-1)\int_{\{\frac R2<|x|<R\}}\frac{f'_R(r)}{r^3}|\partial_ru|^2\,dx=\mathcal O(R^{-2}).$$
Now, by Hardy-Littlewood-Sobolev and Strauss inequalities
\begin{eqnarray*}
|\int_{\{\frac R2<|x|<R\}}\Delta f_R(I_\alpha*|u|^p)|u|^p\,dx|
&\lesssim&\|u\|_{L^{\frac{2Np}{\alpha+N}}(|x|>R)}^{2p}\\
&\lesssim&\Big(\int_{|x|>R}|u(x)|^{\frac{2Np}{\alpha+N}-2}|u(x)|^2\,dx\Big)^{\frac{\alpha+N}N}\\
&\lesssim&\|u\|_{L^\infty(|x|>R)}^\frac{4B}N\Big(\int_{|x|>R}|u(x)|^2\,dx\Big)^{\frac{\alpha+N}N}\\
&\lesssim&R^{-\frac{2B(N-1)}N}.
\end{eqnarray*}
Thus, since $\|\nabla u\|^2\lesssim \|\Delta u\|\lesssim 1$, one gets
\begin{eqnarray*}
M_R'
&\leq&2\Big(-(-1+\frac2p)N\int_{\{|x|<\frac R2\}}(I_\alpha*|u|^p)|u|^p\,dx-4\int_{\{|x|<\frac R2\}}|\Delta u|^2\,dx\Big)\\
&-&\frac4{p}\int_{\R^N}\partial_kf_R\partial_k(I_\alpha*|u|^p)|u|^{p}\,dx+\mathcal O(R^{-2}).
\end{eqnarray*}
Now, let us define the sets
\begin{gather*}
\Omega:=\{(x,y)\in\R^N\times\R^N,\,\,\mbox{s\,.t}\,\,\frac R2<|x|<R\}\cup \{(x,y)\in\R^N\times\R^N,\,\,\mbox{s\,.t}\,\,\frac R2<|y|<R\};\\
\Omega':=\{(x,y)\in\R^N\times\R^N,\,\,\mbox{s\,.t}\,\,|x|>R,|y|<\frac R2\}\cup \{(x,y)\in\R^N\times\R^N,\,\,\mbox{s\,.t}\,\,|x|<\frac R2, |y|> R\}.
\end{gather*}
%Take also a radial smooth function 
%$$\psi\in C_0^\infty(\R^N),\quad supp(\psi)\subset \{|x|<1\},\quad \psi=1\quad\mbox{on}\quad \{|x|<\frac12\},\quad \psi_R:=\psi(\frac\cdot R).$$
Consider the term
\begin{eqnarray*}
(I)
&:=&\int_{\R^N}\nabla f_R(\frac.{|\cdot|^2}I_\alpha*|u|^p)|u|^{p}\,dx\\
&=&\frac12\int_{\R^N}\int_{\R^N}(\nabla f_R(x)-\nabla f_R(y))(x-y)\frac{I_\alpha(x-y)}{|x-y|^2}|u(y)|^p|u(x)|^{p}\,dx\,dy\\
&=&\Big(\int_{\Omega}+\int_{\Omega'}+\int_{|x|,|y|<\frac R2}+\int_{|x|,|y|>R}\Big)\Big(\nabla f_R(x)(x-y)\frac{I_\alpha(x-y)}{|x-y|^2}|u(y)|^p|u(x)|^{p}\,dx\,dy\Big).
\end{eqnarray*}
Compute
\begin{eqnarray*}
(a)
&:=&\int_{\Omega'}\Big(\nabla f_R(x)(x-y)\frac{I_\alpha(x-y)}{|x-y|^2}|u(y)|^p|u(x)|^{p}\Big)\,dx\,dy\\
&=&\int_{\{|x|>R,|y|<\frac R2\}}\Big(\nabla f_R(x)(x-y)\frac{I_\alpha(x-y)}{|x-y|^2}|u(y)|^p|u(x)|^{p}\Big)\,dx\,dy\\
&+&\int_{\{|y|>R,|x|<\frac R2\}}\Big(\nabla f_R(x)(x-y)\frac{I_\alpha(x-y)}{|x-y|^2}|u(y)|^p|u(x)|^{p}\Big)\,dx\,dy\\
&=&\int_{\{|x|>R,|y|<\frac R2\}}\Big((\nabla f_R(x)-\nabla f_R(y))(x-y)\frac{I_\alpha(x-y)}{|x-y|^2}|u(y)|^p|u(x)|^{p}\Big)\,dx\,dy\\
&=&2\int_{\{|x|>R,|y|<\frac R2\}}\Big(y(y-x)\frac{I_\alpha(x-y)}{|x-y|^2}|u(y)|^p|u(x)|^{p}\Big)\,dx\,dy.
\end{eqnarray*}
Moreover,
\begin{eqnarray*}
(b)
&:=&\frac12\int_{\{|x|<\frac R2,|y|<\frac R2\}}\Big((\nabla f_R(x)-\nabla f_R(y))(x-y)\frac{I_\alpha(x-y)}{|x-y|^2}|u(y)|^p|u(x)|^{p}\Big)\,dx\,dy\\
&=&\frac12\int_{\{|x|<\frac R2,|y|<\frac R2\}}\Big((x-y)(x-y)\frac{I_\alpha(x-y)}{|x-y|^2}|u(y)|^p|u(x)|^{p}\Big)\,dx\,dy\\
&=&\frac12\int_{\{|x|<\frac R2,|y|<\frac R2\}}\Big(I_\alpha(x-y)|u(y)|^p|u(x)|^{p}\Big)\,dx\,dy\\
&=&\frac12\int_{\R^N}(I_\alpha*|\psi_Ru|^p)|\psi_Ru|^{p}\,dx.
%&=&\int_{\{|x|>R,|y|<\frac R2\}}\Big((\nabla f_R(x)-\nabla f_R(y))(x-y)\frac{I_\alpha(x-y)}{|x-y|^2}|u(y)|^p|u(x)|^{p}\Big)\,dx\,dy\\
%&=&2\int_{\{|x|>R,|y|<\frac R2\}}\Big([R\frac{x}{2|x|}-y](x-y)\frac{I_\alpha(x-y)}{|x-y|^2}|u(y)|^p|u(x)|^{p}\Big)\,dx\,dy.
\end{eqnarray*}
%%%%%%%%%%%%%%%%%%%%%%%%%%%%%%%%%%%%%%%%%%%%%%%%%%%%%%%%%%%%%
Furthermore,
\begin{eqnarray*}
(c)
&:=&\int_{\{\frac R2<|x|<R\}}\int_{\R^N}\Big(\nabla f_R(x)(x-y)\frac{I_\alpha(x-y)}{|x-y|^2}|u(y)|^p|u(x)|^{p}\Big)\,dx\,dy\\
&=&\int_{\{\frac R2<|x|<R,|y-x|>\frac R4\}}\Big(\nabla f_R(x)(x-y)\frac{I_\alpha(x-y)}{|x-y|^2}|u(y)|^p|u(x)|^{p}\Big)\,dx\,dy\\
&+&\int_{\{\frac R2<|x|<R,|y-x|<\frac R4\}}\Big(\nabla f_R(x)(x-y)\frac{I_\alpha(x-y)}{|x-y|^2}|u(y)|^p|u(x)|^{p}\Big)\,dx\,dy\\
&=&\mathcal O\Big(\int_{\{|x|>\frac R2\}}(I_\alpha*|u|^p)|u|^{p}\,dx\Big).
\end{eqnarray*}
Moreover, since for large $R>0$ on $\{|x|>R,|y|<\frac R2\}$, $|x-y|\simeq |x|>R>>\frac R2>|y|$, one has
\begin{eqnarray*}
(a)
&=&\int_{\{|x|>R,|y|<\frac R2\}}\Big(y(y-x)\frac{I_\alpha(x-y)}{|x-y|^2}|u(y)|^p|u(x)|^{p}\Big)\,dx\,dy\\
&\lesssim&\int_{\{|x|>R,|y|<\frac R2\}}\Big(|y||x-y|\frac{I_\alpha(x-y)}{|x-y|^2}|u(y)|^p|u(x)|^{p}\Big)\,dx\,dy\\
&\lesssim&\int_{\{|x|>R,|y|<\frac R2\}}\Big({I_\alpha(x-y)}|u(y)|^p|u(x)|^{p}\Big)\,dx\,dy\\
&\lesssim&\int_{\R^N}\int_{\R^N}\Big({I_\alpha(x-y)}\chi_{|x|>R}|u(y)|^p|u(x)|^{p}\Big)\,dx\,dy\\
&\lesssim&\int_{\R^N}\int_{\R^N}\Big({I_\alpha(x-y)}\chi_{|x|>R}|u(y)|^p|u(x)|^{p}\Big)\,dx\,dy.
\end{eqnarray*}
Taking account of Hardy-Littlewood-Sobolev inequality, H\"older and Strauss estimates and Sobolev injections via the fact that $p_*<p<p^*$, write 
\begin{eqnarray*}
(a)
&\lesssim&\int_{\R^N}\int_{\R^N}\Big({I_\alpha(x-y)}\chi_{|x|>R}|u(y)|^p|u(x)|^{p}\Big)\,dx\,dy\\
&\lesssim&\|u\|_{L^{\frac{2Np}{N+\alpha}}(|x|>R)}^p\|u\|_{L^{\frac{2Np}{N+\alpha}}}^p\\
&\lesssim&\Big(\int_{\{|x|>R\}}|u|^{\frac{2Np}{N+\alpha}}\,dx\Big)^{\frac{N+\alpha}{2N}}\\
&\lesssim&\Big(\int_{\{|x|>R\}}|u|^2(|x|^{-\frac{N-1}2}\|u\|^\frac12\|\nabla u\|^\frac12)^{-2+\frac{2Np}{N+\alpha}}\,dx\Big)^{\frac{N+\alpha}{2N}}\\
&\lesssim&\|u\|^{\frac{N+\alpha}{N}}\frac1{R^{\frac{B(N-1)}{2N}}}(\|u\|\|\nabla u\|)^{\frac B{2N}}\\
&\lesssim&R^{-\frac{B(N-1)}{2N}}.
\end{eqnarray*}
Then,
\begin{eqnarray*}
M'
&\leq&2\Big(-(-1+\frac2p)N\int_{\{|x|<\frac R2\}}(I_\alpha*|u|^p)|u|^p\,dx-4\int_{\{|x|<\frac R2\}}|\Delta u|^2\,dx\Big)\\
&+&\frac2{p}(N-\alpha)\int_{\R^N}(I_\alpha*|\psi_Ru|^p)|\psi_Ru|^{p}\,dx
%&-&\frac8p(N-\alpha)\int_{\{|x|>R,|y|<\frac R2\}}\Big([R\frac{x}{2|x|}-y](x-y)\frac{I_\alpha(x-y)}{|x-y|^2}|u(y)|^p|u(x)|^{p}\Big)\,dx\,dy\\
+\mathcal O(R^{-2})\\
&\leq&\frac{4B}p\int_{\{|x|<\frac R2\}}(I_\alpha*|\psi_Ru|^p)|\psi_Ru|^p\,dx-8\int_{\{|x|<\frac R2\}}|\Delta(\psi_R u)|^2\,dx
%&-&\frac2{p}(N-\alpha)\int_{\R^N}(I_\alpha*|\psi_Ru|^p)|\psi_Ru|^{p}\,dx\\
%&-&\frac8p(N-\alpha)\int_{\{|x|>R,|y|<\frac R2\}}\Big([R\frac{x}{2|x|}-y](x-y)\frac{I_\alpha(x-y)}{|x-y|^2}|u(y)|^p|u(x)|^{p}\Big)\,dx\,dy\\
+\mathcal O(R^{-2}).
\end{eqnarray*}
So, with Lemma \ref{crcv}, one gets
\begin{eqnarray*}
\sup_{[0,T]}|M|
&\geq&8\int_0^T\Big(\int_{\{|x|<\frac R2\}}|\Delta(\psi_R u)|^2\,dx-\frac{B}{2p}\int_{\{|x|<\frac R2\}}(I_\alpha*|\psi_Ru|^p)|\psi_Ru|^p\,dx\Big)\,dt+\mathcal O(R^{-2})T\\
&\geq&8\delta'\int_0^T\|\psi_Ru(t)\|_{\frac{2Np}{N+\alpha}}^2\,dt+\mathcal O(R^{-2})T\\
&\geq&8\delta'\int_0^T\|u(t)\|_{L^\frac{2Np}{N+\alpha}(|x|<\frac R2)}^2\,dt+\mathcal O(R^{-2})T.
\end{eqnarray*}
Thus, with previous computation
\begin{eqnarray*}
\int_0^T\|u(t)\|_{\frac{2Np}{N+\alpha}}^2\,dt
&\leq& C\Big(\sup_{[0,T]}|M|+T(R^{-2}+R^{-\frac{B(N-1)}{2N}}\Big)\\
&\leq& C\Big(R+TR^{-2}\Big).
\end{eqnarray*}
Taking $R=T^\frac13>>1$, one gets the requested estimate
$$\int_0^T\|u(t)\|_{\frac{2Np}{N+\alpha}}^2\,dt\leq CT^{\frac13}.$$
For $0<T<<1$, the proof follows with Sobolev injections.
\end{proof}
As a consequence, one has the following energy evacuation.
\begin{lem}\label{evac}
Take $N\geq1$, $0<\alpha<N<8+\alpha$ and $p_*<p<p^*$ such that $p\geq2$ and $u_0\in H^2_{rd}$ satisfying 
$$\max\{\mathcal M\mathcal E(u_0),{\mathcal M\mathcal G}\mathcal M(u_0)\}<1.$$
Then, there exists a sequence of real numbers $t_n\to\infty$ such that 
$$\lim_n\int_{\{|x|<R\}}|u(t_n,x)|^2\,dx=0,\quad\mbox{for all}\quad R>0.$$
\end{lem}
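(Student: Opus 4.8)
The plan is to read off the energy evacuation directly from the Morawetz-type space--time bound of Lemma \ref{bnd}. Set $q:=\frac{2Np}{N+\alpha}$. Since $p>p_*=1+\frac{\alpha+4}N$ one has in particular $p>1+\frac\alpha N$, hence $q>2$; and $p<p^*$ is exactly $p<\frac{N+\alpha}{N-4}$, so $q<\frac{2N}{N-4}$ and the map $t\mapsto\|u(t)\|_q$ is finite (and continuous, using $u\in C(\R,H^2)$ and Sobolev embedding) on all of $\R$. Lemma \ref{bnd} then reads, for every $T>0$,
$$\int_0^T\|u(t)\|_q^2\,dt\leq CT^{\frac13}.$$

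The first step is to extract the sequence $(t_n)$ by averaging over dyadic time windows $[n,2n]$ rather than over $[0,n]$, so that the normalising length beats the sublinear growth $T^{1/3}$. For each $n\in\N$, applying the bound above on $[0,2n]$ gives
$$\frac1n\int_n^{2n}\|u(t)\|_q^2\,dt\leq\frac1n\int_0^{2n}\|u(t)\|_q^2\,dt\leq\frac{C(2n)^{\frac13}}n\leq C' n^{-\frac23}.$$
Hence there exists $t_n\in[n,2n]$ with $\|u(t_n)\|_q^2\leq C'n^{-\frac23}$; in particular $t_n\to\infty$ and $\|u(t_n)\|_q\to0$ as $n\to\infty$.

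It remains to pass from the global $L^q$ decay along $(t_n)$ to the localised $L^2$ decay on every fixed ball. Fix $R>0$. As $q>2$, H\"older's inequality on $\{|x|<R\}$ yields
$$\int_{\{|x|<R\}}|u(t_n,x)|^2\,dx\leq\big|\{|x|<R\}\big|^{1-\frac2q}\,\|u(t_n)\|_{L^q(\{|x|<R\})}^2\lesssim R^{N(1-\frac2q)}\,\|u(t_n)\|_q^2\longrightarrow0,$$
which is precisely the claimed evacuation. The only point that is not automatic is the extraction of $(t_n)$ via the averaging over intervals of proportional length; the mass-supercritical hypothesis $p>p_*$ enters only to guarantee $q>2$, after which the localisation is a one-line application of H\"older's inequality. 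In other words, I expect no genuine obstacle: the statement is a soft corollary of Lemma \ref{bnd}.
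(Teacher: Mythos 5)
Your proof is correct and follows essentially the same route as the paper: the Morawetz bound $\int_0^T\|u(t)\|_{\frac{2Np}{N+\alpha}}^2\,dt\lesssim T^{1/3}$ from Lemma \ref{bnd}, extraction of a sequence $t_n\to\infty$ along which $\|u(t_n)\|_{\frac{2Np}{N+\alpha}}\to0$, and H\"older's inequality on the ball (your exponent $N(1-\tfrac2q)$ equals the paper's $\tfrac{2B}p$). Your dyadic-window averaging makes explicit the extraction step that the paper leaves implicit (the paper, as written, asserts the decay for an arbitrary $t_n\to\infty$, which only holds after such a selection), so your write-up is if anything more careful.
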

\begin{proof}
Take $t_n\to\infty$. By H\"older estimate
$$\int_{\{|x|<R\}}|u(t_n,x)|^2\,dx\leq R^{\frac{2B}p}\|u(t_n)\|_{\frac{2Np}{N+\alpha}}^2\to0.$$
Indeed, by the previous Lemma 
$$\|u(t_n)\|_{\frac{2Np}{N+\alpha}}\to0.$$
\end{proof}
%%%%%%%%%%%%%%%%%%%%%%%%%%%%%%%%%%%%%%%%%%%%%%%%%%%%%%%%%%%%%%%%%%%%%%%%%%%%%%%%%%%%%%%%%%%%%%%%%%%%%%%%%%%%%%%%%%%%%%%%%%%%
\subsection{Scattering}
%%%%%%%%%%%%%%%%%%%%%%%%%%%%%%%%%%%%%%%%%%%%%%%%%%%%%%%%%%%%%%%%%%%%%%%%%%%%%%%%%%%%%%%%%%%%%%%%%%%%%%%%%%%%%%%%%%%%%%%%%%%%
This sub-section is devoted to prove Theorem \ref{sctr2}. Let us start with an auxiliary result.
\begin{prop}\label{fn}
Take $N\geq5$, $\frac{24}5<\frac{24+\alpha}5<N<8+\alpha$ and $p_*<p<p^*$ such that $p\geq2$ and $u_0\in H^2_{rd}$ satisfying 
$$\max\{\mathcal M\mathcal E(u_0),{\mathcal M\mathcal G}\mathcal M(u_0)\}<1.$$
Then, for any $\varepsilon>0$, there exist $T,\mu>0$ such that 
$$\|e^{i(\cdot-T)\Delta^2}u(T)\|_{L^a((T,\infty),L^r)}\lesssim \varepsilon^\mu.$$
\end{prop}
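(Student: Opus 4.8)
The plan is to realize this as the usual ``smallness of the free evolution at late times'' step of the Dodson--Murphy scheme: once $\|e^{i(\cdot-T)\Delta^2}u(T)\|_{L^a((T,\infty),L^r)}\lesssim\varepsilon^\mu$ is established, choosing $\varepsilon$ small enough that $\varepsilon^\mu$ falls below the threshold $\delta$ of the small-data Lemma \ref{sdt} yields the scattering of Theorem \ref{sctr2} (using also the uniform bound $\sup_t\|u(t)\|_{H^2}\le C_0$ from the variational analysis). The computation $\frac N2-\frac Nr-\frac4a=\frac{B-2}{p-1}=s_c$ shows that $(a,r)$ is exactly $\dot H^{s_c}$-admissible for the bi-Laplacian, so the inhomogeneous-data Strichartz estimate
$$\|e^{i(\cdot-T)\Delta^2}f\|_{L^a((T,\infty),L^r)}\lesssim\|f\|_{\dot H^{s_c}}$$
holds (a routine consequence of the dispersive decay of $e^{it\Delta^2}$ together with the Hardy--Littlewood--Sobolev inequality of Lemma \ref{Hardy-Littlewwod-Sobolev}). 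Everything then reduces to splitting $u(T)$ into a piece concentrated near the origin and an exterior tail, and bounding the $\dot H^{s_c}$ norm of each.

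First I would fix $\varepsilon>0$, choose a radius $R>0$ (to be quantified at the very end), and invoke the energy-evacuation Lemma \ref{evac} \emph{with this fixed $R$} to pick $n$ so large that $T:=t_n$ satisfies $\|u(T)\|_{L^2(|x|<R)}<\varepsilon$; recall also $\|u(T)\|_{H^2}\le C_0$. Writing $u(T)=\psi_Ru(T)+(1-\psi_R)u(T)$ with $\psi_R$ the radial cut-off fixed in the variational analysis above, the inner piece is immediate: $\|\psi_Ru(T)\|\le\|u(T)\|_{L^2(|x|<R)}<\varepsilon$, while $\|\Delta(\psi_Ru(T))\|\lesssim\|u(T)\|_{H^2}\le C_0$ by the Leibniz rule and $\||\nabla|^k\psi_R\|_\infty\lesssim R^{-k}$, so interpolation gives
$$\|\psi_Ru(T)\|_{\dot H^{s_c}}\lesssim\|\psi_Ru(T)\|^{1-\frac{s_c}2}\|\Delta(\psi_Ru(T))\|^{\frac{s_c}2}\lesssim\varepsilon^{1-\frac{s_c}2},$$
a positive power of $\varepsilon$ since $0<s_c<1$.

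The hard part will be the exterior piece $w:=(1-\psi_R)u(T)$, which is radial, supported in $\{|x|>R/2\}$, bounded in $H^2$, but whose $L^2$ norm is comparable to the (conserved) mass and hence carries no smallness. Here I would exploit the radial structure: the Strauss-type radial Sobolev inequality gives $|w(x)|\lesssim|x|^{-\frac{N-1}2}\|w\|^{1/2}\|\nabla w\|^{1/2}\lesssim|x|^{-\frac{N-1}2}$ on $\{|x|>R/2\}$, whence $\|w\|_{L^m}\lesssim R^{-(\frac{N-1}2-\frac Nm)}$ for every $m>\frac{2N}{N-1}$; interpolating with the merely bounded $L^2$ norm, $\|w\|_{L^\rho}\lesssim R^{-\kappa(\rho)}$ with $\kappa(\rho)>0$ for every $\rho\in(2,\frac{2N}{N-4})$. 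Choosing $\rho\in(2,\frac{2N}{N-2s_c})$ one runs the Gagliardo--Nirenberg inequality $\|w\|_{\dot H^{s_c}}\lesssim\|w\|_{L^\rho}^{1-\theta}\|\Delta w\|^{\theta}$, legitimate because then $\frac N2-\frac N\rho<s_c<2$, to obtain $\|w\|_{\dot H^{s_c}}\lesssim R^{-\kappa'}$ for some $\kappa'>0$. Combining the two pieces,
$$\|e^{i(\cdot-T)\Delta^2}u(T)\|_{L^a((T,\infty),L^r)}\lesssim\varepsilon^{1-\frac{s_c}2}+R^{-\kappa'};$$
taking $R:=\varepsilon^{-1}$ — which is admissible precisely because Lemma \ref{evac} was applied with $R$ fixed first — gives the statement with $\mu:=\min\{1-\tfrac{s_c}2,\,\kappa'\}$.

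The genuine obstacle is thus the exterior estimate: since $(1-\psi_R)u(T)$ retains a fixed amount of mass, its smallness can only be extracted as spatial decay in a high Lebesgue norm, which then has to be transferred, through a tight interpolation, down to the Sobolev level $s_c$ dictated by the non-endpoint Strichartz bound; keeping all the relevant exponents — $\tfrac{2N}{N-1}$ (integrability of the Strauss decay), $\tfrac{2N}{N-4}$ (the $H^2$ Sobolev endpoint) and $\tfrac{2N}{N-2s_c}$ (the Gagliardo--Nirenberg range) — simultaneously compatible, together with the exponent constraints inherited from the Morawetz estimate of Lemma \ref{bnd}, is what underlies the technical hypothesis $\frac{24+\alpha}5<N$. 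A minor point to verify along the way is the availability of the $\dot H^{s_c}$-data Strichartz estimate for $e^{it\Delta^2}$, which follows from its dispersive bound and the fractional integration of Lemma \ref{Hardy-Littlewwod-Sobolev}, exactly as in the classical second-order case.
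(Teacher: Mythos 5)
Your reduction of the proposition to ``$\|u(T)\|_{\dot H^{s_c}}$ is small'' cannot work, and the step you use to get it is false. The interpolation you invoke for the exterior piece, $\|w\|_{\dot H^{s_c}}\lesssim\|w\|_{L^\rho}^{1-\theta}\|\Delta w\|^{\theta}$ with $\rho>2$ and the scaling-consistent $\theta$, is not a valid Gagliardo--Nirenberg inequality: the scaling relation forces $\theta<\tfrac{s_c}{2}$, violating the structural constraint $\theta\geq\tfrac{s_c}{2}$, and one cannot place $\dot H^{s_c}$ between $L^\rho$ ($\rho>2$) and $\dot H^2$. A concrete (radial) counterexample is a unit-frequency oscillation of small amplitude spread over a huge annulus, e.g. $w(x)\sim L^{-N/2}\sin(|x|)\,\chi(|x|/L)$ with $L\gg1$: it has $\|w\|\sim\|w\|_{\dot H^{s_c}}\sim\|\Delta w\|\sim1$ while $\|w\|_{L^\rho}\sim L^{-N(\frac12-\frac1\rho)}\to0$. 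This is exactly the kind of profile the tail $(1-\psi_R)u(T)$ can have, so smallness in $L^\rho$ plus an $H^2$ bound does not give smallness in $\dot H^{s_c}$. The failure is not repairable by a cleverer decomposition: if the solution scatters to a nonzero state $\psi$, then $\|u(T)\|_{\dot H^{s_c}}\to\|\psi\|_{\dot H^{s_c}}>0$, so the quantity you are trying to make small is generically bounded below; the smallness of $\|e^{i(\cdot-T)\Delta^2}u(T)\|_{L^a((T,\infty),L^r)}$ comes from time dispersion of the free flow on the interval $(T,\infty)$, not from smallness of a critical norm of the data at time $T$. (Your preliminary observation that $(a,r)$ has $\dot H^{s_c}$-critical scaling, via the admissible pair $b$ with $\tfrac1b=\tfrac1r+\tfrac{s_c}{N}$, is correct, but by itself it only shows the relevant norm is finite, not small.)

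The paper's proof proceeds differently, in the Dodson--Murphy fashion: write $e^{i(\cdot-T)\Delta^2}u(T)$ through the Duhamel formula from time $0$ and split the integral at $T-\varepsilon^{-\beta}$, giving the free evolution of $u_0$ plus a distant-past term $F_1$ and a recent-past term $F_2$. The linear term has small Strichartz tail as $T\to\infty$; $F_2$ is controlled by Strichartz together with smallness of $\|u\|_{L^\infty_tL^r_x}$ on $[T-\varepsilon^{-\beta},T]$, obtained from the energy evacuation Lemma \ref{evac} combined with the almost-conservation of localized mass ($|\frac{d}{dt}\int\psi_R|u|^2dx|\lesssim R^{-1}$) for the interior part and the radial Strauss decay for the exterior part; and $F_1$ is controlled by interpolating with the $L^{1}\to L^{\infty}$ dispersive bound $t^{-N/4}$ of $e^{it\Delta^2}$, which is where the hypothesis $\tfrac N4>1+\tfrac1a$, i.e. $N>\tfrac{24+\alpha}{5}$, actually enters (not through exponent bookkeeping in a Gagliardo--Nirenberg step, as you conjectured). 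You would need to adopt this time-splitting and dispersive-decay mechanism; the spatial $\dot H^{s_c}$ decomposition of $u(T)$ alone cannot produce the required smallness.
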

\begin{proof}
Let $\beta>0$ and $T>\varepsilon^{-\beta}>0$. By the integral formula
\begin{eqnarray*}
e^{i(\cdot-T)\Delta^2}u(T)
&=&e^{i\cdot\Delta^2}u_0+i\int_0^Te^{i(\cdot-s)\Delta^2}[(I_\alpha*|u|^p)|u|^{p-2}u]\,ds\\
&=&e^{i\cdot\Delta^2}u_0+i\Big(\int_0^{T-\varepsilon^{-\beta}}+\int_{T-\varepsilon^{-\beta}}^T\Big)e^{i(\cdot-s)\Delta^2}[(I_\alpha*|u|^p)|u|^{p-2}u]\,ds\\
&:=&e^{i\cdot\Delta^2}u_0+F_1+F_2.
\end{eqnarray*}
$\bullet$ The linear term. Take the real number $\frac1b:=\frac1r+\frac{s_c}N$. Since $(a,b)\in\Gamma$, by Strichartz estimate and Sobolev injections, one has
\begin{eqnarray*}
\|e^{i\cdot\Delta^2}u_0\|_{L^a((T,\infty),L^r)}
&\lesssim&\||\nabla|^{s_c}e^{i\cdot\Delta^2}u_0\|_{L^a((T,\infty),L^b)}\lesssim\|u_0\|_{H^2}.
\end{eqnarray*}
$\bullet$ The term $F_2$. By Strichartz estimate
\begin{eqnarray*}
\|F_2\|_{L^a((T,\infty),L^r)}
&\lesssim&\|(I_\alpha*|u|^p)|u|^{p-1}\|_{L^{m'}((T-\varepsilon^{-\beta},T),L^{r'})}\\
&\lesssim&\|u\|_{L^a((T-\varepsilon^{-\beta},T),L^{r})}^{2p-1}\\
&\lesssim&\varepsilon^{-\frac{(2p-1)\beta} a}\|u\|_{L^\infty((T-\varepsilon^{-\beta},T),L^{r})}^{2p-1}.
\end{eqnarray*}
Now, by Lemma \ref{evac}, one has
$$\int_{\R^N}\psi_R(x)|u(T,x)|^2\,dx<\epsilon^2.$$
Moreover, a computation with use of \eqref{S} and the properties of $\psi$ give
$$|\frac d{dt}\int_{\R^N}\psi_R(x)|u(t,x)|^2\,dx|\lesssim R^{-1}.$$
Then, for any $T-\varepsilon^{-\beta}\leq t\leq T$ and $R>\varepsilon^{-2-\beta}$, yields
$$\|\psi_Ru(t)\|\leq\Big( \int_{\R^N}\psi_R(x)|u(T,x)|^2\,dx+C\frac{T-t}R\Big)^\frac12\leq C\varepsilon.$$
This gives, for $R>\varepsilon^{-\frac{N(N-4)(p^*-p)}{4B(N-1)}}$,
\begin{eqnarray*}
\|u\|_{L^\infty((T,\infty),L^r)}
&\leq&\|\psi_Ru\|_{L^\infty((T,\infty),L^r)}+\|(1-\psi_R)u\|_{L^\infty((T,\infty),L^r)}\\
&\lesssim&\|\psi_Ru\|_{L^\infty((T,\infty),L^2)}^{\frac{N-4}{4p}(p^*-p)}\|\psi_Ru\|_{L^\infty((T,\infty),L^\frac{2N}{N-4})}^{1-\frac{N-4}{4p}(p^*-p)}\\
&+&\|(1-\psi_R)u\|_{L^\infty((T,\infty),L^\infty)}^{\frac{2B}{Np}}\|(1-\psi_R)u\|_{L^\infty((T,\infty),L^2)}^{1-\frac{2B}{Np}}\\
&\lesssim&\varepsilon^{\frac{N-4}{4p}(p^*-p)}+R^{-\frac{B(N-1)}{Np}}\\
&\lesssim&\varepsilon^{\frac{N-4}{4p}(p^*-p)}.
\end{eqnarray*}
So,
\begin{eqnarray*}
\|F_2\|_{L^a((T,\infty),L^r)}
&\lesssim&\varepsilon^{-\frac{(2p-1)\beta} a}\|u\|_{L^\infty((T-\varepsilon^{-\beta},T),L^{r})}^{2p-1}\\
&\lesssim&\varepsilon^{-\frac{(2p-1)\beta} a}\varepsilon^{\frac{N-4}{4p}(p^*-p)(2p-1)}\\
&\lesssim&\varepsilon^{-\frac{2p-1}{4p}((N-4)(p^*-p)+\beta\frac{4p}a)}.
\end{eqnarray*}
$\bullet$ The term $F_1$. Take $\frac1r=\frac\lambda b$. By interpolation
\begin{eqnarray*}
\|F_1\|_{L^a((T,\infty),L^r)}
&\lesssim&\|F_1\|_{L^a((T,\infty),L^b)}^\lambda\|F_1\|_{L^a((T,\infty),L^\infty)}^{1-\lambda}\\
&\lesssim&\|e^{i(\cdot-(T-\varepsilon^{-\beta}))\Delta^2}u(T-\varepsilon^{-\beta})-e^{i\cdot\Delta^2}u_0\|_{L^a((T,\infty),L^b)}^\lambda\|F_1\|_{L^a((T,\infty),L^\infty)}^{1-\lambda}\\
&\lesssim&\|F_1\|_{L^a((T,\infty),L^\infty)}^{1-\lambda}.
\end{eqnarray*}
With the free Schr\"odinger operator decay
$$\|e^{it\Delta^2}\cdot\|_r\leq\frac C{t^{\frac N2(\frac12-\frac1r)}}\|\cdot\|_{r'}, \quad \forall r\geq2,$$
 for $T\leq t$, and 
$$2\leq d:=\frac{2p-1}{1+\frac\alpha N}\leq\frac{2N}{N-4},$$%\quad N(\frac12-\frac1r)<2,$$
one gets
\begin{eqnarray*}
\|F_1\|_{\infty}
&\lesssim&\int_0^{T-\varepsilon^{-\beta}}\frac1{(t-s)^{\frac N4}}\|(I_\alpha*|u|^p)|u|^{p-2}u\|_1\,ds\\
&\lesssim&\int_0^{T-\varepsilon^{-\beta}}\frac1{(t-s)^{\frac N4}}\|u(s)\|_{d}^{2p-1}\,ds\\
&\lesssim&(t-T+\varepsilon^{-\beta})^{1-\frac N4}.
\end{eqnarray*}
Thus, if $\frac N4>1+\frac1a$, it follows that
\begin{eqnarray*}
\|F_1\|_{L^a((T,\infty),L^r)}
&\lesssim&\|F_1\|_{L^a((T,\infty),L^\infty)}^{1-\lambda}\\
&\lesssim&\Big(\int_T^\infty(t-T+\varepsilon^{-\beta})^{a[1-\frac N4]}\,dt\Big)^{\frac{1-\lambda}a}\\
&\lesssim&\varepsilon^{(1-\lambda)\beta[\frac N4-1-\frac1a]}.
\end{eqnarray*}
Since the above condition is satisfied for $N>\frac{24+\alpha}N$, one concludes the proof by collecting the previous estimates.
\end{proof}
%%%%%%%%%%%%%%%%%%%%%%%%%%%%%%%%%%%%%%%%%%%%%%%%%%%%%%%%%%%%%%%%%%%%%%%%%%%%%%%%%%%%%%%%%%%%%%%%%%%%%%%%%%%%%%%%%%%%%%%%%%%
\begin{proof}[Proof of Theorem \ref{sctr2}]
%%%%%%%%%%%%%%%%%%%%%%%%%%%%%%%%%%%%%%%%%%%%%%%%%%%%%%%%%%%%%%%%%%%%%%%%%%%%%%%%%%%%%%%%%%%%%%%%%%%%%%%%%%%%%%%%%%%%%%%%%%%%
The scattering of energy global solutions to the focusing problem \eqref{S} follows with Proposition \ref{fn} via Lemmas \ref{sdt}.
\end{proof}
%%%%%%%%%%%%%%%%%%%%%%%%%%%%%%%%%%%%%%%%%%%%%%%%%%%%%%%%%%%%%%%%%%%%%%%%%%%%%%%%%%%%%%%%%%%%%%%%%%%%%%%%%%%%%%%%%%%%%%%%%%%%

%\end{thebibliography}


\begin{thebibliography}{99}
%@@@@@@@@@@@@@@@@@@@@@@@@@@@@@@@@@@@@%@@@@@@@@@@@@@@@@@@@@@@@@@@@@@@@@@@@@%@@@@@@@@@@@@@@%@@@@@@@@@@@@@@@@@@@@@@@@@@@@@@@@@@@@%@@@@@@@@@@@@@@@@@@@@@@@@@@@@@@@@@@@@%@@@@@@@@@@@@@@
%\bibitem{saa}{\bf M. G. Alharbi and T. Saanouni}, {\em Sharp threshold of global well-posedness vs finite time blow-up for a class of inhomogeneous Schr\"odinger equations}, J. Math. Phys. 60, 081514 (2019).
%\bibitem{Adams}{\bf R. Adams}, {\em Sobolev spaces}, Academic. New York, (1975).
\bibitem{cb}{\bf C. Banquet}, {\em On the management fourth-order Schr\"odinger-Hartree equation}, arXiv:1905.08159 [math.AP].
%\bibitem{bhl}{\bf T. Boulenger, D. Himmelsbach and E. Lenzmann}, {\em Blow-up for fractional NLS}, Journal of Functional Analysis, 271 (2016), 2569-2603.
%\bibitem{bags} {\bf C. Bonanno, P. d'Avenia, M. Ghimenti and M. Squassina}, {\em Soliton dynamics for the generalized Schr\"odinger equation}, J. Math. Anal. Appl., 417 (2014), 180-199.

\bibitem{cd}{\bf D. Cao and W. Dai}, {\em Classification of nonnegative solutions to a bi-harmonic equation with Hartree type nonlinearity}, Proceed. Roy. Soc. Edinburgh Sec. A : Math. 149, no. 4 (2019), 979-994.
%\bibitem{cmz}{\bf Carlos M. Guzmán}, {\em On well posedness for the inhomogeneous non-linear Schr\"odinger equation},  non-linear Analysis: Theory, Methods and Applications, 37, 249-286, (2019).
%\bibitem{cg}{\bf  J. Chen and B. Guo}, {\em Strong instability of standing waves for a nonlocal Schr\"odinger equation}, Physica D: non-linear Phenomena, 227 (2007), 142-148.
%\bibitem{co}{\bf Y. Cho and T. Ozawa}, {\em Sobolev inequalities with symmetry}, Commun. Contemp. Math. 11 (3)(2009) 355-365.%, MR 2538202 (2010h:46039).
%\bibitem{cow}{\bf Y. Cho, T. Ozawa and C. Wang}, {\em Finite time blowup for the fourth-order NLS}, Bull. Korean Math. Soc. 53, no. 2 (2016), 615-640.
%\bibitem{chk}{\bf  Y. Cho, G. Hwang, S. Kwon, and S. Lee}, {\em On the finite time blowup for mass-critical Hartree equations}, P. Roy. Sco. Edingb. A, 145, (3) (2015), 467-479.
%\bibitem{chho}{\bf  Y. Cho, G. Hwang, H. Hajaiej, and T. Ozawa}, {\em On the Cauchy problem of fractional Schr\"odinger equation with Hartree type non-linearity}, Funkcialaj Ekvacioj,  56 (2012), 193-224.

%\bibitem{cw}{\bf M. Christ and M. Weinstein}, {\em Dispersion of small amplitude solutions of the generalized Korteweg-de Vries equation}, J.
%Funct. Anal. 100 (1991), 87–109.

%\bibitem{avenia}{\bf  P. D’avenia, G. Siciliano and M. Squassina}, {\em On fractional Schr\"odinger equations}. Mathematical Models and Methods in Applied Sciences. 25, (8) (2015) 1447–1476.
%\bibitem{ak} {\bf N. Akhmediev and A. Ankiewicz}, {\em Partially coherent solitons on a finite background}. Phys. Rev. Lett. 82, 2661, (1999).

%\bibitem{eg} {\bf P. \'Alvarez-Caudevilla, E. Colorado and V. A. Galaktionov}, {\em Existence of solutions for a system of coupled non-linear stationary bi-harmonic Schr\"odinger equations}, non-linear Analysis: Real World Applications. 23, 78-93, (2015).


%\bibitem{Cas}{\bf T. Cazenave}: {\it An introduction to non-linear Schr\"odinger equations}, Textos de Metodos Matematicos {26}, Instituto de Matematica UFRJ, (1996).
%\bibitem{Bouard} {\bf A. De Bouard and F. Fukuizumi}, {\em Stability of standing waves for non-linear Schr\"odinger equations with inhomogeneous non-linearities}, Ann. Henri Poincar\'e, 6, (2005), 1157-1177.
\bibitem{vdd}{\bf V. D. Dinh}, {\em Dynamics of radial solutions for the focusing fourth-order nonlinear Schr\"odinger equations}, arXiv:2001.03022v2 [math.AP] 2020.
%\bibitem{vdd2}{\bf V. D. Dinh}, {\em Blow-up of $H^2$ solutions for a class of the focusing inhomogeneous non-linear Schr\"odinger equation}, non-linear Analysis: Theory, Methods and Applications, 174, (2018), 169-188.
%\bibitem{dd}{\bf V. D. Dinh}, {\em On Blow-up solutions to the focusing mass-critical non-linear fractional Schr\"odinger equation}, Comm. Pur. Appl. Anal, AIMS American Institute of Mathematical Sciences, 18 (2) (2018), 689-708. ffhal-01692254f
\bibitem{dm}{\bf B. Dodson and J. Murphy}, {\em A new proof of scattering below the ground state for the $3D$ radial focusing cubic NLS},
Proc. Amer. Math. Soc. 145, no. 11 (2017), 4859-4867.
%\bibitem{es}{\bf A. Elgart, and B. Schlein}, {\em Mean field dynamics of boson stars}. Commun. Pure Appl. Math., 60 (2007), 500-545.

%\bibitem{lgf}{\bf L. G. Farah}, {\em Global well-posedness and blow-up on the energy space for the inhomogeneous non-linear Schr\"odinger equation}, J. Evol. Equ. 16, 1, (2016), 193-208.
%\bibitem{f1}{\bf B. Feng}, {\em On the blow-up solutions for the nonlinear Schr\"odinger equation with combined power-type nonlinearities}. J. Evol. Equ. 18 (2018), no. 1, 203-220.
%\bibitem{f2}{\bf B. Feng}, {\em On the blow-up solutions for the fractional nonlinear Schr\"odinger equation with
%combined power-type nonlinearities}. Commun. Pure Appl. Anal. 17 (2018), no. 5, 1785-1804.
%\bibitem{fy}{\bf B. Feng and X. Yuan}, {\em On the Cauchy problem for the Schr\"odinger-Hartree equation}, Evolution Equations and Control Theory 4(4) (2015), 431-445.
%\bibitem{fl}{\bf J. Fr\"ohlich and E. Lenzmann}, {\em Mean-field limit of quantum Bose gases and nonlinear Hartree equation}, S\'eminaire:
 %Equations aux D\'erivées Partielles 2003–2004, S\'emin. \'Equ. D\'eriv. Partielles (Ecole Polytech., Palaiseau, 2004), Exp. No. XIX, p. 26.
%\bibitem{gv}{\bf H. Genev and G. Venkov}, {\em Soliton and blow-up solutions to the time-dependent Schr\"odinger Hartree equation}, Discrete Contin. Dyn. Syst. Ser. S, 5 (2012), 903-923.
%\bibitem{Gill}{\bf T. S. Gill}, {\em Optical guiding of laser beam in nonuniform plasma}. Pramana Journal of Physics 55, 845-852, (2000).
%\bibitem{pg}{\bf E. P. Gross and E. Meeron}, {\em Physics of Many-Particle Systems}, vol. 1, Gordon Breach, New York, 231-406, (1966).
\bibitem{guo}{\bf Q. Guo}, {\em Scattering for the focusing $L^2$-supercritical and $H^2$-subcritical bi-harmonic NLS equations}. Comm. Part. Diff. Equ. 41, no. 2 (2016), 185-207.
%\bibitem{gw}{\bf Z. Guo and Y. Wang}, {\em Improved Strichartz estimates for a class of dispersive equations in the radial case and
%their applications to non-linear Schr\"odinger and wave equations}, J. Anal. Math. 124, (1) (2014), 1–38. 

%\bibitem{gp}{\bf C. M. Guzman and A. Pastor}, {\em On the inhomogeneous bi-harmonic nonlinear schr\"odinger equation: local, global and stability results}, Nonl. Anal.: Real World App. 56 (2020), 103174.
\bibitem{Karpman}{\bf V. I. Karpman}, {\em Stabilization of soliton instabilities by higher-order dispersion: fourth-order nonlinear Schr\"odinger equation}, Phys. Rev. E. 53, no. 2 (1996), 1336-1339.

\bibitem{Karpman 1}{\bf V. I. Karpman and A. G. Shagalov}, {\em Stability of soliton described by nonlinear Schr\"odinger type equations with higher-order dispersion}, Phys D. 144, (2000), 194-210.
%\bibitem{gz}{\bf Z. Guo and S. Zhu}, {\em Sharp threshold of blow-up and scattering for the fractional Hartree equation}, arXiv:1705.08615v3 [math.AP] (2017).
%\bibitem{Hasegawa}{\bf A. Hasegawa and F. Tappert}, {\em Transmission of stationary non-linear optical pulses in dispersive dielectric fibers II}. Normal dispersion, Appl. Phys. Lett. 23, 171-172, (1973). 
%\bibitem{thsk}{\bf T. Hmidi and S. Keraani}, {\em Blow-up theory for the critical non-linear Schr\"odinger equations revisited}, International Mathematics Research Notices, 46 (2005), 2815-2828.
%\bibitem{Holmer} {\bf Justin Holmer and Svetlana Roudenko}, {\em On blow-up solutions to the 3D cubic non-linear Schr\"odinger equation}, Appl. Math. Res. Express. 31, no. 1, (2007).
\bibitem{km}{\bf C. Kenig and F. Merle}, {\em Global well-posedness, scattering and blow-up for the energy-critical focusing non-linear wave equation}. Acta Math. 201, no. 2 (2008), 147-212.

\bibitem{Holmer}{\bf J. Holmer and S. Roudenko}, {\em A sharp condition for scattering of the radial 3D cubic non-linear Schr\"odinger equations}, Commun. Math. Phys. 282, (2008), 435-467.

%\bibitem{lenz}{\bf  E. Lenzmann}, {\em Well-posedness for semi-relativistic Hartree equations of critical type}, Math. Phys. Anal. Geom., 10 (2007), 43-64.

%\bibitem{sl}{\bf S. Le Coz}: {\it A note on Berestycki-Cazenave classical instability result for non-linear
%Schr\"odinger equations}, Adv. non-linear Stud. 8, (3) (2008), 455-463.
%\bibitem{lr}{\bf M. Lewin and N. Rougerie}, {\em  Derivation of Pekar’s polarons from a microscopic model of quantum crystal}, SIAM J. Math. Anal., 45 (2013), 1267–1301.
\bibitem{sw}{\bf S. Levandosky and W. Strauss}, {\em Time decay for the nonlinear Beam equation}, Meth. Appl. Anal. 7 (2000), 479-488.
\bibitem{el}{\bf E. Lieb}, {\em Analysis, 2nd ed., Graduate Studies in Mathematics}, Vol. 14, American Mathematical Society, Providence, RI, 2001. 
%\bibitem{pll}{\bf P.-L. Lions}, {\em Sym\'etrie et compacit\'e dans les espaces de Sobolev}, J. Funct. Anal. 49, no. 3 (1982), 315-334.
%\bibitem{ll}{\bf  P.-L. Lions}, {\em The Schr\"odinger equation and related questions}, non-linear Anal., 4 (1980), 1063-1072.
%\bibitem{Liu}{\bf C. S. Liu and V. K. Tripathi}, {\em Laser guiding in an axially nonuniform plasma channel}, Phys. Plasmas 1, (1994), 3100-3103.
\bibitem{mwz}{\bf C. Miao, H. Wu and J. Zhang}, Scattering theory below energy for the cubic fourth-order Schr\"odinger equation, Mathematische Nachrichten. 288, no. 7 (2015) 798-823.
%\bibitem{mvs}{\bf V. Moroz and J. V. Schaftingen}, {\em Groundstates of non-linear Schr\"odinger equations: Existence, qualitative properties and decay asymptotics}, J. Funct. Anal. 265 (2013), 153-184.

%\bibitem{Mo}{\bf J. Moser}, {\it A sharp form of an inequality of N. Trudinger}, Ind. Univ. Math. J. 20, 1077-1092, (1971).

%\bibitem{nir}{\bf L. Nirenberg}, {\it On elliptic partial differential equations}, Ann. Scuola Norm. Sup. Pisa Cl. Sci. 13, 116–162, (1955).
%%%%\bibitem{Miao 2}{\bf C. Miao, H. Wu and J. Zhang}, {\em Scattering theory below energy for the cubic fourth-order Schr\"odinger equation}, published online in J. Math. Nachr. doi: 10.1002.mana.201400012.
%\bibitem{fukuizumi} {\bf R. Fukuizumi and M. Ohta}, {\em Instability of standing waves for non-linear Schr\"odinger equations with inhomogeneous non-linearities}, J. Math. Kyoto Univ. 45, (2005), 145-852.
\bibitem{bp}{\bf B. Pausader}, {\em Global well-posedness for energy critical fourth-order Schr\"odinger equations in the radial case}, Dyn. Partial Differ. Equ. 4, no. 3 (2007), 197-225.
\bibitem{st0}{\bf T. Saanouni}, {\em Non-linear bi-harmonic Choquard equations}, Comm. Pur. Appl. Anal. doi:10.3934/cpaa.2020221.

\bibitem{st}{\bf T. Saanouni}, {\em A note on the fractional Schr\"odinger equation of Choquard type}, J. Math. Anal. Appl. 470 (2019), 1004-1029.
\bibitem{st1}{\bf T. Saanouni}, {\em Decay of solutions to a fourth-order nonlinear Schr\"odinger equation}, Analysis, 37, no. 1 (2016), 47-54.
%\bibitem{st2}{\bf T. Saanouni}, {\em A note on the inhomogeneous fourth-order Schr\"odinger equation}, submitted.
%\bibitem{hs}{\bf H. Spohn}, {\em On the Vlasov hierarchy}, Math. Method Appl. Sci. 3 (1981), 445-455.
%\bibitem{tv}{\bf M. Tarulli and G. Venkov}, {\em Decay and scattering in the energy space for the solution of weakly coupled Schr\"odinger and Hartree-Kock equations}, arXiv:1904.10364v4 [math.AP].% 30 Jul 2019.
\bibitem{NV}{\bf N. Visciglia}, {\it On the decay of solutions to a class of defocusing NLS}, Math. Res. Lett. Vol. 16, no. 5 (2009), 919-926.
%\bibitem{sz}{\bf S. Zhu}, {\em On the blow-up solutions for the nonlinear fractional Schr\"odinger equation}. J. Diff. Eq. 261 (2016), no. 2, 1506-1531.
%%%\bibitem{BP1}{\bf B. Pausader}: {\em Scattering and the Levandosky-Strauss conjecture for fourth-order non-linear wave equations}, J. Differential Equations. {Vol. 241}, 237-278, (2007).

%\bibitem{py}{\bf L. E. Payne and D. H. Sattinger}, {\em Saddle points and instability of non-linear hyperbolic equations}, Isr J. Math. 22, 273-303, (1976).
%\bibitem{rp}{\bf R. Penrose}, {\em Quantum computation, entanglement and state reduction}, Phil. Trans. R. Soc., 356, 1927–1939, (1998).
%\bibitem{Ru}{\bf B. Ruf}: {\it A sharp Moser-Trudinger type inequality for unbounded domains in ${\mathbb{R}}^{2}$}, J. Funct. Anal,  219, 340-367, (2004).
%\bibitem{Tr}{\bf N. S. Trudinger}, {\it On imbedding into Orlicz spaces and some applications}, J. Math. Mech. 17, 473-484, (1967).

%@@@@@@@@@@@@@@@@@@@@@@@@@@@@@@@@@@@@%@@@@@@@@@@@@@@@@@@@@@@@@@@@@@@@@@@@@%@@@@@@@@@@@@@@%@@@@@@@@@@@@@@@@@@@@@@@@@@@@@@@@@@@@%@@@@@@@@@@@@@@@@@@@@@@@@@@@@@@@@@@@@%@@@@@@@@@@@@@@


\end{thebibliography}
\end{document}